
\documentclass[preprint,12pt,number]{elsarticle}
\usepackage{multirow}
\usepackage{subcaption}
\usepackage[normalem]{ulem} 
\usepackage{xcolor}
\usepackage{amssymb}
\usepackage{mydef}
\usepackage[colorlinks=true, linkcolor=blue, citecolor=blue, urlcolor=blue]{hyperref}
\newtheorem{proposition}{Proposition}
\newproof{proof}{Proof}
\newdefinition{remark}{Remark}



\usepackage{amssymb}
\usepackage{amsmath}


\usepackage{comment}

\journal{Journal of Computational Physics}

\begin{document}

\begin{frontmatter}



\title{An anisotropic nonlinear stabilization for finite element approximation of Vlasov-Poisson equations} 


 \author[1]{Junjie Wen\corref{cor1}}
 \ead{junjie.wen@it.uu.se}

\author[1]{Murtazo Nazarov}
\ead{murtazo.nazarov@uu.se}

 \cortext[cor1]{Corresponding author}
\address[1]{Department of Information Technology, Division of Scientific Computing, Uppsala University, Sweden}

\begin{abstract}
We introduce a high-order finite element method for approximating the Vlasov-Poisson equations. This approach employs continuous Lagrange polynomials in space and explicit Runge-Kutta schemes for time discretization. To stabilize the numerical oscillations inherent in the scheme, a new anisotropic nonlinear artificial viscosity method is introduced. Numerical results demonstrate that this method achieves optimal convergence order with respect to both the polynomial space and time integration. The method is validated using classic benchmark problems for the Vlasov-Poisson equations, including Landau damping, two-stream instability, and bump-on-tail instability in a two-dimensional phase space.
\end{abstract}



\begin{keyword}
Vlasov-Poisson, stabilized finite element method, artificial viscosity, residual-based shock capturing, high-order method
\end{keyword}

\end{frontmatter}



\section{Introduction}
\label{sec1}
Plasma, one of the fundamental states of matter, is a crucial element in many academic and industrial fields. However, it is usually difficult to predict the motion of the plasma in practical research; this difficulty is mainly caused by the fluid nature of plasma and its charged state. The simulation of fluid plasma can be considered as a computational fluid dynamics (CFD) problem, which is known to be complicated. In addition, modeling plasma becomes even more challenging when the effects of electromagnetic fields are taken into consideration. To simulate plasma, the kinetic model is widely applied, where the distribution of charged particles in the phase space is quantified by a distribution function $f(\pmb{x}, \pmb{v}, t)$. This distribution function is determined by the position $\pmb{x}$, the velocity $\pmb{v}$, and the time $t$. The time evolution of $f(\pmb{x}, \pmb{v}, t)$ under self-consistent electromagnetic fields is described by the Vlasov equation. To estimate the electromagnetic fields in the system, one approach is disregarding the magnetic field $\pmb{B}$ and computing the electric field $\pmb{E}$ by solving the Poisson equation, which gives the name Vlasov-Poisson to the system.

The history of numerical analysis for Vlasov-Poisson equations dates back to the 1960s, see \cite{TAYLOR1967155}. In general, the numerical schemes for PDEs that describe particle propagation such as Vlasov-Poisson equations can be classified into particle-based and grid-based methods. The Particle-in-Cell (PIC) method is one of the most popular particle-based methods. Many attempts have been made to apply the PIC method to Vlasov-Poisson equations, and some of them have achieved satisfactory results, see \cite{MR1119268,DEGOND20105630,MR2818633,MR3485969,MR3421063,MR4456467} for instance. The PIC method makes use of so-called super nodes, and these super nodes are used to approximate the distribution of particles. The trajectories of these super nodes in the phase space are governed by the Vlasov equation, and the electric field is computed on a grid of physical space. The advantage that drives the application of the PIC method is its relatively low computational cost compared to other numerical schemes, especially when it is applied to high-dimensional problems including Vlasov-Poisson. Nevertheless, the drawback of the PIC method is that applying super nodes can introduce a numerical error, and this error is decreased by $1/\sqrt{N}$ when the number of super nodes increases by $N$, according to 
the study \citep{MR4456467} and references therein.

Unlike particle-based methods, which track the trajectories of the super nodes, grid-based methods operate on a grid of the phase space and discretize the PDE by evaluating the function values at grid nodes. Popular grid-based methods for Vlasov-Poisson equations include finite volume (FV) and semi-Lagrangian schemes. The FV scheme was proven feasible for Vlasov-Poisson equations in \cite{MR1870837}, where a first-order Vlasov-Poisson approximation is introduced. Subsequently, researchers have attempted to achieve higher-order and stabilized FV approximations with proper reconstruction algorithms and stabilization techniques; relevant studies can be found in \cite{FILBET2001166,5518448,MR3854174}. The semi-Lagrangian method moves function data on the grid of phase space based on the characteristics of the Vlasov equation. This approach is unconditionally stable and allows large time steps, as introduced in \cite{MR1672731}. The semi-Lagrangian method is often used in combination with other numerical frameworks, such as discontinuous Galerkin (DG), for instance \cite{MR2843721,MR2806222}. In addition to the approaches mentioned above, some Vlasov-Poisson solvers employing other numerical schemes such as finite difference (FD) and spectral methods have also been proposed, see \cite{KLIMAS1987202,MR1259903,MR1977366} and references therein for instance.\par

The finite element (FE) scheme is a robust numerical method that divides the computation domain into a finite number of subdomains and approximates unknown functions within each subdomain using a set of basis functions. This method is known to be able to handle complex geometries and boundary conditions, and can achieve high accuracy through mesh refinement. The application of the continuous FE method to Vlasov-Poisson equations has not been extensively studied yet. A FE approximation of the Vlasov-Poisson equation was published in 1988; however, there has been a notable lack of subsequent research building on this foundation in recent decades. The authors of the 1988 paper verified some properties of the FE approximations without exploring the high accuracy potential of the FE method, see more details in \cite{ZAKI1988184, ZAKI1988200}. Additionally, it is well-known that the FE method introduces numerical oscillations when it is applied to conservation laws, including Vlasov-Poisson equations. Notably, this issue was not addressed in that paper. 

The residual-based artificial viscosity method serves as an effective stabilization technique for FE approximations. This method enhances stability by incorporating a viscosity term based on the PDE residual. It was proved in \cite{MR3011445} that the FE approximation of scalar conservation laws converges to the unique entropy solution when using the residual-based artificial viscosity. The FE method with the residual-based artificial viscosity has been applied to many models of CFD and computational plasma, such as Navier–Stokes equations and Magnetohydrodynamics, e.g., \cite{MR4456197, MR4496880}.\par

In this article, we introduce a continuous FE approximation of the Vlasov-Poisson equation; this approximation utilizes high-order Lagrange polynomials in space and is explicit in time. The oscillations in the approximation are addressed by the residual-based artificial viscosity scheme and the numerical results prove that this approach can stabilize the FE approximation without degrading its accuracy. The Poisson equation is solved on a grid of the physical space using the continuous FE method. We implement our methods on the DOLFINx platform \cite{barrata2023dolfinx}, an open-source FE library that includes parallelization packages to speed up the calculations. We would like to clarify that the implementation of the methods to high-dimensional Vlasov-Poisson is not included in the article.

This article is organized as follows: In Section \ref{sec:prelim}, all the equations in the system are introduced together with some conservation properties of Vlasov-Poisson equations. In Section \ref{sec:FEA}, important settings for the FE schemes such as the meshes, the elements, and the function spaces are defined. In Section \ref{sec:RVM}, we introduce a nonlinear finite element framework for Vlasov-Poisson equations using a residual-based artificial viscosity method. 
The results of numerical experiments are shown in Section \ref{sec:experiment}. In addition, we apply our methods to the guiding-center model, which serves as a variant of Vlasov-Poisson, and the results are presented in \ref{Ap:GCM}.

\section{Preliminaries}\label{sec:prelim}
In this section, we introduce the equations that model the distribution and propagation of plasma. We also introduce some conservation properties of the Vlasov-Poisson equation.
\subsection{Governing equations}
Let $T>0$ be the final time, then the temporal domain be $\left[0,T\right]$. Consider two open polyhedral domains $\Omega_{\pmb{x}}\subset\mathbb{R}^d$, $\Omega_{\pmb{v}}\subset\mathbb{R}^d$, where $d$ is the dimension, $d = 1,2,3$. For all $(\pmb{x},\pmb{v},t)\in\Omega\times[0,T]$, where $\Omega:=\Omega_{\pmb{x}}\times\Omega_{\pmb{v}}$, we denote the distribution function as $f(\pmb{x},\pmb{v},t)$. The normalized Vlasov-Poisson equation with appropriate boundary conditions and initial data is given by
\begin{equation}
  \partial_t f+\pmb{\bbetaa}\cdot\nabla f = 0, \label{eq:vp}
\end{equation}
where the nonlinear vector field $\pmb{\bbetaa}$ and the operator $\nabla$ are defined as follows
\begin{equation}
  \pmb{\bbetaa}:=\begin{pmatrix}\pmb{v} \\ \pmb{E} \end{pmatrix},\qquad \nabla:=\begin{pmatrix}\nabla_{\pmb{x}} \\ \nabla_{\pmb{v}} \end{pmatrix}.\nonumber
\end{equation}
Let $\Phi(\pmb{x},t)$ be the electric potential of the system, which is associated with the electric field $\pmb{E}(\pmb{x},t)$, according to the Gauss's law:
\begin{equation}
  \pmb{E} = -\nabla_{\pmb{x}}\Phi.\label{eq:gauss}
\end{equation}
The function $\Phi$ can be computed by solving the following Poisson equation
\begin{equation}
    -\nabla^2_{\pmb{x}}\Phi=\rho-\rho_0,\label{eq:poiss}
\end{equation}
where $\rho(\pmb{x},t)$ is the charge density, which can be derived by the distribution function as follows
\begin{equation}
    \rho = \int_{\Omega_{\pmb{v}}}f\ {\rm d}\pmb{v},\label{eq:charge}
\end{equation}
and the constant $\rho_0$ is the average density such that
\begin{equation}
  \int_{\Omega_{\pmb{x}}}(\rho-\rho_0)\ {\rm d}\pmb{x}=0.\notag
\end{equation} 

\begin{proposition}
    [Conservation]
\label{prop:cons}
The following conservation properties hold for the solution of Vlasov-Poisson equations:
\begin{enumerate}
\item Conservation of mass:
$
\frac{\rm d}{{\rm d}t}\int_{\Omega}f\ {\rm d}\pmb{x}{\rm d}\pmb{v}=0.
$

\item Conservation of momentum:
$
\frac{\rm d}{{\rm d}t}\int_{\Omega}f\pmb{v}\ {\rm d}\pmb{x}{\rm d}\pmb{v}=0.
$

\item Conservation of energy:
$
\frac{\rm d}{{\rm d}t}\left(\frac{1}{2}\int_{\Omega}f \pmb{v}^2\ {\rm d}\pmb{x}{\rm d}\pmb{v}+\frac{1}{2}\int_{\Omega_{\pmb{x}}}\pmb{E}^2\ {\rm d}\pmb{x}\right)=0.
$

\item Conservation of square mass:
$
\frac{\rm d}{{\rm d}t}\int_{\Omega}f^2\ {\rm d}\pmb{x}{\rm d}\pmb{v}=0.
$
\end{enumerate}
\end{proposition}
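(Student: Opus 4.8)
The plan is to reduce all four identities to integration by parts over the phase-space domain $\Omega$, built on a single structural observation: the vector field $\pmb{\bbetaa}$ is divergence-free. First I would verify that $\nabla\cdot\pmb{\bbetaa}=\nabla_{\pmb{x}}\cdot\pmb{v}+\nabla_{\pmb{v}}\cdot\pmb{E}=0$, since $\pmb{v}$ carries no $\pmb{x}$-dependence and $\pmb{E}=\pmb{E}(\pmb{x},t)$ carries no $\pmb{v}$-dependence. This lets me recast \eqref{eq:vp} in the conservative form $\partial_t f+\nabla\cdot(\pmb{\bbetaa}f)=0$, which is the shape best suited to the manipulations below. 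Throughout I assume the boundary conditions are such that every surface integral that arises vanishes (periodicity in $\pmb{x}$, together with decay or compact support in $\pmb{v}$); this is precisely what makes the boundary terms disappear in each step.

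The conservation of mass (1) and of square mass (4) are the most direct. For (1) I would integrate $\partial_t f=-\nabla\cdot(\pmb{\bbetaa}f)$ over $\Omega$ and apply the divergence theorem. For (4) the key algebraic step is $2f\,\pmb{\bbetaa}\cdot\nabla f=\pmb{\bbetaa}\cdot\nabla(f^2)=\nabla\cdot(\pmb{\bbetaa}f^2)$, where the divergence-free property removes the spurious $(\nabla\cdot\pmb{\bbetaa})f^2$ term; integrating $\frac{\rm d}{{\rm d}t}\int_\Omega f^2=2\int_\Omega f\,\partial_t f$ and using the divergence theorem then finishes the argument.

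For momentum (2) I would test \eqref{eq:vp} with $\pmb{v}$. Integration by parts in $\pmb{x}$ annihilates the $\pmb{v}\cdot\nabla_{\pmb{x}}f$ contribution (no $\pmb{x}$-dependence in the weight), while integration by parts in $\pmb{v}$ converts the $\pmb{E}\cdot\nabla_{\pmb{v}}f$ term into $\int_\Omega f\pmb{E}\,{\rm d}\pmb{x}{\rm d}\pmb{v}=\int_{\Omega_{\pmb{x}}}\rho\pmb{E}\,{\rm d}\pmb{x}$ after integrating in $\pmb{v}$ and invoking \eqref{eq:charge}. It then remains to show this force integral vanishes: splitting off the constant $\rho_0$ leaves a gradient integral $\rho_0\int_{\Omega_{\pmb{x}}}\pmb{E}\,{\rm d}\pmb{x}$ (a boundary term), while writing $\rho-\rho_0=-\nabla^2_{\pmb{x}}\Phi$ and $\pmb{E}=-\nabla_{\pmb{x}}\Phi$ reduces the remaining piece, after one integration by parts, to $-\tfrac12\int_{\Omega_{\pmb{x}}}\nabla_{\pmb{x}}|\nabla_{\pmb{x}}\Phi|^2\,{\rm d}\pmb{x}$, again a pure boundary term.

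The energy identity (3) is the hard part, because it couples the kinetic and field contributions and forces cancellation rather than individual vanishing. Testing \eqref{eq:vp} with $\tfrac12|\pmb{v}|^2$ and integrating by parts in $\pmb{v}$ yields $\frac{\rm d}{{\rm d}t}\,\tfrac12\int_\Omega f|\pmb{v}|^2=\int_{\Omega_{\pmb{x}}}\pmb{E}\cdot\pmb{J}\,{\rm d}\pmb{x}$, where $\pmb{J}:=\int_{\Omega_{\pmb{v}}}f\pmb{v}\,{\rm d}\pmb{v}$. I must then show the field energy changes by exactly the opposite amount. To do so I would first derive the continuity equation $\partial_t\rho+\nabla_{\pmb{x}}\cdot\pmb{J}=0$ by integrating \eqref{eq:vp} over $\pmb{v}$, then compute $\frac{\rm d}{{\rm d}t}\,\tfrac12\int_{\Omega_{\pmb{x}}}|\pmb{E}|^2=\int_{\Omega_{\pmb{x}}}\pmb{E}\cdot\partial_t\pmb{E}$ using $\pmb{E}=-\nabla_{\pmb{x}}\Phi$ and the time-differentiated Poisson equation \eqref{eq:poiss}, and integrate by parts to reach $\int_{\Omega_{\pmb{x}}}\Phi\,\nabla_{\pmb{x}}\cdot\pmb{J}=-\int_{\Omega_{\pmb{x}}}\pmb{E}\cdot\pmb{J}\,{\rm d}\pmb{x}$. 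Adding the two contributions produces the desired cancellation. I expect the only genuine subtlety to be threading the signs correctly through \eqref{eq:poiss} and the continuity equation so that the two couplings to $\int_{\Omega_{\pmb{x}}}\pmb{E}\cdot\pmb{J}$ match; everything else is boundary-term bookkeeping justified by the assumed boundary conditions.
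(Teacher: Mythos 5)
Your proposal is correct and takes essentially the same route as the paper, whose proof is precisely this moment-testing strategy in sketch form---multiply \eqref{eq:vp} by $1$, $\pmb{v}$, $\pmb{v}^2$, and $f$ and integrate over $\Omega$---with your version merely supplying the details the paper leaves implicit: the divergence-free property of $\pmb{\bbetaa}$, the boundary-term bookkeeping, the identity $\int_{\Omega_{\pmb{x}}}\rho\pmb{E}\ {\rm d}\pmb{x}=0$ via \eqref{eq:poiss}, and the kinetic/field-energy cancellation via the continuity equation. One minor sign slip in an intermediate display: integration by parts with $\pmb{E}=-\nabla_{\pmb{x}}\Phi$ gives $\int_{\Omega_{\pmb{x}}}\Phi\,\nabla_{\pmb{x}}\cdot\pmb{J}\ {\rm d}\pmb{x}=+\int_{\Omega_{\pmb{x}}}\pmb{E}\cdot\pmb{J}\ {\rm d}\pmb{x}$, the minus sign entering only through $\partial_t\rho=-\nabla_{\pmb{x}}\cdot\pmb{J}$, so your final conclusion $\frac{\rm d}{{\rm d}t}\frac{1}{2}\int_{\Omega_{\pmb{x}}}\pmb{E}^2\ {\rm d}\pmb{x}=-\int_{\Omega_{\pmb{x}}}\pmb{E}\cdot\pmb{J}\ {\rm d}\pmb{x}$ and the resulting cancellation still stand.
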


\begin{proof}
    The mass conservation is trivial, which is obtained by integrating the equation \eqref{eq:vp}  in $\Omega$. The momentum, energy and square mass conservation are obtained by multiplying \eqref{eq:vp} by $\pmb{v}$, $\pmb{v}^2$ and $f$ respectively and integrating in $\Omega$.
\end{proof}

\section{Finite element approximations}\label{sec:FEA}
In this section, we present the quadrangulation  of the domain, the construction of finite element spaces, and some notations that appear in this article.

We construct the grid $\calT_{\pmb{x}}$ on $\Omega_{\pmb{x}}$ with the spacing $\Delta x_i$, $i=1,\ldots,d$. Let $\pmb{x}=(x_1,\ldots,x_d)^{\mathsf{T}}$ be a set of coordinates and  $\pmb{\alpha}=(\alpha_1,\ldots,\alpha_d)^{\mathsf{T}}$ be the corresponding multi-index, the polynomial space $\polQ_k$ is defined as follows
\begin{equation}
	\polQ_k={\rm span}\left\{\prod^d_{i=1}x_i^{\alpha_i}=\pmb{x}^{\pmb{\alpha}}:0\le\alpha_i\le k\ {\rm for}\ i=1,\ldots,d\right\},\notag
\end{equation}
and for the mesh $\calT_{\pmb{x}}$ we associate the following continuous approximation space
\begin{equation}
	\calV_{\pmb{x}} := \{w(\pmb{x}): w\in \calC^0(\overline\Omega_{\pmb{x}}),w|_K\in \polQ_k,\forall K\in \calT_{\pmb{x}}\},\notag
\end{equation}
where $\overline\Omega_{\pmb{x}}$ denotes the closure of $\Omega_{\pmb{x}}$. We require that the distribution of the nodes in $\calV_{\pmb{x}}$ is uniform, and denote by $ N_{\pmb{x}}$ the number of the nodes. Let $\phi_i$ be the Lagrange basis function in $\calV_{\pmb{x}}$ whose value is 1 at the $i$-th node and 0 at any other nodes, for $i = 1,\ldots, N_{\pmb{x}} $.

We build the mesh $\calT_{\pmb{v}}$ and the function space $\calV_{\pmb{v}}$ on $\Omega_{\pmb{v}}$ in the same way as described above, and adopt the above definitions to $\Omega_{\pmb{v}}$. In order to facilitate the distinction, we replace the subscript with $\pmb{v}$ and use the notation $\varphi$ for the basis functions in $\calV_{\pmb{v}}$.  Since $\Omega:=\Omega_{\pmb{x}}\times\Omega_{\pmb{v}}$, there exists a grid mesh $\calT_h:=\calT_{\pmb{x}}\times\calT_{\pmb{v}}$ and a function space $\calV_h:=\calV_{\pmb{x}}\times\calV_{\pmb{v}}$ on $\calT_h$; we also know that $\calV_h$ should have $N:= N_{\pmb{x}} N_{\pmb{v}}$ nodes. We denote the basis function of $\calV_h$ by $\psi$ and define $\calI(i)$ as the set of all nodes within the support of $\psi_i$, $i=1,\ldots, N$.
\begin{remark}
  Since $\calT_h=\calT_{\pmb{x}}\times\calT_{\pmb{v}}$ and $\calV_h=\calV_{\pmb{x}}\times\calV_{\pmb{v}}$, for any basis function $\psi\in\calV_h$, there exists a pair $(\phi,\varphi)$, $\phi\in\calV_{\pmb{x}}$ and $\varphi\in\calV_{\pmb{v}}$, such that $\psi = \phi\varphi$. In order to make a understandable notation, we require that the basis functions in $\calV_h$ are indexed such that $\psi_l=\phi_i\varphi_j$, where $l = (i-1) N_{\pmb{v}}+j$, $i=1,\ldots, N_{\pmb{x}}$, $j=1,\ldots, N_{\pmb{v}}$.
\end{remark}

We use the inner product in the $L^2$-spaces and denote the norm as follows
\[
\big( \psi_i, \psi_j \big):=\sum_{K\in \calT}\int_K\psi_i\psi_j\ {\rm d}\pmb{x}{\rm d}\pmb{v},\qquad\Vert\psi_i\Vert^2=(\psi_i,\psi_i).
\]

\subsection{Finite Element Method for Vlasov-Poisson}\label{sec:RVM}

We now introduce Galerkin finite element methods for the Vlasov-Poisson equation \eqref{eq:vp}. This approximation is given by: find $f_h(t)\in \mathcal{C}^1(\mathbb{R}^+;\calV_h)$ such that
\begin{equation}\label{eq:GFEM}
  (\partial_t f_h+\pmb{\bbetaa}_{h}\cdot\nabla f_h, w)=0,\qquad\forall w\in\calV_h,
\end{equation}
where $\pmb{\bbetaa}_h = (\pmb{v},-\nabla_{\pmb{x}}\Phi_h)^{\mathsf{T}}$. We compute $\Phi_h$ by solving the Poisson equation.

However, it is commonly agreed that such Galerkin finite element approximations produce oscillations. The reason for this issue is that the term $\nabla f_h$ when approximated by the finite element method is equivalent to the central difference scheme, which is not stable. We introduce an artificial viscosity scheme to stabilize the finite element approximation, ensuring sufficient stabilization without compromising the method's high accuracy. 

In the following section, for simplicity, we focus on the Forward Euler discretization in time. However, extending the method to higher-order time integration is straightforward using high-order Runge-Kutta schemes.

\subsection{Anisotropic viscosity}
The physical and velocity spaces are often discretized using different mesh resolutions, resulting in an anisotropic mesh structure. The isotropic artificial viscosity methods discussed in the literature, such as \citep{MR2427085, GUERMOND20114248, GuermondNaPo11, MR3011445, Nazarov_Hoffman_2013, MR3612753, MR4754161}, do not provide sufficient stabilization for the numerical approximation. This highlights the need for anisotropic stabilization, which introduces the appropriate amount of viscosity in both the velocity and physical spaces.

Let us define the anisotropic viscosity matrix as 
$
\pmb{\polA}^\textrm{L} := \textrm{diag}(\pmb{\varepsilon}^\textrm{L}_{\pmb{x}}, \pmb{\varepsilon}^\textrm{L}_{\pmb{v}}),
$
where $\pmb{\varepsilon}^\textrm{L}_{\pmb{x}} := \textrm{diag}(\varepsilon^\textrm{L}_{x_1},\ldots, \varepsilon^\textrm{L}_{x_d})$ and $\pmb{\varepsilon}^\textrm{L}_{\pmb{v}} := \textrm{diag}(\varepsilon^\textrm{L}_{v_1},\ldots, \varepsilon^\textrm{L}_{v_d})$, $\varepsilon^\textrm{L}_{x_i}\in\calV_{h}$, $\varepsilon^\textrm{L}_{v_i}\in\calV_{h}$, $i=1,\ldots,d$, are componentwise artificial viscosity coefficients and are constructed as the follows:
\begin{equation}\label{eq:visc:FO}
  \begin{aligned}
    \varepsilon_{x_j}^{\textrm{L},i} := \frac12 \frac{\Delta x_j}{k}\Vert v_j\Vert_{L^{\infty}\calI(i)},\quad 
    \varepsilon_{v_j}^{\textrm{L},i} := \frac12 \frac{\Delta v_j}{k}\Vert E_j\Vert_{L^{\infty}\calI(i)}, 
    \end{aligned}
  \end{equation}
for $i=1,\ldots,N$, $j=1,\ldots,d$, where $\Delta x_j$ and $\Delta v_j$ are mesh-sizes corresponding to $j$-th coordinate of $\bx$ and $\bv$ respectively. 

Let $f_h^n$ and $\pmb{\bbetaa}_h^n$ be the finite element approximations of $f$ and $\pmb{\bbetaa}$ at time $t^n$ respectively.
Denoting $f_h(\bx, \bv, t_n) = \sum_{j=1}^{N} f^n_{j} \psi_j(\bx, \bv)$, and discretizing the time derivative of \eqref{eq:GFEM} using Forward Euler method, we obtain the following fully discrete system:
\begin{equation}\label{eq:FO}
  \sum_{j=1}^{N} m_{ij}\frac{f^{n+1}_{j} - f^n_{j}}{\Delta t} 
  + 
  \sum_{j=1}^{N} c_{ij}(\pmb{\bbetaa}^n_h) f^n_{j} 
  +
  \sum_{j=1}^{N} d_{ij}^{\textrm{L},n} f^n_{j} = 0,
\end{equation} 
where 
$
m_{ij} := (\psi_j, \psi_i)
$
is the mass matrix, 
$
c_{ij}(\pmb{\bbetaa}^n_h):= (\pmb{\bbetaa}^n_h \cdot \nabla \psi_j, \psi_i)
$ is the advection matrix,
and 
$
d_{ij}^{\textrm{L},n} := (\pmb{\polA}^\textrm{L} \GRAD \psi_j, \GRAD \psi_i)
$, $\forall \psi_i,\psi_j\in\calV_h$. The subscript "L" is used for the viscosity matrix, since the artificial viscosity coefficients of \eqref{eq:visc:FO} are only first order that can also be seen later in this section.

The time-step size in \eqref{eq:FO} is controlled using the following CFL condition:
\begin{equation}\label{eq:CFL}
  \Delta t = {\rm CFL}\frac{\big(\sum_{i=1}^d ((\Delta x_i)^2+(\Delta v_i)^2)\big)^\frac12}{k\Vert\pmb{\bbetaa}_{h}^n\Vert_{L^\infty(\Omega)}}.
\end{equation}

The following stability result holds for the first-order forward Euler scheme \eqref{eq:FO}.

\begin{proposition}
The forward Euler scheme \eqref{eq:FO} is stable if the viscosity coefficients are computed from \eqref{eq:visc:FO} and the CFL condition \eqref{eq:CFL} is satisfied.
\end{proposition}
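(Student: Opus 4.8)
The natural notion of stability here is $L^2$-stability, $\Vert f_h^{n+1}\Vert\le\Vert f_h^n\Vert$, which is the fully discrete counterpart of the square-mass conservation in Proposition~\ref{prop:cons}(4). The plan is therefore a discrete energy estimate: I would test the scheme \eqref{eq:FO} with the current iterate $f_h^n$ itself, that is, contract the three matrices in \eqref{eq:FO} against the coefficient vector $(f^n_i)$, and then control the sign of each of the resulting three terms.

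Two structural facts drive the argument. First, the discrete field $\pmb{\bbetaa}_h=(\pmb{v},-\nabla_{\pmb{x}}\Phi_h)^{\mathsf{T}}$ is pointwise divergence-free: $\nabla_{\pmb{x}}\cdot\pmb{v}=0$ since $\pmb{v}$ is independent of $\pmb{x}$, and $\nabla_{\pmb{v}}\cdot(-\nabla_{\pmb{x}}\Phi_h)=0$ since $\Phi_h$ is independent of $\pmb{v}$. Integrating by parts and using the boundary conditions (periodic in $\pmb{x}$, compact support in $\pmb{v}$), the convection form is skew-symmetric, so $(\pmb{\bbetaa}_h^n\cdot\nabla f_h^n,f_h^n)=0$ and the advection matrix drops out of the energy balance. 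Second, because $\pmb{\polA}^{\textrm L}=\textrm{diag}(\pmb{\varepsilon}^{\textrm L}_{\pmb x},\pmb{\varepsilon}^{\textrm L}_{\pmb v})$ has nonnegative diagonal, the viscous term $(\pmb{\polA}^{\textrm L}\nabla f_h^n,\nabla f_h^n)\ge0$ is genuinely dissipative. Combining these with the polarization identity $(f_h^{n+1}-f_h^n,f_h^n)=\tfrac12(\Vert f_h^{n+1}\Vert^2-\Vert f_h^n\Vert^2-\Vert f_h^{n+1}-f_h^n\Vert^2)$ yields the exact balance
\[
\Vert f_h^{n+1}\Vert^2=\Vert f_h^n\Vert^2+\Vert f_h^{n+1}-f_h^n\Vert^2-2\Delta t\,(\pmb{\polA}^{\textrm L}\nabla f_h^n,\nabla f_h^n).
\]

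It then remains to absorb the time-increment term into the dissipation, i.e. to show $\Vert f_h^{n+1}-f_h^n\Vert^2\le 2\Delta t\,(\pmb{\polA}^{\textrm L}\nabla f_h^n,\nabla f_h^n)$; this is the main obstacle, and it is exactly what forces the viscosity to be anisotropic. Estimating $\Vert f_h^{n+1}-f_h^n\Vert$ by testing \eqref{eq:FO} against $f_h^{n+1}-f_h^n$ and invoking a local inverse inequality, the increment is governed coordinate-by-coordinate by the advective pieces $v_j\,\partial_{x_j}f_h^n$ and $E_j\,\partial_{v_j}f_h^n$, and each such piece must be controlled by the matching directional dissipation $\varepsilon^{\textrm L}_{x_j}\Vert\partial_{x_j}f_h^n\Vert^2$ (resp. $\varepsilon^{\textrm L}_{v_j}\Vert\partial_{v_j}f_h^n\Vert^2$). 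The mesh- and degree-dependence of the inverse inequality is compensated by the $\tfrac12\Delta x_j/k$ (resp. $\tfrac12\Delta v_j/k$) scaling in \eqref{eq:visc:FO}, the local factors $\Vert v_j\Vert_{L^{\infty}\calI(i)}$ and $\Vert E_j\Vert_{L^{\infty}\calI(i)}$ match the local advective speeds, and the CFL number in \eqref{eq:CFL} closes the estimate, which is quadratic in $\Delta t$. The delicate point is that each directional viscosity degenerates precisely where its associated advective speed vanishes, so the increment cannot be absorbed by a single isotropic coefficient tied to $\Vert\pmb{\bbetaa}_h\Vert_{L^\infty(\Omega)}$; the estimate must be carried out direction by direction, matching $v_j$ against $\varepsilon^{\textrm L}_{x_j}$ and $E_j$ against $\varepsilon^{\textrm L}_{v_j}$, which is exactly why the anisotropic matrix $\pmb{\polA}^{\textrm L}$ is required and why the isotropic choice over- or under-dissipates.
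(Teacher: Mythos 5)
Your route is genuinely different from the paper's. The paper proves the proposition by an explicit stencil computation: on a uniform mesh in one physical and one velocity dimension, with $\polQ_1$ elements and a lumped mass matrix, it writes out the row of \eqref{eq:FO} at a node (equation \eqref{eq:ith}) and observes that the choice $\varepsilon_x^{\textrm{L}}=\frac12 h_1\beta_1$, $\varepsilon_v^{\textrm{L}}=\frac12 h_2\beta_2$ from \eqref{eq:visc:FO} exactly cancels the centered-difference parts and leaves a first-order upwind scheme, whose stability under the time-step restriction is classical. You instead aim at a discrete $L^2$ (square-mass) estimate with the consistent mass matrix and general $\polQ_k$. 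Your preparatory steps are sound: $\nabla\cdot\pmb{\bbetaa}_h\equiv 0$ does hold (the same fact the paper exploits in Proposition~\ref{prop:conservation}), so with periodic boundary conditions and exact quadrature the advection form is skew-symmetric, and your polarization identity
\[
\Vert f_h^{n+1}\Vert^2=\Vert f_h^n\Vert^2+\Vert f_h^{n+1}-f_h^n\Vert^2-2\Delta t\,(\pmb{\polA}^{\textrm L}\nabla f_h^n,\nabla f_h^n)
\]
is correct. In ambition this is more general than the paper's argument, since it needs neither mass lumping nor $k=1$.

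However, there is a genuine gap at the decisive step, which you name but do not carry out: the absorption $\Vert f_h^{n+1}-f_h^n\Vert^2\le 2\Delta t\,(\pmb{\polA}^{\textrm L}\nabla f_h^n,\nabla f_h^n)$. Two concrete problems arise. First, the increment $f_h^{n+1}-f_h^n$ also contains the viscous flux, whose absorption requires a second inverse estimate and a parabolic-type condition $\Delta t\,\varepsilon_{x_j}^{\textrm L}\lesssim(\Delta x_j/k)^2$; this happens to be of CFL type only because $\varepsilon^{\textrm L}_{x_j}\sim \Delta x_j\Vert v_j\Vert_{L^\infty}/k$, but it must be stated and checked, and you omit it. Second, and more seriously, the direction-by-direction matching you describe forces $\Delta t\lesssim \Delta x_j/(k\Vert v_j\Vert_{L^\infty})$ and $\Delta t\lesssim \Delta v_j/(k\Vert E_j\Vert_{L^\infty})$ for \emph{every} $j$, i.e.\ control by the smallest directional ratio, whereas the CFL condition \eqref{eq:CFL} scales with $\big(\sum_i((\Delta x_i)^2+(\Delta v_i)^2)\big)^{1/2}$, which is dominated by the \emph{largest} mesh spacing. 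On a strongly anisotropic mesh, say $\Delta x\ll\Delta v$ (precisely the regime motivating the paper), \eqref{eq:CFL} permits $\Delta t\sim\Delta v/(k\Vert\pmb{\bbetaa}_h\Vert_{L^\infty})\gg\Delta x/(k\Vert v\Vert_{L^\infty})$, so your claim that ``the CFL number in \eqref{eq:CFL} closes the estimate'' is not justified as stated; what your argument can deliver is stability under the stronger directional condition $\Delta t\le \frac{c}{k}\min_j\min\big(\Delta x_j/\Vert v_j\Vert_{L^\infty},\,\Delta v_j/\Vert E_j\Vert_{L^\infty}\big)$. (The paper's upwind reduction carries an analogous implicit requirement $\Delta t(\beta_1/h_1+\beta_2/h_2)\le 1$, but it never relies on the quantitative absorption inequality your proof hinges on.) A minor further point: since $\varepsilon^{\textrm L}_{x_j}\in\calV_h$, for $k\ge 2$ nonnegative nodal values do not imply $\varepsilon^{\textrm L}_{x_j}\ge 0$ pointwise, so the dissipativity $(\pmb{\polA}^{\textrm L}\nabla f_h^n,\nabla f_h^n)\ge 0$ that your energy balance relies on also deserves a remark, e.g.\ by taking the viscosities patchwise constant or evaluating them at quadrature points.
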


\begin{proof}
To simplify the discussion, let us focus on the proof for the one-dimensional case only. Consider \eqref{eq:vp} in one space dimension for physical and velocity spaces. We discretize the computational domain uniformly with quadrangular elements of size $h_1:=\Delta x$ in the $x$-direction and $h_2:=\Delta v$ in the $v$-direction, as shown in Figure~\ref{fig:2D:example}, where the patch of the mesh around the node $\bx_{i,j}:=(x_i, v_j)$ is depicted. Let $\calV_h$ be a continuous piecewise linear polynomial space in $\polQ_1$ defined on this mesh. 
\begin{figure}[htbp]
\centering{
  \includegraphics[width=0.65\textwidth]{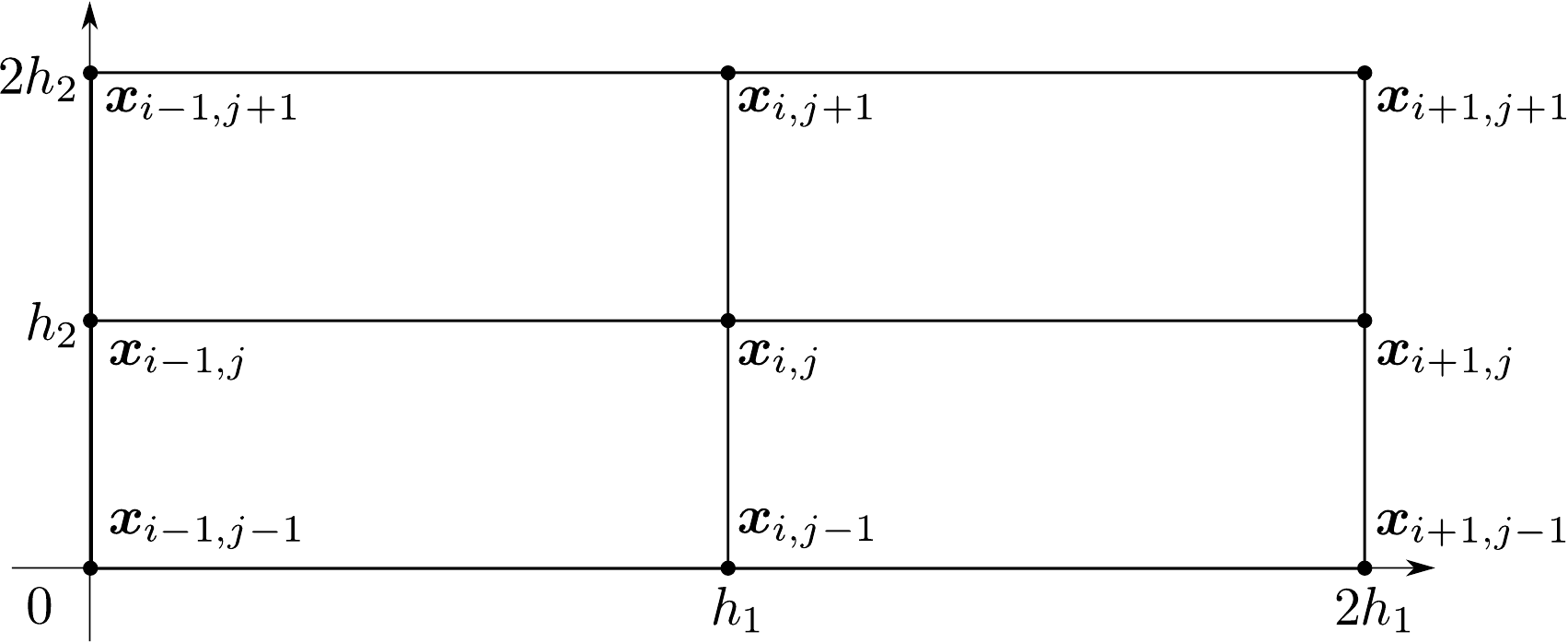}
}
  \caption{Quadrangulation of the domain around the point $\bx_{i,j}:=(x_i, v_j)$.
}\label{fig:2D:example}
\end{figure}

To ease the discussion, let us denote $\bbetaa = (\beta_1, \beta_2)^\top :=(v,E)^\top$, and let both $v\ge 0$ and $E\ge 0$. 
We calculate the lumped mass matrix corresponding to the node $\bx_{i,j}$ which is equal to $h_1 h_2$. In addition let $f^n_{i,j} \approx f(t_n, \bx_{i,j})$ be the approximation of the solution at time level $t_n$ and node $\bx_{i,j}$.

We obtain a system of linear equations, where the equation of this system that is corresponding to the node $\pmb{x}_{i,j}$ has the form
\begin{equation}\label{eq:ith}
  \begin{aligned}
  h_1h_2& \frac{f^{n+1}_{i,j} - f^n_{i,j}}{\Delta t} \\
  &+ 
    \frac1{12}
      \beta_1 h_2 \,
      \Big(
      \big(
      f^n_{i+1,j+1} - f^n_{i-1,j+1}
      \big)
      +
      4\big(
      f^n_{i+1,j} - f^n_{i-1,j}
      \big)
      +
      \big(
      f^n_{i+1,j-1} - f^n_{i-1,j-1}
      \big)
      \Big)\\
    &+ 
    \frac1{12}
      \beta_2 h_1 \,
       \Big(
       \big(
       f^n_{i+1,j+1} - f^n_{i+1,j-1}
       \big)
       +
       4\big(
       f^n_{i,j+1} - f^n_{i,j-1}
       \big)
       +
       \big(
       f^n_{i-1,j+1} - f^n_{i-1,j-1}
       \big)
       \Big) 
    \\
    &+
    \frac16
    \frac{h_2}{h_1}
    \e_x^{\textrm{L}} \,
      \big(
      -f^n_{i+1,j+1}+2 f^n_{i,j+1}-f^n_{i-1,j+1}-4 f^n_{i+1,j}\\
      & \hspace{1.7in}
      +8f^n_{i,j}-4f^n_{i-1,j}-f^n_{i+1,j-1}+2f^n_{i,j-1}-f^n_{i-1,j-1}
      \big)\\
      & +
      \frac16
      \frac{h_1}{h_2}
      \e_v^{\textrm{L}} \,
      \big(
      -f^n_{i+1,j+1}+2f^n_{i+1,j}-f^n_{i+1,j-1}-4f^n_{i,j+1}\\
      & \hspace{1.7in}
      +8f^n_{i,j}-4f^n_{i,j-1}-f^n_{i-1,j+1}+2f^n_{i-1,j}-f^n_{i-1,j-1}
      \big)\\
      &= 0.
  \end{aligned}
\end{equation}

From here one can see that when $\e_x^{\textrm{L}}=0$ and $\e_v^{\textrm{L}}=0$, we get central difference stencils in space, therefore the forward Euler scheme can be unstable. However setting
$\e_x^{\textrm{L}} = \frac12 h_1 \beta_1$
and
$\e_v^{\textrm{L}} = \frac12 h_2 \beta_2$,
and in the definition \eqref{eq:visc:FO}, 
gives us the following scheme
\begin{equation}
  \begin{aligned}
  h_1h_2& \frac{f^{n+1}_{i,j} - f^n_{i,j}}{\Delta t} \\
  &+ 
  \frac16 \beta_1 h_2 
    \Bigg(
    \big(
    f^n_{i,j-1} - f^n_{i-1,j-1}
    \big)
    +
    \big(
    f^n_{i,j+1} - f^n_{i-1,j+1}
    \big)
    + 
    4\big(
    f^n_{i,j}
    -
    f^n_{i-1,j}
    \big)
    \Bigg) \\
  &+
     \frac16 \beta_2 h_1 
     \Bigg(
     \big(
     f^n_{i-1,j} - f^n_{i-1,j-1}
     \big)
     +
     \big(
     f^n_{i+1,j} - f^n_{i+1,j-1}
     \big)
     + 
     4\big(
     f^n_{i,j}
     -
     f^n_{i,j-1}
     \big)
     \Bigg) \\
      &= 0,
  \end{aligned}
\end{equation}
which is a first-order upwind scheme on the given mesh-patch and is stable for the given time-step restriction in the proposition's statement. The proof is completed. 
\end{proof}

\begin{remark}[Isotropic viscosity]\label{remark:isotropic}
Let us assume that the viscosity matrix $\polA$ is constructed in an isotropic manner. Then, the mesh size is typically defined as the smallest edge, which in this example is $h := \min(h_1, h_2) = h_2$. Let us assume that $h_1 < 1$. The viscosity coefficients are then defined as $\e_x^{\textrm{L}} = \e_v^{\textrm{L}} = \frac{1}{2} h |\bbetaa|$, where $|\bbetaa| := (\beta_1^2 + \beta_2^2)^{\frac{1}{2}}$, and the coefficients of the corresponding viscous terms of \eqref{eq:ith} become:
  \[
  \frac16 \frac{h_2}{h_1}\e_x^{\textrm{L}}
  = \frac{1}{12 h_1} h_2^2 |\bbetaa|,
  \text{ and }
  \frac16 \frac{h_1}{h_2}\e_v^{\textrm{L}}
  = \frac{1}{12} h_1|\bbetaa|.
  \]
Now, if we assume that the velocity field is defined as $\bbetaa = (1, 0)$, and that the anisotropy ratio is sufficiently large so that $h_2 \ll h_1$, which implies $h_2^2/h_1 \ll 1$, we obtain the following relation from \eqref{eq:ith}
\begin{equation*}
  \begin{aligned}
  & h_1h_2\frac{f^{n+1}_{i,j} - f^n_{i,j}}{\Delta t} \\
  &+ 
    \frac1{12}
      h_2 \,
      \Big(
      \big(
      f^n_{i+1,j+1} - f^n_{i-1,j+1}
      \big)
      +
      4\big(
      f^n_{i+1,j} - f^n_{i-1,j}
      \big)
      +
      \big(
      f^n_{i+1,j-1} - f^n_{i-1,j-1}
      \big)
      \Big)\\
      & +
      \frac{1}{12}
      h_1\,
      \big(
      (-f^n_{i+1,j+1}+2f^n_{i+1,j}-f^n_{i+1,j-1}) \\
      & \hspace{0.8in}
      +4 (-f^n_{i,j+1}+2f^n_{i,j}-f^n_{i,j-1}) \\
      & \hspace{1.2in}
      +(-f^n_{i-1,j+1}+2f^n_{i-1,j}-f^n_{i-1,j-1})
      \big)
      = 0.
  \end{aligned}
\end{equation*}

The row of the linear system that is corresponding to the node $\pmb{x}_{i,j}$ shows that, since the advection field is acting in the $x$-direction, there are three centered difference schemes on the edges parallel to the $x$-axis: $(\pmb{x}_{i-1, j+1},\pmb{x}_{i+1, j+1})$, $(\pmb{x}_{i-1, j}, \pmb{x}_{i+1, j})$, and $(\pmb{x}_{i-1, j-1}, \pmb{x}_{i+1, j-1})$. However, the amount of viscosity is active on the edges parallel to the $v$-axis: $(\pmb{x}_{i+1, j-1}, \pmb{x}_{i+1, j+1})$, $(\pmb{x}_{i, j-1}, \pmb{x}_{i, j+1})$, and $(\pmb{x}_{i-1, j-1}, \pmb{x}_{i-1, j+1})$, which cannot stabilize the resulting centered difference schemes. 
\end{remark}

\begin{proposition}\label{prop:conservation}
The forward Euler scheme \eqref{eq:FO} is conservative. 
\end{proposition}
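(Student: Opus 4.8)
The plan is to prove the discrete counterpart of the mass-conservation statement in Proposition~\ref{prop:cons}, namely that $\int_\Omega f_h^{n+1}\,{\rm d}\pmb{x}{\rm d}\pmb{v} = \int_\Omega f_h^n\,{\rm d}\pmb{x}{\rm d}\pmb{v}$ for every $n$. The whole argument rests on a single structural fact: the continuous Lagrange basis is a partition of unity, $\sum_{i=1}^N \psi_i \equiv 1$. Summing the fully discrete system \eqref{eq:FO} over all node indices $i=1,\dots,N$ is therefore identical to testing the scheme against the constant function $w\equiv 1\in\calV_h$, and I would treat the three resulting sums, coming from the mass, advection, and viscosity matrices, separately.

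For the mass term, $\sum_{i=1}^N m_{ij} = \big(\psi_j,\sum_{i=1}^N\psi_i\big) = (\psi_j,1) = \int_\Omega \psi_j\,{\rm d}\pmb{x}{\rm d}\pmb{v}$, so that $\sum_{i,j} m_{ij}\,(f_j^{n+1}-f_j^n)/\Delta t$ collapses to $\big(\int_\Omega f_h^{n+1} - \int_\Omega f_h^n\big)/\Delta t$. For the viscosity term the key observation, and the reason the anisotropic stabilization does not spoil conservation, is that $\sum_{i=1}^N d_{ij}^{\textrm{L},n} = \big(\pmb{\polA}^\textrm{L}\GRAD\psi_j,\GRAD\sum_{i=1}^N\psi_i\big) = \big(\pmb{\polA}^\textrm{L}\GRAD\psi_j,\GRAD 1\big)=0$, which holds for any choice of the coefficients \eqref{eq:visc:FO} simply because the gradient of a constant vanishes. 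For the advection term, $\sum_{i,j} c_{ij}(\pmb{\bbetaa}_h^n) f_j^n = \int_\Omega \pmb{\bbetaa}_h^n\cdot\nabla f_h^n\,{\rm d}\pmb{x}{\rm d}\pmb{v}$; here I would use that the discrete field $\pmb{\bbetaa}_h=(\pmb{v},-\nabla_{\pmb{x}}\Phi_h)^{\mathsf{T}}$ is divergence-free, since $\pmb{v}$ is independent of $\pmb{x}$ and $\nabla_{\pmb{x}}\Phi_h$ is independent of $\pmb{v}$, whence $\nabla\cdot\pmb{\bbetaa}_h = \nabla_{\pmb{x}}\cdot\pmb{v}+\nabla_{\pmb{v}}\cdot(-\nabla_{\pmb{x}}\Phi_h)=0$. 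Consequently $\pmb{\bbetaa}_h^n\cdot\nabla f_h^n = \nabla\cdot(f_h^n\pmb{\bbetaa}_h^n)$ and the divergence theorem turns the volume integral into the boundary flux $\int_{\partial\Omega} f_h^n\,(\pmb{\bbetaa}_h^n\cdot\pmb{n})\,{\rm d}s$. Assembling the three pieces yields $\big(\int_\Omega f_h^{n+1}-\int_\Omega f_h^n\big)/\Delta t = -\int_{\partial\Omega} f_h^n\,(\pmb{\bbetaa}_h^n\cdot\pmb{n})\,{\rm d}s$, so conservation follows once the boundary term is shown to vanish.

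The step I expect to be the main obstacle, everything else being either the partition-of-unity identity or a one-line integration by parts, is the rigorous disposal of this boundary flux. It vanishes under the boundary conditions for which the continuous result of Proposition~\ref{prop:cons} holds: periodicity in $\pmb{x}$ makes the contributions on opposite faces of $\Omega_{\pmb{x}}$ cancel, while in the velocity direction one invokes either compact support of $f_h$ inside $\Omega_{\pmb{v}}$ or a zero inflow/outflow condition on $\partial\Omega_{\pmb{v}}$. I would state these boundary assumptions explicitly and, if a fully discrete statement is desired, verify that they are inherited exactly at the level of the finite element spaces (for instance by identifying periodic degrees of freedom), so that no spurious flux is generated by the discretization.
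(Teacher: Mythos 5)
Your proposal is correct and follows essentially the same route as the paper: summing \eqref{eq:FO} over all rows (equivalently, testing with $w\equiv 1$), using the partition of unity to annihilate the viscosity term and to reduce the mass term to the lumped mass, and exploiting $\nabla\cdot\pmb{\bbetaa}_h^n=0$ to eliminate the advection term. The only difference is presentational: the paper packages the advection step as skew-symmetry of $c_{ij}(\pmb{\bbetaa}_h^n)$ with vanishing column sums, whereas you integrate by parts explicitly and check that the resulting boundary flux vanishes under the periodic/zero-flux boundary conditions --- a hypothesis the paper's skew-symmetry claim itself tacitly requires, so your version is, if anything, slightly more careful.
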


\begin{proof}
First of all, we notice that the matrix $c_{ij}(\pmb{\bbetaa}^n_h)$ is skew-symmetric, which gives 
$
c_{ij}(\pmb{\bbetaa}^n_h) = - c_{ji}(\pmb{\bbetaa}^n_h),
$
and 
$
\sum_{j=1}^{N} c_{ij}(\pmb{\bbetaa}^n_h) = 
\sum_{i=1}^{N} c_{ji}(\pmb{\bbetaa}^n_h) = 0
$. This skew-symmetry of $c_{ij}$ comes from the fact that $\nabla \cdot \pmb{\bbetaa}^n_h \equiv 0$, for any $n=0,1,\ldots$.
Similarly, the partition of unity property $\sum_{j=1}^{N} \psi_j = 1$ gives that 
$
\sum_{j=1}^{N} d_{ij}^{\textrm{L},n} = \sum_{i=1}^{N} d_{ji}^{\textrm{L},n} = 0.
$

Summing the equation \eqref{eq:FO} for all $i=1,\ldots,N$, we obtain the following:
\begin{equation*}
  \sum_{i=1}^{N} \sum_{j=1}^{N} m_{ij}\frac{f^{n+1}_{j} - f^n_{j}}{\Delta t} 
  + 
  \sum_{i=1}^{N} \sum_{j=1}^{N} c_{ij}(\pmb{\bbetaa}^n_h) f^n_{j} 
  +
  \sum_{i=1}^{N} \sum_{j=1}^{N} d_{ij}^{\textrm{L},n} f^n_{j} = 0,
\end{equation*} 
which is the same as 
\begin{equation*}
  \sum_{i=1}^{N} \frac{f^{n+1}_{i} - f^n_{i}}{\Delta t} \sum_{j=1}^{N} m_{ij} 
  + 
  \sum_{j=1}^{N} f^n_{j} \sum_{i=1}^{N} c_{ij}(\pmb{\bbetaa}^n_h)  
  +
  \sum_{j=1}^{N}  f^n_{j} \sum_{i=1}^{N} d_{ij}^{\textrm{L},n} = 0.
\end{equation*} 
Again using partition of unity and properties of the advection and diffusion matrices we obtain
\[
\sum_{i=1}^{N} m_i f^{n+1}_{i} = \sum_{i=1}^{N} m_i f^{n}_{i},
\]
which is the mass conservation. Here $m_i := \sum_{j=1}^{N} m_{ij}$ is the lumped mass matrix. 
\end{proof}

\subsection{Nonlinear viscosity}
The upwind scheme \eqref{eq:FO}, which is basically the same as  a well-known Lax-Friedrichs scheme, known for its robustness, stabilizes finite element approximations and mitigates numerical oscillations; however, it is limited to first-order in time and space. One can use higher-order time integration to achieve a higher-order method in time. However, the viscosity coefficients in \eqref{eq:visc:FO} restrict the method to first-order accuracy in space. The convergence order of this scheme can be improved by constructing the viscosity based on the residual of equation \eqref{eq:vp}, we refer to such methods as {\em residual viscosity} or RV methods in short; and this method is expected to be high-order in space, especially when we use high-order polynomials. The residual is effective for detecting discontinuities and strong shocks, and we use it as an indicator to activate the first-order viscosity when the solution is not smooth. Moreover, the artificial viscosity should decrease along with the residual in smooth regions.\par
We now introduce the process to construction the residual viscosity. First, we show how we compute the finite element residual of equation \eqref{eq:vp}. We denote by BDFs($f_h)^n$ $s$-th order backward difference approximation of the time derivative $\partial_t f_h$ at time $t^n$, see for example \cite[Sec.3.2, Appendix A.]{stiernstrom2021residual}.
Let us seek the residual through the following $L^2$-projection: find $ R_h(f^n_h) \in \calV_h$ such that 
\begin{equation}\label{eq:residual}
\begin{aligned}
  \big( R_h,  \psi \big)
  & + 
  \sum_{K\in\calT_h} \sum_{j=1,\ldots,d}\Big( \frac{(\Delta x_j)^2}{k}  
  \partial_{x_j} R_h,\partial_{x_j}\psi \Big)_K \\
  & \hspace{0.4in}+ 
  \sum_{K\in\calT_h} \sum_{j=1,\ldots,d} \Big(\frac{(\Delta v_j)^2}{k}
  \partial_{v_j} R_h,\partial_{v_j}\psi \Big)_K \\
  & \hspace{0.8in} =  \big(\vert{\rm BDFs}(f_h)^n+\pmb{\bbetaa}_{h}^n\cdot\nabla f_h^n\vert, w \big), \quad \forall w \in\calV_h.
  \end{aligned}
\end{equation}
Note that the second and third terms in the above projection problem is elliptic smoothing of the residual, which was first used in the computation of the residual in \cite[Eq. (3.21)]{Lundgren_2024}.

Once the finite element residual is approximated, we construct the following high-order anisotropic viscosity tensor 
$
\pmb{\polA}^\textrm{H} := \textrm{diag}(\pmb{\varepsilon}^\textrm{H}_{\pmb{x}}, \pmb{\varepsilon}^\textrm{H}_{\pmb{v}})
$,
where the viscosity coefficients are defined as 
\begin{equation}\label{eq:rv:visc}
  \begin{aligned}
    \varepsilon_{x_j}^{\textrm{H},i} = \min\left(\varepsilon_{x_j}^{\textrm{L},i}, \left(\frac{\Delta x_j}{k}\right)^2 \frac{R_i}{n_i}\right), \quad 
    \varepsilon_{v_j}^{\textrm{H},i} = 
    \min\left(\varepsilon_{v_j}^{\textrm{L},i}, \left(\frac{\Delta v_j}{k}\right)^2 \frac{R_i}{n_i} \right),
     \end{aligned}
\end{equation}
for every node $i=1,\ldots,N$ and $j = 1, \dots, d$, where $n_i$ represents the nodal values of the normalization function $n_h(f^n_h)$, defined below, we proceed as follows. The normalization function is used to remove the scale of $f_h$ from the residual. Various normalization functions have been presented in the literature where entropy and residual-based stabilization were used. We adapt the normalization technique introduced in \cite{MR4456197} and define
\[
  n_i := (1 - 0.5 \alpha_i) \Big \| f^n_h - \overline{f^n_h} \Big \|, \qquad i=1,\ldots,N,
\]
where $\overline{f^n_h} = \frac{1}{|\Omega|} \int_{\Omega} f^n_h \ud \bx \ud \bv$ is the average of the solution in the domain and $\alpha_i$ is the discontinuity indicator on the patch $\calI(i)$ which is defined as
\[
\alpha_i = \frac{\max_{j \in \calI(i)} f^n_j - \min_{j \in \calI(i)} f^n_j}
{\max_{j\in\{1,\ldots,N\}} f^n_j - \min_{j\in\{1,\ldots,N\}} f^n_j}.
\]
Note, that $\alpha_i$ is zero when solution is constant on the patch. In practice, to avoid division by zero, one can normalize the residual 
$
\frac{R_i}{n_i} \approx R_i \frac{n_i}{n_i^2  + \epsilon},
$ 
with small safety factor $\epsilon = 10^{-14}$.

Finally, upon defining 
$
d_{ij}^{\textrm{H},n} := (\pmb{\polA}^\textrm{H} \GRAD \psi_j, \GRAD \psi_i)
$, $\forall \psi_i,\psi_j\in\calV_h$, we obtain the following high-order in space method:
\begin{equation}\label{eq:HO}
  \sum_{j=1}^{N} m_{ij}\frac{f^{n+1}_{j} - f^n_{j}}{\Delta t} 
  + 
  \sum_{j=1}^{N} c_{ij}(\pmb{\bbetaa}^n_h) f^n_{j} 
  +
  \sum_{j=1}^{N} d_{ij}^{\textrm{H},n} f^n_{j} = 0.
\end{equation} 

As mentioned earlier, the method can be made high-order in time using Runge-Kutta schemes. In this paper, we use the five stages fourth-order explicit strong stability preserving (SSP) Runge-Kutta method from \cite{Kraaijevanger_1991} in all our numerical simulations, with the time step controlled by the CFL condition \eqref{eq:CFL}.

For the smooth solution, the accuracy of the residual computation of \eqref{eq:residual} is $R_h \approx \calO(\Delta t^s, \Delta x^{k+1}, \Delta v^{k+1})$, which gives the accuracy of the viscosity coefficients to be 
$\varepsilon_{\pmb{x}}^{\textrm{H}} \approx \Delta x^2 \, \calO(\Delta t^s, \Delta x^{k+1}, \Delta v^{k+1})$
and
$\varepsilon_{\pmb{v}}^{\textrm{H}} \approx \Delta v^2 \, \calO(\Delta t^s, \Delta x^{k+1}, \Delta v^{k+1})$. 
Since the highest polynomial space used in this paper is $\polQ_3$, we aim to achieve a fourth-order convergence rate in both time and space for smooth initial data. 
Therefore, the second-order Backward Differentiation Formula
\[
  {\rm BDF}2(f_h)^n = \frac{3f_h^n-4f_h^{n-1}+f_h^{n-2}}{2\Delta t}.
\]
is sufficient to ensure that the viscosity coefficients in \eqref{eq:rv:visc} remain 4-th order accurate, thereby preserving the desired accuracy. The time derivative of $f_h$ in the residual is approximated using BDF2 in all numerical simulations presented later in this paper.

For discontinuous initial data, the residual has a jump of order $\calO(\Delta x^{-1})$ and/or $\calO(\Delta v^{-1})$. Consequently, the first-order viscosity term becomes active in this situation, resulting in a first-order accuracy of the scheme.

\subsection{Computation of the charge density} 
Let the function $\rho_h\in\calV_{\pmb{x}}$ approximate the charge density $\rho$, then this function can be expressed as a linear combination of the basis functions in $\calV_{\pmb{x}}$ as follows
\begin{equation}
  \rho_h(t, \pmb{x}) =\sum_{i=1}^{N_{\pmb{x}}} \phi_i(\pmb{x}) \rho_i(t), \label{eq:charge1}
\end{equation}
where  $\{\rho_i\}_{i=1}^{N_{\pmb{x}}} $ are the nodal values of $\rho_h$. Considering that $f_h\in\calV_h$, we also apply the following representation to $f_h$
\begin{equation*}
  f_h(t, \pmb{x}, \pmb{v})=\sum_{l=1}^N \psi_l(\pmb{x}, \pmb{v}) f_l(t).
\end{equation*}
According to equation \eqref{eq:charge}, we calculate $\rho_h$ as follows
\begin{equation}\label{eq:charge2}
  \begin{aligned}
    \rho_h(t, \pmb{x})
    &=
    \int_{\Omega_{{\pmb{v}}}}f_h(t, \pmb{x}, \pmb{v})\ {\rm d}{\pmb{v}}
    =\int_{\Omega_{{\pmb{v}}}} 
    \sum_{l=1}^{N} \psi_i(\pmb{x}, \pmb{v}) f_l(t)\ {\rm d}{\pmb{v}}\\
    &=
    \int_{\Omega_{\pmb{v}}}
    \sum_{i=1}^{N_{\pmb{x}}}
    \sum_{j=1}^{N_{\pmb{v}}}\phi_i(\pmb{x})\varphi_j(\pmb{v}) f_{(i-1) N_{\pmb{v}}+j}(t)\ {\rm d}{\pmb{v}}\\
    &=
    \sum_{i=1}^{N_{\pmb{x}}} 
    \Big(
    \phi_i(\pmb{x})\int_{\Omega_{\pmb{v}}}
    \sum_{j=1}^{N_{\pmb{v}}}\varphi_j(\pmb{v}) f_{(i-1) N_{\pmb{v}}+j}(t)\ {\rm d}{\pmb{v}}
    \Big).
  \end{aligned}
\end{equation}
By comparing \eqref{eq:charge1} and \eqref{eq:charge2}, we conclude that the nodal values of $\rho_h$ can be computed as follows
\begin{equation}\label{eq:charge3}
\begin{aligned}
  \rho_i(t) 
  &= 
  \int_{\Omega_{\pmb{v}}}
  \sum_{j=1}^{N_{\pmb{v}}}\varphi_j(\pmb{v}) f_{(i-1) N_{\pmb{v}}+j}(t)\ {\rm d}{\pmb{v}} \\
  &= 
  \sum_{j=1}^{N_{\pmb{v}}}
  \Big(f_{(i-1) N_{\pmb{v}}+j}(t)\int_{\Omega_{\pmb{v}}}\varphi_j(\pmb{v})\ {\rm d}{\pmb{v}}
  \Big),\qquad\forall i\in\{1: N_{\pmb{x}}\}.
\end{aligned}
\end{equation}

The above computation is performed through cell iteration. Let us consider the case when $\Omega_x$ and $\Omega_v$ are one-dimensional. For any element $K$ in $\calT_h$, there exists one element in $\calT_x$ in the same column as $K$, and one element in $\calT_v$ in the same row as $K$, as shown in Figure \ref{fig:dofs}, we assign local index to the nodes in these elements. 
\begin{figure}[htbp]
  \centering
      \includegraphics[width=0.8\linewidth]{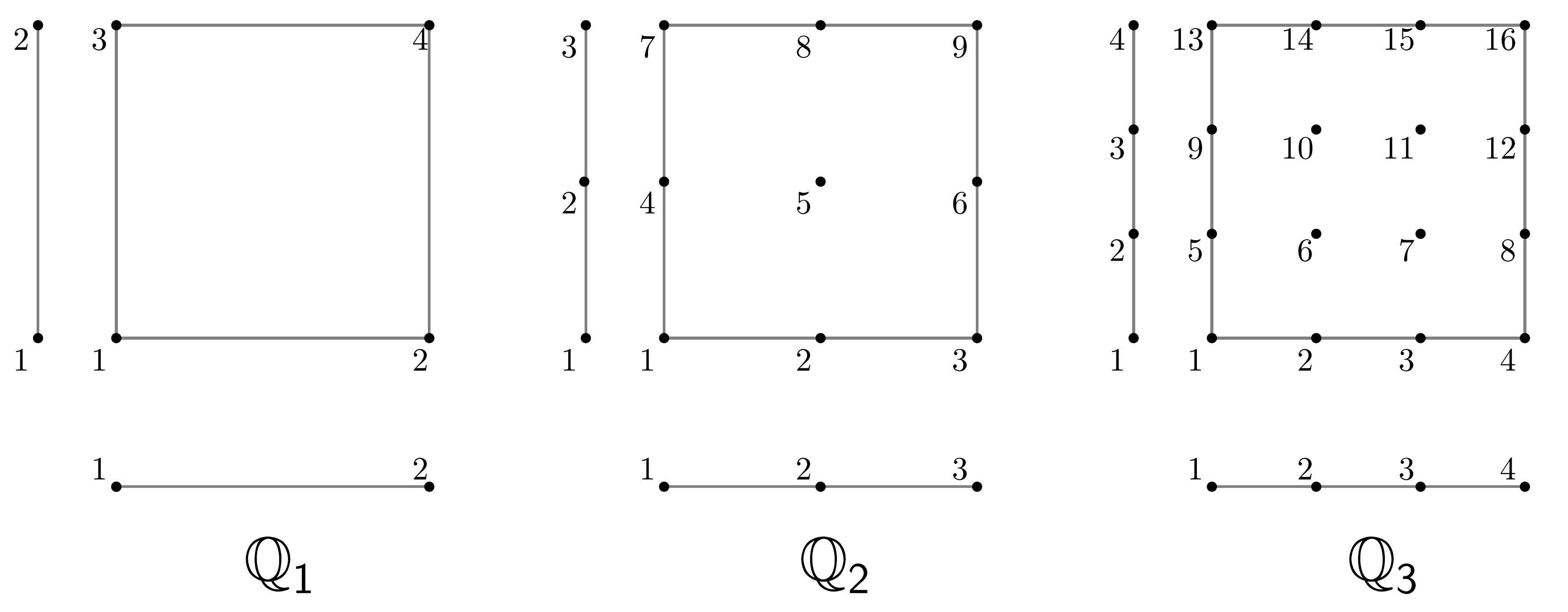}
      \caption{Local index for the nodes in $\calV_x$, $\calV_v$, and $\calV_h$, when the polynomial degree is 1, 2, and 3.}
  \label{fig:dofs}
\end{figure}

For the $\mathbb{Q}_1$ element in $\calV_h$, assume that the coordinates of the left lower corner and right upper corner are $(x_{\rm min},v_{\rm min})$ and $(x_{\rm max},v_{\rm max})$, respectively. For the node whose local index is $i \in \{1,2\}$, we denote by $\widehat{\varphi}_i$ and $\widehat{\rho}_i$ the basis function and the function value of $\rho_h$ at this node; let $\widehat{f_l}$, $l\in\{1,2,3,4\}$ be the nodal value of $f_h$ at the node with the local index $l$, see the left panel of Figure~\ref{fig:dofs}. The first basis function is $\widehat{\varphi}_1(v)=\frac{v_{\rm max}-v}{\Delta v}$, and we compute the integral of $\widehat{\varphi}_1$ in the $v$-direction within the element:
\begin{equation}
  \begin{aligned}
    \int_{v_{\rm min}}^{v_{\rm max}}\widehat{\varphi}_1(v)\ {\rm d}v&=\int_{v_{\rm min}}^{v_{\rm max}}\frac{v_{\rm max}-v}{\Delta v}\ {\rm d}v
    =\frac{\Delta v}{2},\label{eq:integ}
  \end{aligned}\notag
\end{equation}
by making use of the symmetry of the elements, we also know that $ \int_{v_{\rm min}}^{v_{\rm max}}\widehat{\varphi_2}(v)\ {\rm d}v=\frac{\Delta v}{2}$. Therefore, the charge density given by the particles within each single element is computed as follows 
\begin{equation}\label{eq:nodecharge}
  \left\{
  \begin{aligned}
    \widehat{\rho_1}=\Delta v\left(\frac{\widehat{f_1}}{2}+\frac{\widehat{f_3}}{2}\right),\\
    \widehat{\rho_2}=\Delta v\left(\frac{\widehat{f_2}}{2}+\frac{\widehat{f_4}}{2}\right).
  \end{aligned}
  \right.
 \end{equation}

Now, let us consider the $\polQ_2$ space. With the similar discussion above, we follow the indices depicted in the middle panel of Figure~\ref{fig:dofs} and get the following conclusions
for $\polQ_2$:
 \begin{equation}\label{eq:nodecharge:Q2}
  \left\{
  \begin{aligned}
    \widehat{\rho_1}=\Delta v\left(\frac{\widehat{f_1}}{6}+\frac{\widehat{f_4}}{3}+\frac{\widehat{f_7}}{6}\right),\\
    \widehat{\rho_2}=\Delta v\left(\frac{\widehat{f_2}}{6}+\frac{\widehat{f_5}}{3}+\frac{\widehat{f_8}}{6}\right),\\
    \widehat{\rho_3}=\Delta v\left(\frac{\widehat{f_3}}{6}+\frac{\widehat{f_6}}{3}+\frac{\widehat{f_9}}{6}\right).\\
  \end{aligned}
  \right.
\end{equation}
and with the similar discussion we follow the indices depicted in the right panel of Figure~\ref{fig:dofs} for the $\polQ_3$ case and get:
\begin{equation}\label{eq:nodecharge:Q3}
  \left\{
  \begin{aligned}
    \widehat{\rho_1}=\Delta v\left(\frac{\widehat{f_1}}{8}+\frac{3 \widehat{f_5}}{8}+\frac{3 \widehat{f_9}}{8}+\frac{\widehat{f_{13}}}{8}\right),\\
    \widehat{\rho_2}=\Delta v\left(\frac{\widehat{f_2}}{8}+\frac{3 \widehat{f_6}}{8}+\frac{3 \widehat{f_{10}}}{8}+\frac{\widehat{f_{14}}}{8}\right),\\
    \widehat{\rho_3}=\Delta v\left(\frac{\widehat{f_3}}{8}+\frac{3 \widehat{f_7}}{8}+\frac{3 \widehat{f_{11}}}{8}+\frac{\widehat{f_{15}}}{8}\right),\\
    \widehat{\rho_4}=\Delta v\left(\frac{\widehat{f_4}}{8}+\frac{3 \widehat{f_8}}{8}+\frac{3 \widehat{f_{12}}}{8}+\frac{\widehat{f_{16}}}{8}\right).\\
  \end{aligned}
  \right. 
\end{equation}

Note that, the quadrature rules shown in \eqref{eq:nodecharge}, \eqref{eq:nodecharge:Q2} and \eqref{eq:nodecharge:Q3} are exact for the corresponding polynomial spaces. 

\begin{remark}
  The local index does not have to be identical as described above. Once the meshes and function spaces are created, we can compute and store the local and global indices of all the degrees of freedom; this procedure is straightforward to implement for example in the framework of the DOLFINx project \cite{barrata2023dolfinx}, that is used to do the numerical simulation of this work.
\end{remark}

\subsection{Solution to the Poisson equation} \label{sec:FEPoiss}
The solution to the Poisson equation \eqref{eq:poiss} is not unique when boundary conditions such as pure Neumann or periodic boundary conditions are applied. In these cases, if a solution $\Phi_h$ to the Poisson equation is found, then clearly $\Phi_h + C$ is also a solution, where $C$ is any constant. However, both $\Phi_h$ and $\Phi_h + C$ yield the same electric field, since $\nabla_{\pmb{x}} C \equiv 0$, meaning the constant does not affect the electric field $\pmb{E}$. Therefore, it is sufficient to determine a single solution to the Poisson equation by introducing additional constraints, such as
\begin{equation}
    \int_{\Omega_{\pmb{x}}}\Phi_h\ {\rm d}\pmb{x}=0.\label{eq:const}
\end{equation}
The finite element method for the Poisson equation can be obtained by multiplying it with a test function $\phi\in\calV_{\pmb{x}}$ and integrating by parts. We employ the Lagrange multiplier to accomplish constraint \eqref{eq:const}. By introducing the additional unknown constant $c$, which belongs to the real number space $\mathbb{R}$, the following variational formulation for the Poisson equation \eqref{eq:poiss} is obtained: find $(\Phi_h,c)\in\calV_{\pmb{x}}\times\mathbb{R}$ such that
\begin{equation}
  \begin{aligned}
\int_{\Omega_{\pmb{x}}}\nabla_{\pmb{x}}\Phi_h\cdot\nabla_{\pmb{x}}\phi\ {\rm d}\pmb{x}+\int_{\Omega_{\pmb{x}}}c \phi\ {\rm d}\pmb{x}+\int_{\Omega_{\pmb{x}}}\Phi_h w\ {\rm d}\pmb{x}\\
    =\int_{\Omega_{\pmb{x}}}(\rho_h-\rho_0)\phi\ {\rm d}\pmb{x},\qquad\forall(\phi,w)\in\calV_{\pmb{x}}\times\mathbb{R}.\notag
  \end{aligned}
\end{equation} 
The Lagrange multiplier approach ensures that the solution to the Poisson equation \eqref{eq:poiss} is well-posed and unique.
Once the above Poisson equation is solved, we can obain the electric field by $\pmb{E}_h=-\nabla_{\pmb{x}}\Phi_h$.

\subsection{Summary of the Algorithm}  
In conclusion, the finite element solution to the Vlasov-Poisson equation \eqref{eq:vp} given by the RV method can be obtained by the following steps. At each time step, assume that we have obtained the solution $f_h^n$, we calculate the solution $f_h^{n+1}$ as follows
\begin{enumerate}
  \item Compute the charge density using \eqref{eq:charge3}, and solve the Poisson equation as described in Section \ref{sec:FEPoiss}.
  \item Calculate the residual \eqref{eq:residual}, and construct the artificial viscosity \eqref{eq:rv:visc} using the first-order and high-order viscosity.
  \item Compute the time step using the CFL condition \eqref{eq:CFL}.
  \item Apply the five stages fouth-order explicit SSP-Runge-Kutta method from \cite{Kraaijevanger_1991} with the Euler steps as in \eqref{eq:HO}.
\end{enumerate}

\begin{remark}[Extending the method to higher dimensions]
The stabilized finite element approximation of the Vlasov-Poisson system presented above can be easily extended to higher dimensions. Since the method is explicit in time, the most computationally demanding tasks involve storing and inverting matrices at each Euler step \eqref{eq:HO} and computing the residual \eqref{eq:residual}. A matrix-free framework, as presented in \citep{MR4321466}, where the authors employed a DG scheme for the six-dimensional Vlasov-Poisson system, can be readily applied to our setting, given that we also use tensor-product finite elements on quadrilateral elements. 
\end{remark}

\section{Numerical examples}\label{sec:experiment} 

In this section, we verify our proposed finite element methods in the 1D1V Vlasov-Poisson equation. The number of elements is $(N_x-1)/k$ and $(N_v-1)/k$
in the physical and velocity spaces, respectively. The methods are verified by solving several classic benchmark problems of Vlasov-Poisson equations, and we evaluate the accuracy of the methods through a convergence study.

In the following numerical examples, we use periodic boundary conditions, which are implemented by mapping the nodal points to the two opposite boundaries of the rectangular domains. We use ${\rm CFL}=0.4$ for all the experiments and for every polynomial degrees in this work. 
 
\subsection{Landau damping} 

First, we test our methods in the Landau damping phenomenon. We set the phase space to be $\Omega:=[0,4\pi]\times[-6,6]$, and use the following initial data
\begin{equation}
  f(x,v,0)=\frac{1}{\sqrt{2\pi}}{\rm exp}\left(-\frac{v^2}{2}\right)(1+\alpha{\rm cos}(\theta x)),\notag
\end{equation}
where $\theta = 0.5$. We calculate the logarithm of the square root of the electric energy $E_e = {\rm ln}\big( \int_{\Omega_x}(-\partial_x\Phi_h)^2\ {\rm d}x \big)^\frac12$
and plot the evolution of $E_e$ over time for several values of $\alpha$. 
\begin{figure}[htbp] 
  \centering
  \begin{subfigure}[b]{0.5\textwidth}
      \centering
      \includegraphics[width=\linewidth]{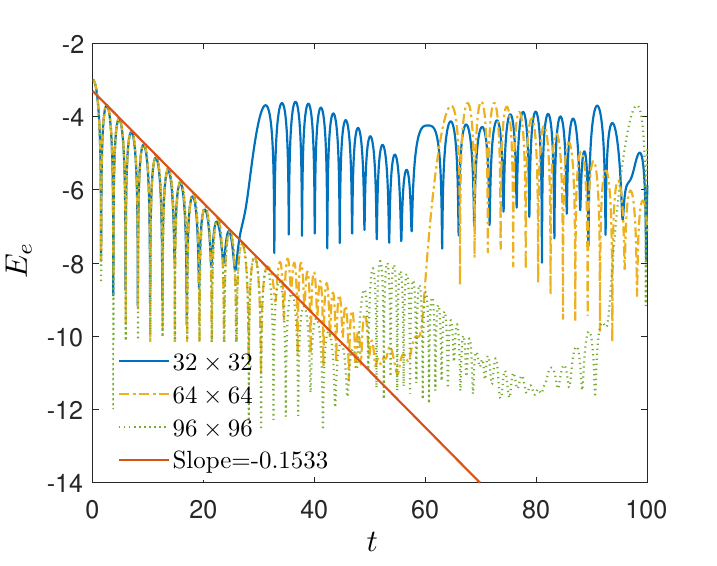}
      \caption{$\mathbb{Q}_1$ solutions}
  \end{subfigure}\hfill
  \begin{subfigure}[b]{0.5\textwidth}
      \centering
      \includegraphics[width=\linewidth]{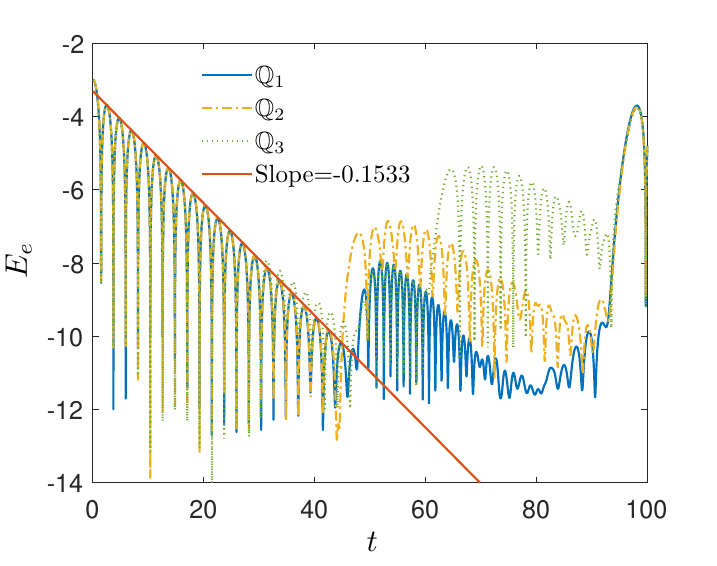}
      \caption{$N_x\times N_v=97\times97$}
  \end{subfigure}\hfill
  \caption{Landau damping. Linear damping, evolution of $E_e$ for $\alpha=0.01$. The red line represents the analytical damping rate. The RV solutions using $\polQ_1$ elements with varying degrees of freedom (a) and different polynomial degrees with the same degree of freedom distribution (b).} 
  \label{fg:Landau1}
\end{figure}

When $\alpha = 0.01$, the damping is relatively weak, and $E_e$ is expected to decrease linearly; the analytical damping rate is 0.1533, as introduced in \cite{MR2016914}. 
Since we discretize the Vlasov-Poisson equation using a grid-based scheme, the recurrence phenomenon will occur in the electric energy. One straightforward 
way to delay the recurrence time is by refining the mesh. Figure \ref{fg:Landau1} (a) includes $\polQ_1$ solutions with different amounts of elements, showing that the recurrence occurs at a larger time stage as the mesh is refined.
We also compare the performance of elements with varying degrees. Since high-order elements inherently contain more degrees of freedom, we ensure fairness by comparing solutions obtained using the same node distribution in Figure \ref{fg:Landau1} (b). From $t=30$ to $t=90$, the lower-order solutions perform better to some extent, as the damping rate remains closer to the analytical rate for a longer duration, and the electric energy is lower compared to the higher-order solution. For finite element methods, high-order polynomials usually achieve higher convergence rates in smooth problems, while low-order polynomials have better performance in discontinuous cases or when shocks or oscillations are present, see for instance \citep[Sec.~4.2]{MR4754161} and \citep[Sec.~6.2]{Kronbichler_2024}. For the Landau damping, the recurrence happens due to the oscillations in the velocity distribution casued by the small initial perturbations, as discussed in \cite{MR4126517}. Therefore, it is not surprising that the lower-order scheme outperforms the higher-order one in this scenario. In addition, we observe that the electric energy eventually recovers to the same level for all element degrees.

\begin{figure}[htbp]
  \centering
  \begin{subfigure}[b]{0.5\textwidth}
      \centering
      \includegraphics[width=\linewidth]{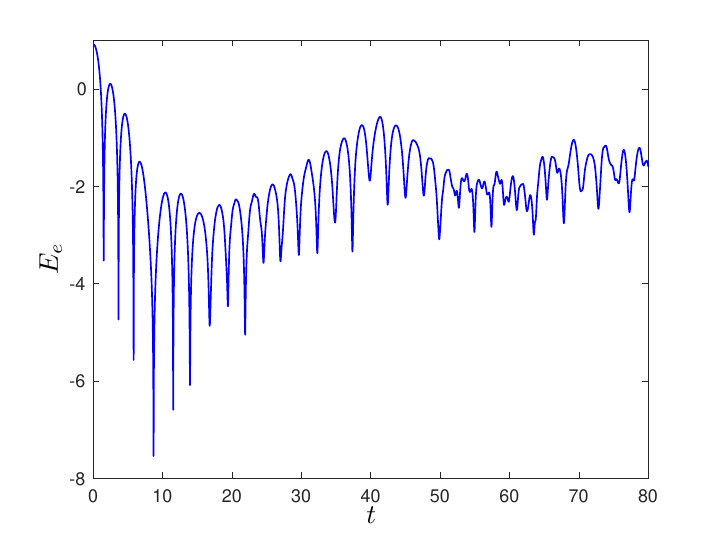}
      \caption{$\polQ_1$ Galerkin FE solution}
  \end{subfigure}\hfill
  \begin{subfigure}[b]{0.5\textwidth}
      \centering
      \includegraphics[width=\linewidth]{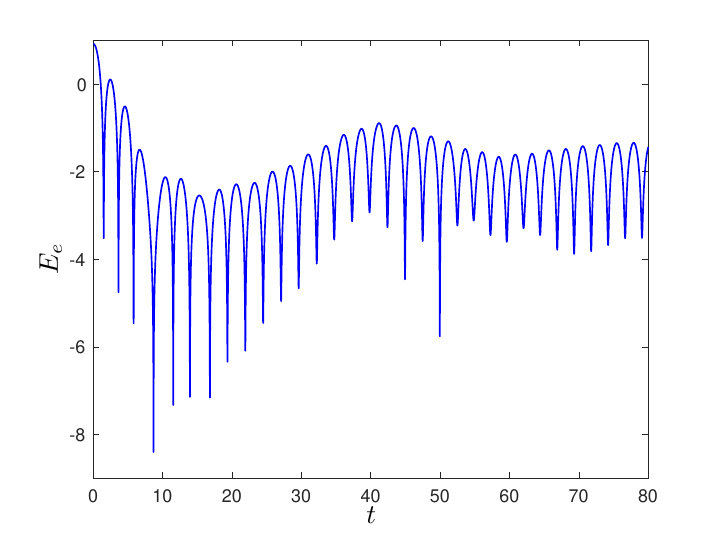} 
      \caption{$\polQ_1$ RV solution}
  \end{subfigure}\hfill
  \begin{subfigure}[b]{0.5\textwidth}
      \centering
      \includegraphics[width=\linewidth]{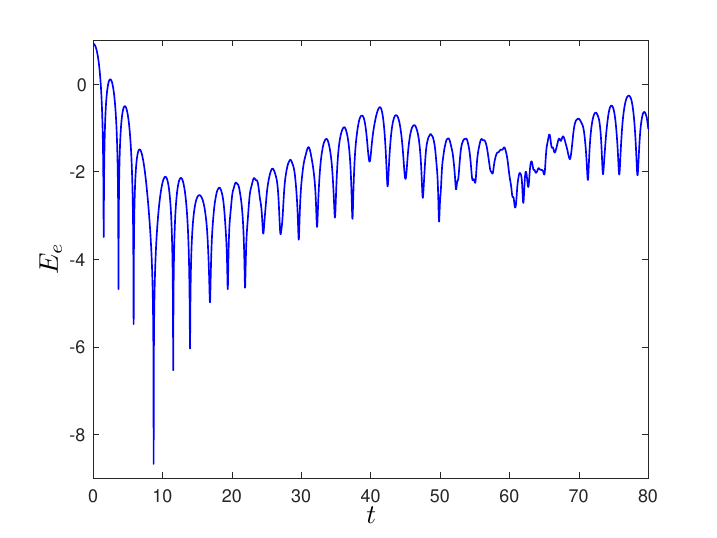}
      \caption{$\polQ_2$ Galerkin FE solution}
  \end{subfigure}\hfill
  \begin{subfigure}[b]{0.5\textwidth}
      \centering
      \includegraphics[width=\linewidth]{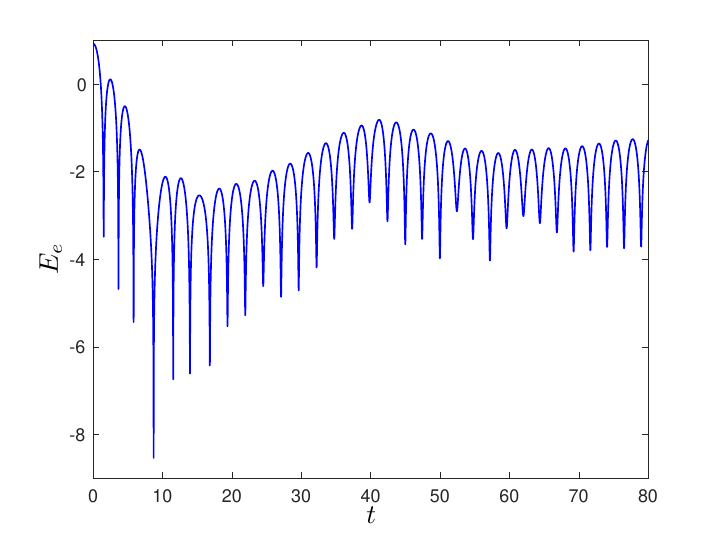} 
      \caption{$\polQ_2$ RV solution}
  \end{subfigure}\hfill
  \begin{subfigure}[b]{0.5\textwidth}
      \centering
      \includegraphics[width=\linewidth]{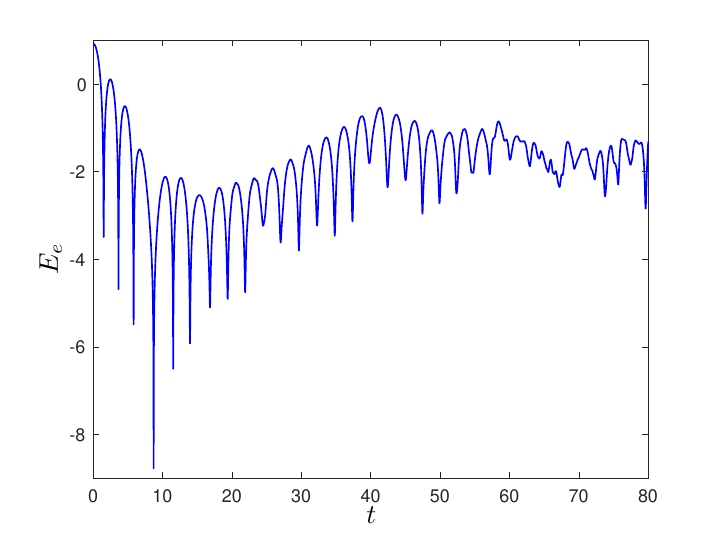}
      \caption{$\polQ_3$ Galerkin FE solution}
  \end{subfigure}\hfill
  \begin{subfigure}[b]{0.5\textwidth}
      \centering
      \includegraphics[width=\linewidth]{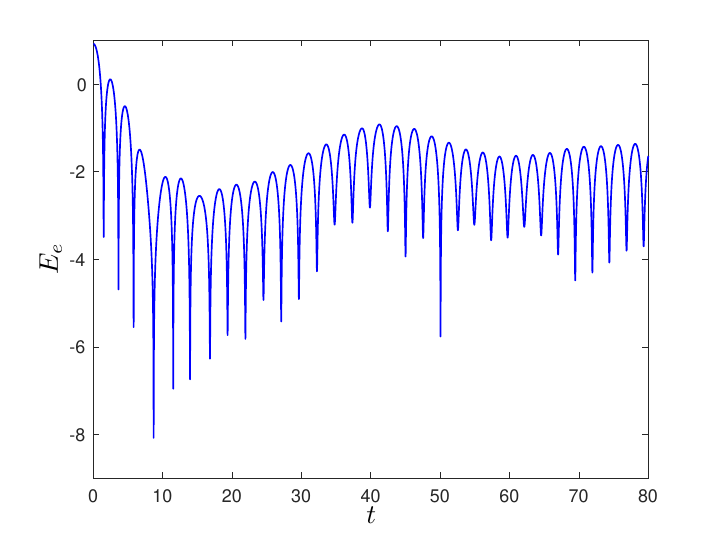} 
      \caption{$\polQ_3$ RV solution}
  \end{subfigure}\hfill
  \caption{Landau damping. Strong damping, the evolution of $E_e$ for $\alpha = 0.5$. The distribution of nodes is $N_x\times N_v=49\times97$. Comparison of the Galerkin solution and stabilized RV solutions for different polynomial spaces.}
  \label{fg:Landau2}
\end{figure}

Now, let us consider the case when $\alpha=0.5$. In this case, the damping is strong, and the solutions are expected to produce shocks and discontinuities. This makes it a suitable benchmark problem to demonstrate the necessity of using artificial viscosity. We present the results obtained by the Galerkin finite element and residual viscosity (RV) methods in Figure \ref{fg:Landau2}, using $\polQ_1$, $\polQ_2$, and $\polQ_3$ elements with the same distribution of nodes, i.e., $N_x\times N_v=49\times97$. The standard Galerkin FE method is unable to resolve the spurious oscillations, regardless of the polynomial degree, as shown in Figure \ref{fg:Landau2} (a), (c), and (e). Conversely, in the RV solutions, the electric energy decreases exponentially at first, then oscillates periodically, which closely matches the reference solutions.

\subsection{Two-stream instability} 
Now we consider the two-stream instability of Vlasov-Poisson equations in the phase space $\Omega := [0,4\pi]\times[-5,5]$. Let the initial data be
\begin{equation}
  f(x,v,0)=\frac{1}{\sqrt{2\pi}}v^2{\rm exp}\left(-\frac{v^2}{2}\right)(1+\alpha{\rm cos}(\theta x)), \notag
\end{equation}
where $\alpha = 0.01$, $\theta = 0.5$. 

We test the convergence rates of our methods through a convergence study. In most cases, the analytical solutions for Vlasov-Poisson equations are not available, but we still be able to obtain the analytical solutions under some specific settings as introduced in \cite{MR3267101}. Assuming periodic boundary conditions and symmetric velocity space, we solve the Vlasov-Poisson equation with the initial data $f(\pmb{x},\pmb{v},0)$ from $t=0$ to $t=T$, and obtain the solution $f(\pmb{x},\pmb{v},T)$. Then if we use the function $f(\pmb{x},-\pmb{v},T)$ as the initial data and solve the Vlasov-Poisson equation again, the analytical solution at $t=T$ is supposed to be $f(\pmb{x},-\pmb{v},0)$. We apply both the standard Galerkin finite element and RV methods, using $\polQ_1$, $\polQ_2$, and $\polQ_3$ elements, and the reference solutions are obtained by the method mentioned above, with $T=5$. We calculate the errors of the solutions and the $L^1$-, $L^2$-, and $L^\infty$-norms of the errors. We run the simulations with different amounts of degrees of freedom, and then calculate the convergence rates of the methods, as shown in Table \ref{tab:GFEM} and \ref{tab:RV}; moreover, the $L^2$-norms of the errors are plotted in Figure \ref{fig:convergence}.  

Based on the results organized here, we conclude that the artificial viscosity does not decrease the accuracy of the Galerkin finite element method when solving smooth problems. We observe that the convergence rates for the $\polQ_1$ and $\polQ_3$ solutions are second-order and fourth-order, respectively, aligning with our expectations. However, the continuous Galerkin method is known to be a sub-optimal scheme, which means the second-order polynomial usually exhibits a second-order convergence instead of third-order, for more details, see \cite{MR3148555}. This phenomenon has been reported in previous works, as referenced in \cite{MR4456197,MR3612753}. In our results, the convergence rate of the $\polQ_2$ solution is higher than second-order and approaches third-order. This finding is not surprising, because the basis function of the $\polQ_2$ element contains more terms compared to that of the $\polP_2$ element in the reference papers, i.e., $\polP_2 = {\rm span}\{1,x,v,x^2,v^2,xv\}$, while $\polQ_2 = {\rm span}\{1,x,v,x^2,v^2,xv,x^2v,xv^2,x^2v^2\}$; therefore, the $\polQ_2$ solutions can achieve recovered convergence rates. 

The $\polQ_3$ solutions in the mesh consisting of $128\times 256$ elements are plotted in Figure \ref{fg:B2_fh}. It is noticed that there are two components that are screwed together gradually in the phase space, and a vortex structure eventually develops. The obtained results coincidence with those in numerous previous works, see for instance \cite{MR4402737,CROUSEILLES20091429, MR4354369}. We also plot the artificial viscosity of the $\polQ_1$ and $\polQ_3$ solutions in Figure \ref{fg:B2_mu}, and we observe that the artificial viscosity tracks the shocks of the distribution function. 
The viscosity \(\varepsilon_x\) and \(\varepsilon_v\) have similar distributions but differ in scale. This is because the residual viscosity dominates, instead of the first-order viscosity, and the residual is normalized with distinct mesh sizes in different dimensions. The $\polQ_3$ viscosity appears blurrier due to the noise in the residuals for high-order polynomials.

\begin{table}[htbp]
    \caption{Two-stream instability: errors of Galerkin FE solutions. Run until $t=5$ and then back to $t=10$.} \label{tab:GFEM}
  \begin{center}
    \begin{tabular}{c|c|c|c|c|c|c|c} \hline
      &$N_x\times N_v$&$L^1$-error&Rate&$L^2$-error&Rate&$L^\infty$-error&Rate\\ \hline
      \multirow{4}{*}{$\polQ_1$} &$31\times31$ & 2.05E-02&-&2.21E-02 &- &3.44E-02&- \\
     &$61\times61$&5.14E-03&1.99&5.52E-03&2.00&9.27E-03&1.89 \\
     &$121\times121$&1.28E-03&2.00&1.38E-03&2.00&2.35E-03&1.98 \\
     &$241\times241$&3.21E-04&2.00&3.45E-04&2.00&5.89E-04&1.99  \\ \hline
     \multirow{4}{*}{$\polQ_2$} &$31\times31$ & 6.95E-03&-&8.11E-03 &- &1.39E-02&- \\
     &$61\times61$&1.03E-03&3.02&1.04E-03&2.96&1.76E-03&2.98 \\
     &$121\times121$&1.28E-04&3.02&1.31E-04&2.99&2.24E-04&2.98 \\
     &$241\times241$&1.60E-05&3.00&1.64E-05&3.00&2.80E-05&3.00  \\ \hline
     \multirow{4}{*}{$\polQ_3$} &$31\times31$ & 2.86E-03&-&2.66E-03 &- &4.01E-03&- \\
     &$61\times61$&2.30E-04&3.63&2.37E-04&3.49&4.03E-04&3.31 \\
     &$121\times121$&1.44E-05&4.00&1.50E-05&3.98&2.92E-05&3.79 \\
     &$241\times241$&9.05E-07&3.99&9.43E-07&3.99&1.90E-06&3.95 \\ \hline
    \end{tabular}
  \end{center}
  \end{table}
  \begin{table}[htbp]
    \caption{Two-stream instability: errors of RV solutions. Run until $t=5$ and then back to $t=10$.}  \label{tab:RV}
  \begin{center}
    \begin{tabular}{c|c|c|c|c|c|c|c} \hline
      & $N_x\times N_v$ &   $L^1$-error  &    Rate  &    $L^2$-error   &    Rate&    $L^\infty$-error   & Rate      \\ \hline
       \multirow{4}{*}{$\polQ_1$} 
     &$31\times31$ &2.21E-02 &- &2.36E-02 &- &3.73E-02 &-  \\
     &$61\times61$ &5.46E-03 &2.01 &5.88E-03 &2.01 &9.94E-03 &1.91 \\
     &$121\times121$ &1.32E-03 &2.05 &1.42E-03 &2.05 &2.43E-03 &2.03\\
     &$241\times241$ &3.24E-04 &2.02 &3.50E-04 &2.02 &5.98E-04 &2.02 \\ \hline
     \multirow{4}{*}{$\polQ_2$} 
     &$31\times31$ &7.93E-03 &- &8.92E-03 &- &1.62E-02 &-  \\
     &$61\times61$ &1.06E-03 &2.91 &1.07E-03 &3.06 &1.89E-03 &3.10 \\
     &$121\times121$ &1.36E-04&2.95 &1.54E-04 &2.80 &3.79E-04 &2.32\\
     &$241\times241$ &1.61E-05 &3.08 &1.69E-05 &3.18 &3.37E-05 &3.49 \\ \hline
        \multirow{4}{*}{$\polQ_3$} 
     &$31\times31$ &2.67E-03 &- &2.52E-03 &- &3.33E-03 &-  \\
     &$61\times61$ &2.23E-04 &3.58 &2.30E-04 &3.46 &3.94E-04 &3.08 \\
     &$121\times121$ &1.42E-05 &3.97 &1.48E-05 &3.95 &2.91E-05 &3.76\\
     &$241\times241$ &9.02E-07 &3.98 &9.38E-07 &3.98 &1.89E-06 &3.94 \\ \hline
    \end{tabular}
  \end{center}
\end{table}

\begin{figure}[htbp]
    \centering
    \begin{subfigure}[b]{0.5\textwidth}
        \centering
        \includegraphics[width=\linewidth]{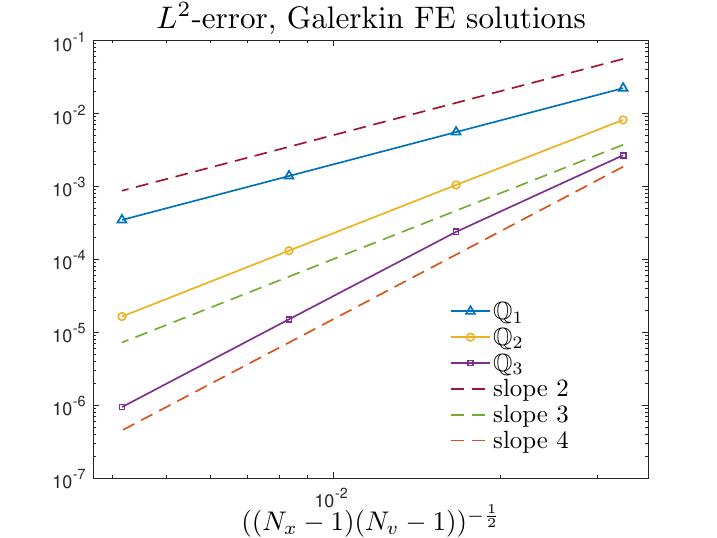}
    \end{subfigure}\hfill
    \begin{subfigure}[b]{0.5\textwidth}
        \centering
        \includegraphics[width=\linewidth]{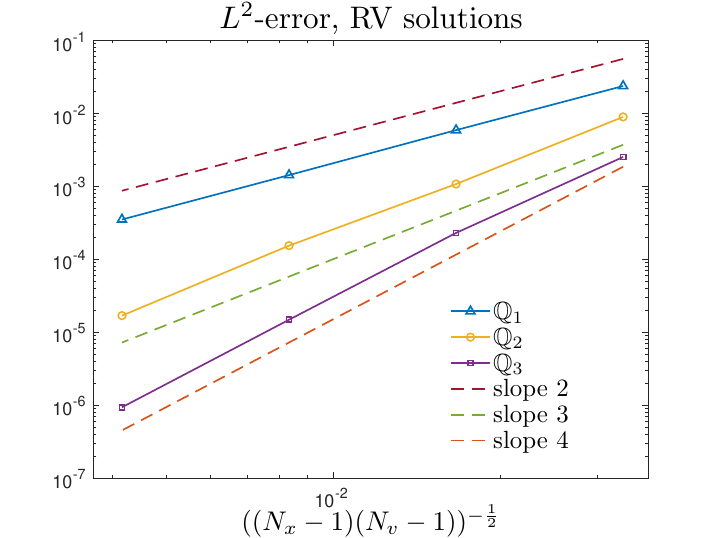} 
    \end{subfigure}\hfill
    \caption{Two-stream instability: convergence orders of the methods. The RV method does not decrease the accuracy of high-order solutions. Run until $t=5$ and then back to $t=10$.}
    \label{fig:convergence}
\end{figure}

\begin{figure}[htbp]
    \centering
    \begin{subfigure}[b]{0.5\textwidth}
      \includegraphics[width=\linewidth]{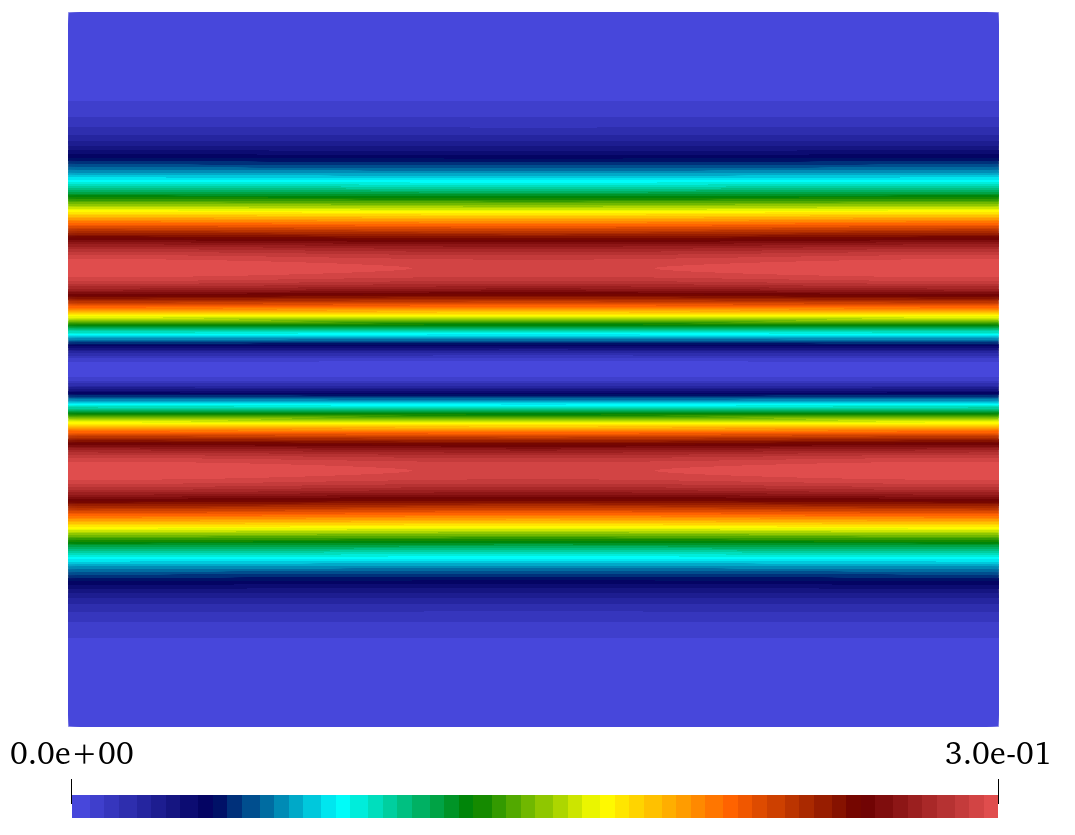}
      \caption{$t=0$}
    \end{subfigure}\hfill
    \begin{subfigure}[b]{0.5\textwidth}
      \includegraphics[width=\linewidth]{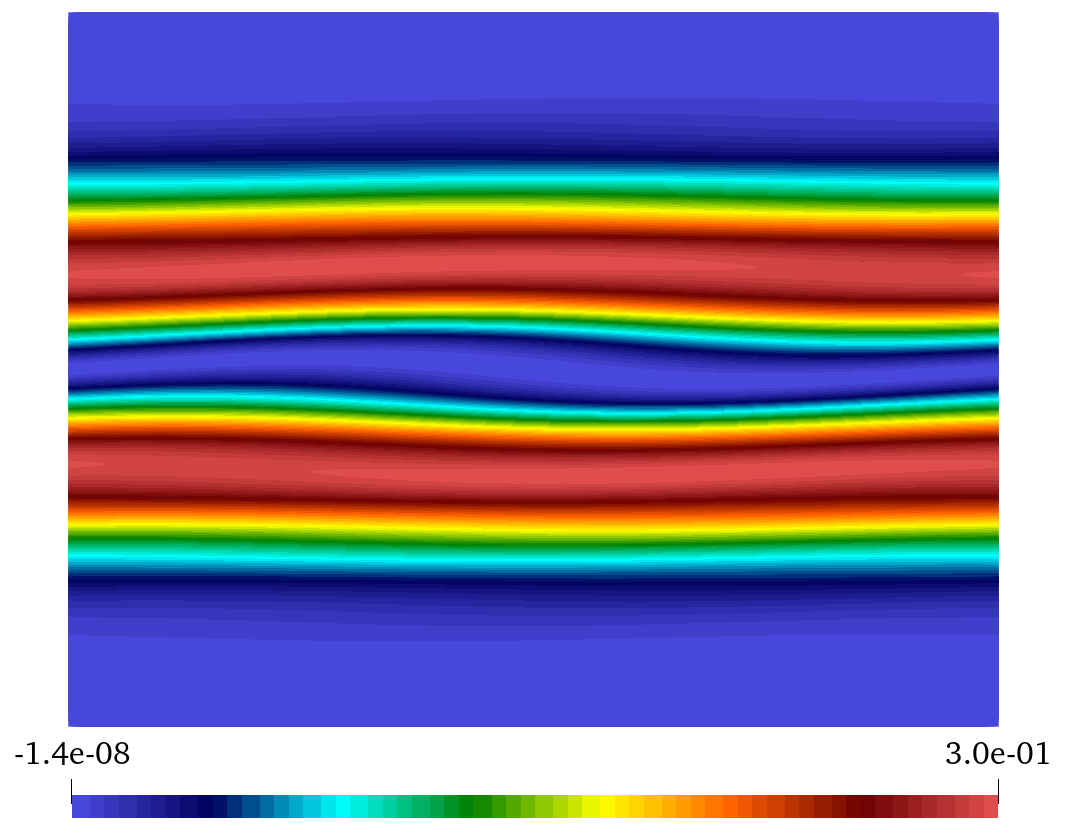}
      \caption{$t=12$}
    \end{subfigure}\hfill
    \centering
    \begin{subfigure}[b]{0.5\textwidth}
      \includegraphics[width=\linewidth]{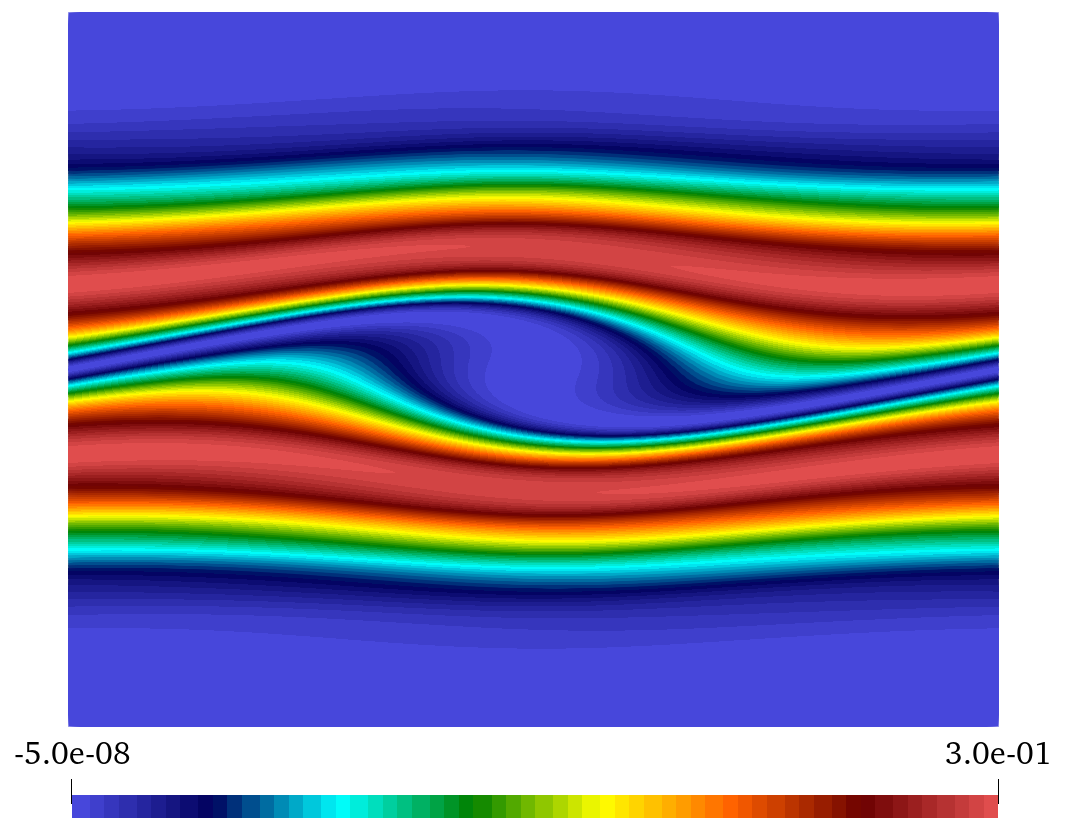}
      \caption{$t=18$}
    \end{subfigure}\hfill
    \begin{subfigure}[b]{0.5\textwidth}
      \includegraphics[width=\linewidth]{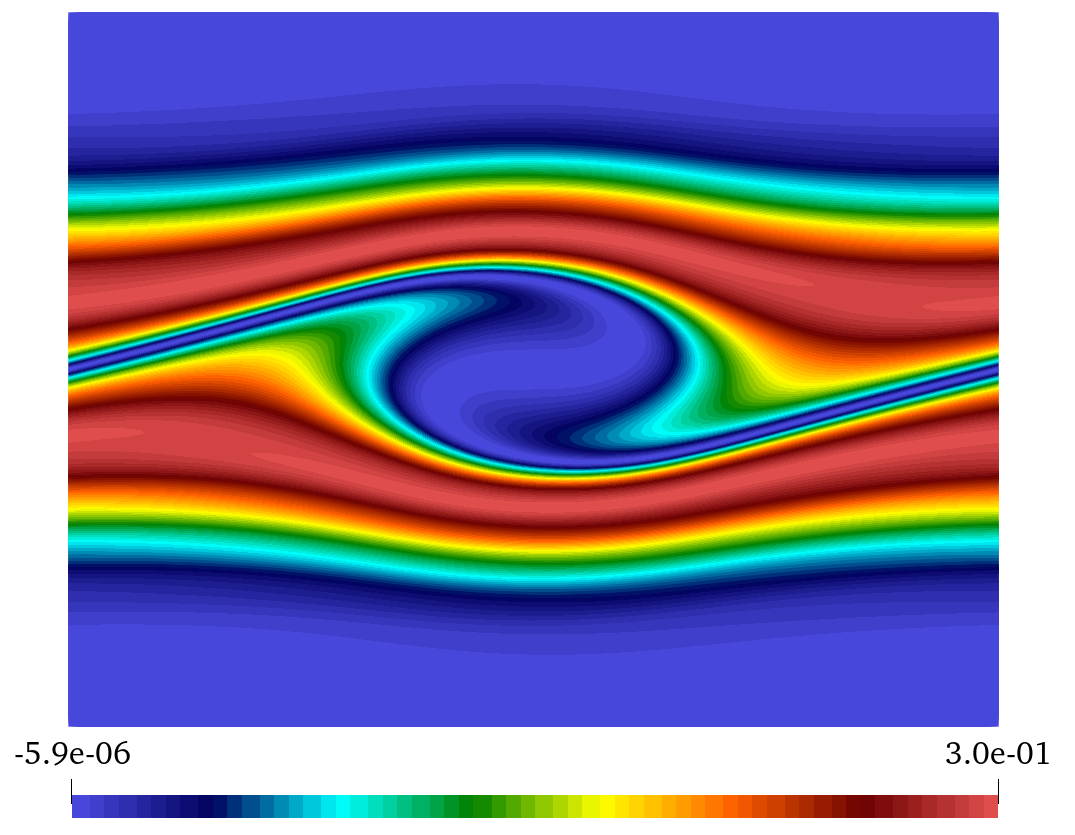}
      \caption{$t=21$}
    \end{subfigure}\hfill
    \centering
    \begin{subfigure}[b]{0.5\textwidth}
      \includegraphics[width=\linewidth]{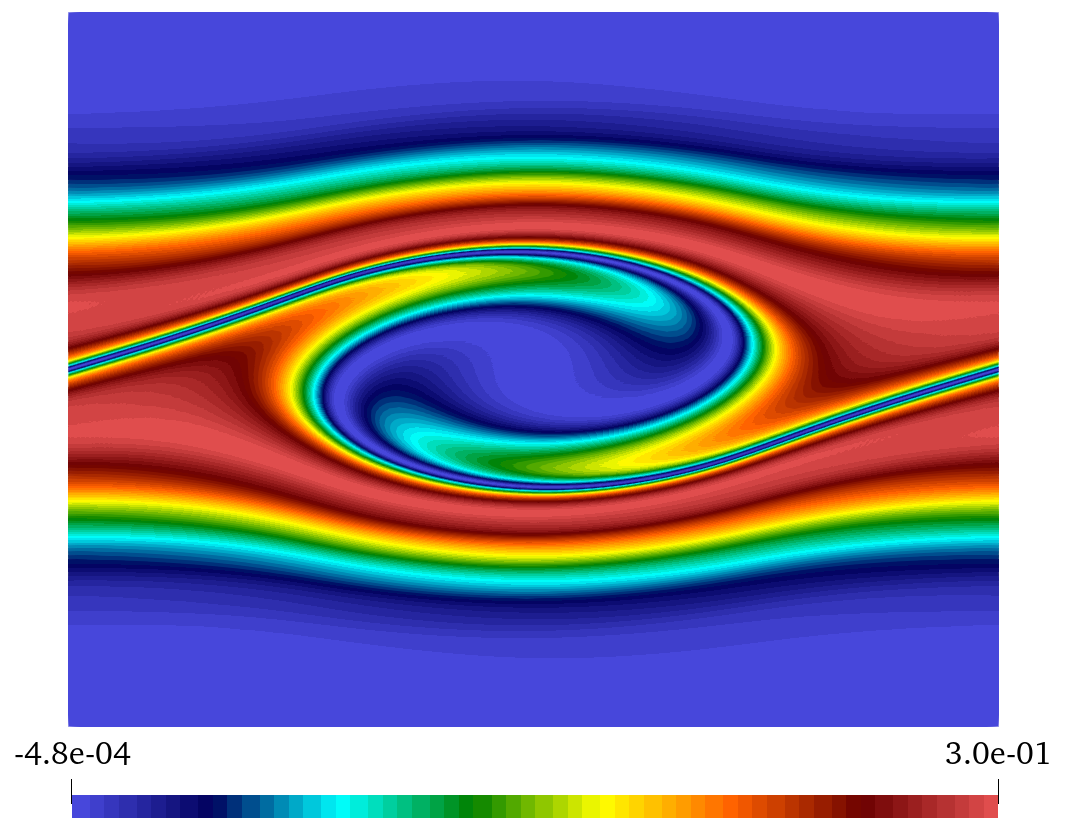}
      \caption{$t=24$}
    \end{subfigure}\hfill
    \begin{subfigure}[b]{0.5\textwidth}
      \includegraphics[width=\linewidth]{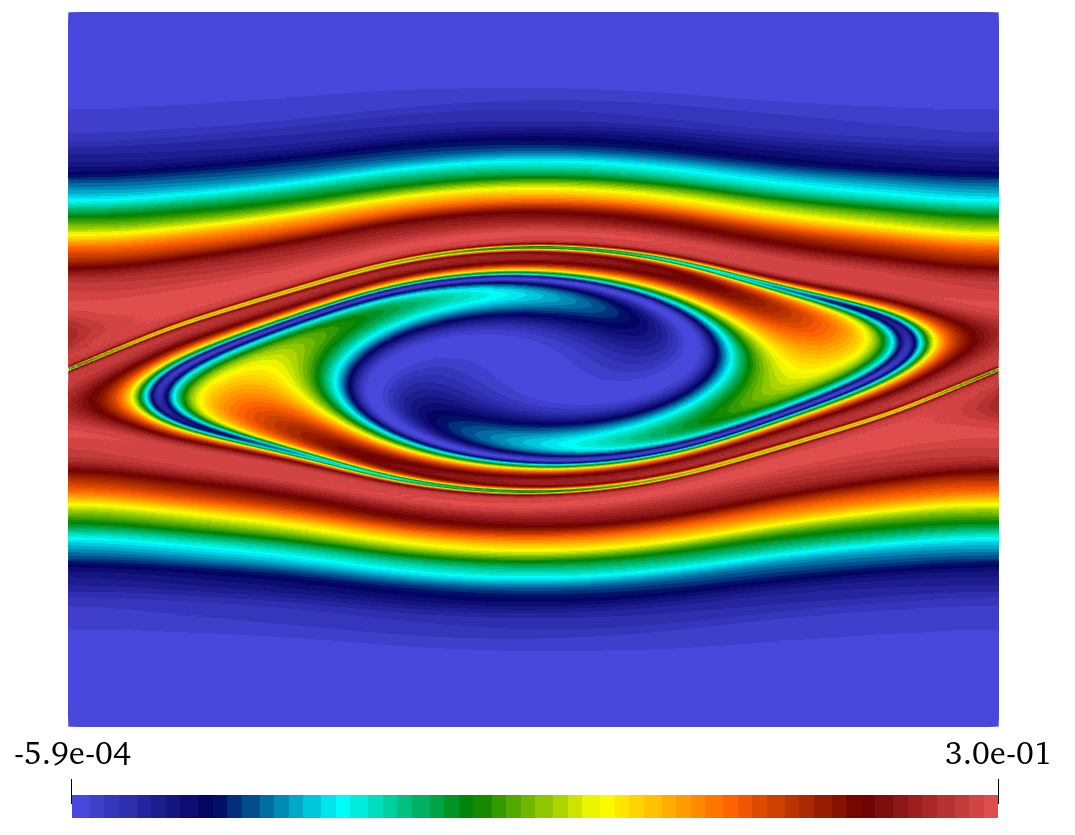}
      \caption{$t=30$}
    \end{subfigure}\hfill
    \caption{Two-stream instability: $\polQ_3$ solutions at $t=0, 12, 18, 21, 24,$ and $30$. The functions are plotted at the nodal points, $N_x\times N_v = 385\times769$.}
    \label{fg:B2_fh}
\end{figure}

\begin{figure}[htbp]
  \centering
  \begin{subfigure}[b]{0.48\textwidth}
    \includegraphics[width=\linewidth]{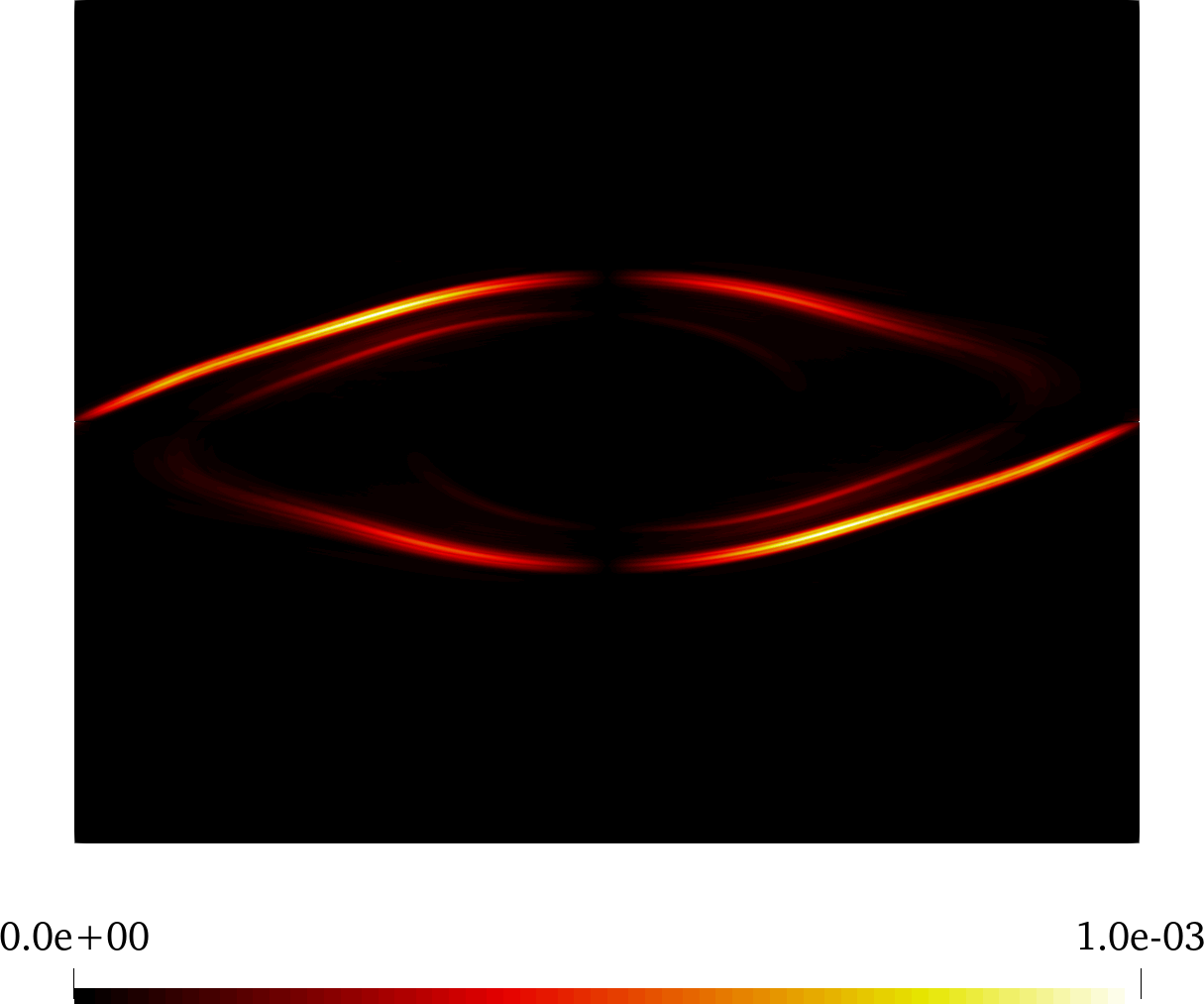}
    \caption{$\polQ_1$ viscosity $\varepsilon_x$}
  \end{subfigure}\hspace{5pt} 
  \begin{subfigure}[b]{0.48\textwidth}
    \includegraphics[width=\linewidth]{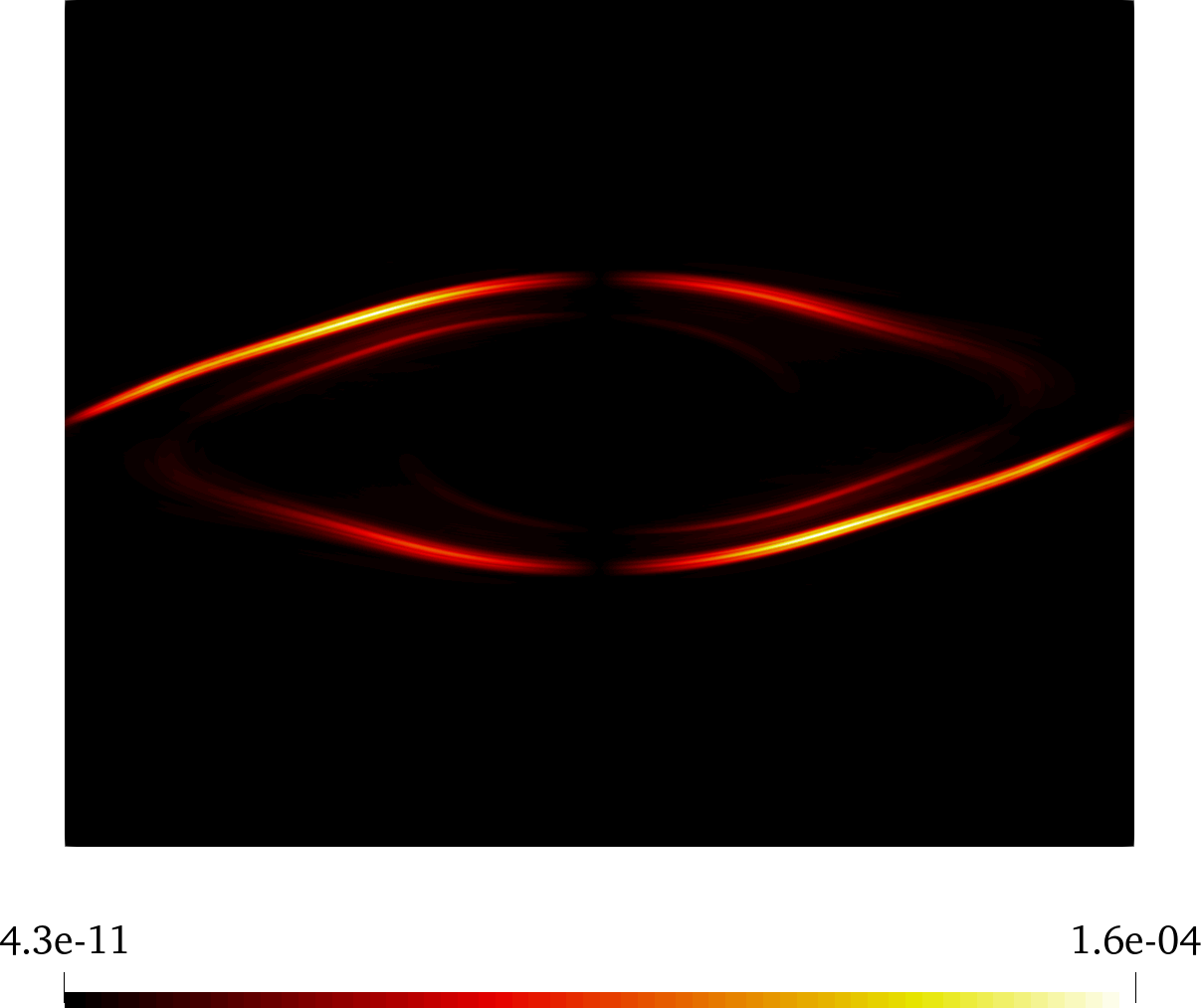}
    \caption{$\polQ_1$ viscosity $\varepsilon_v$}
  \end{subfigure}\hfill
  \centering
  \begin{subfigure}[b]{0.48\textwidth}
    \includegraphics[width=\linewidth]{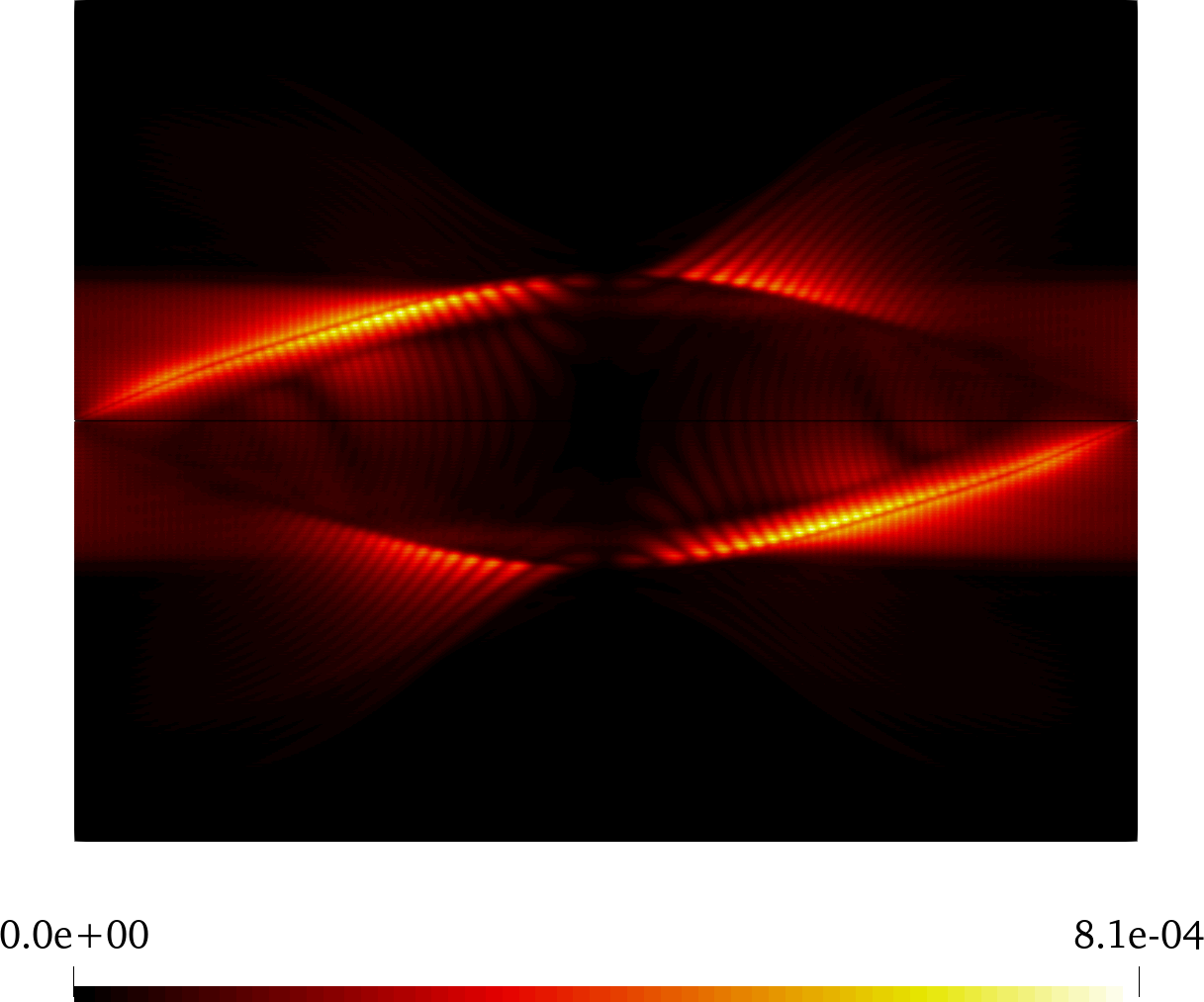}
    \caption{$\polQ_3$ viscosity $\varepsilon_x$}
  \end{subfigure}\hspace{5pt} 
  \begin{subfigure}[b]{0.48\textwidth}
    \includegraphics[width=\linewidth]{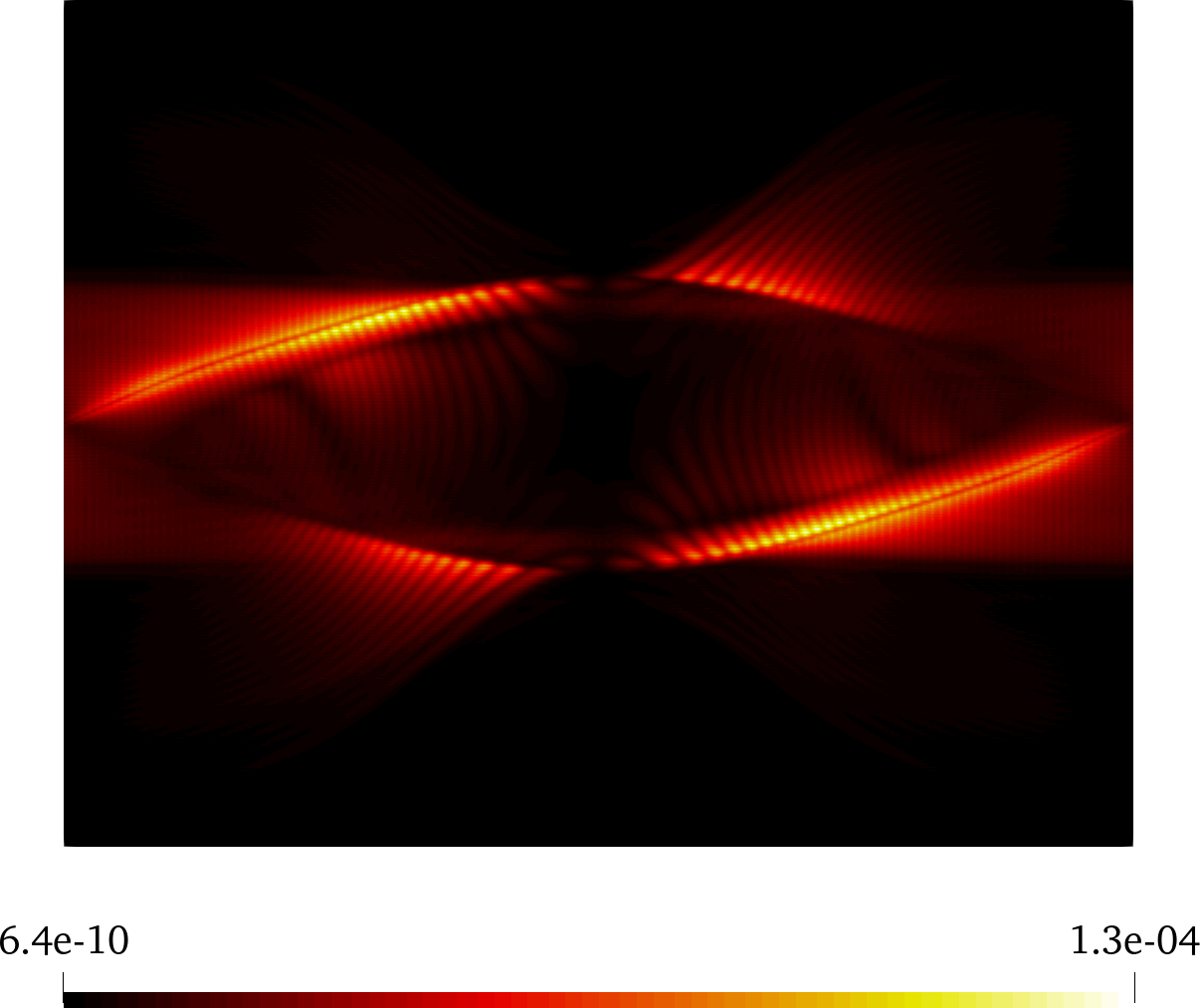}
    \caption{$\polQ_3$ viscosity $\varepsilon_v$}
  \end{subfigure}\hfill
  \caption{Two-stream instability: $\polQ_1$ and $\polQ_3$ artificial viscosity at time $t=30$. The functions are plotted at the nodal points, $N_x\times N_v = 385\times769$.}
\label{fg:B2_mu}
\end{figure}

We also apply our methods to a multi-vortex two-stream instability, where the solutions are expected to develop complex vortex structures. This allows us to evaluate the capability of the methods in capturing these structures and the long-term performance of the methods.
We use the following initial distribution function 
\begin{equation}
  f(x,v,0)=\frac{1}{2v_{th}\sqrt{2\pi}}\left({\rm exp}\left(-\frac{(v-u)^2}{2v^2_{th}}\right)+{\rm exp}\left(-\frac{(v+u)^2}{2v^2_{th}}\right)\right)(1+\alpha{\rm cos}(\theta x)), \notag
\end{equation}
with $u=0.99$, $v_{th}=0.3$, $\alpha = 0.05$, and $\theta = \frac{2}{13}$, in the phase space $\Omega:=[0,13\pi]\times[-5,5]$.
The $\polQ_3$ solutions in the mesh consisting of $128\times 256$ elements are plotted in Figure \ref{fg:B4_fh}.
In our simulations, we observe the formation of multiple vortices, which eventually merge and evolve into a fewer number of vortices in the final time stage. This test case has been studied in many other works,  and our long-term observations are consistent with the results reported in \cite{MR4844786,MR2586230,Umeda2008773}.
Furthermore, our numerical solutions at $t=40$ and $t=70$ closely match those presented in \cite[Fig.3.13.]{MR3738121} and \cite[Fig.6.6.]{MR2843721}, respectively. In particular, we observe the same complex structures as reported in these references, further validating the accuracy of our simulations.

\begin{figure}[htbp]
  \centering
  \begin{subfigure}[b]{0.5\textwidth}
    \includegraphics[width=\linewidth]{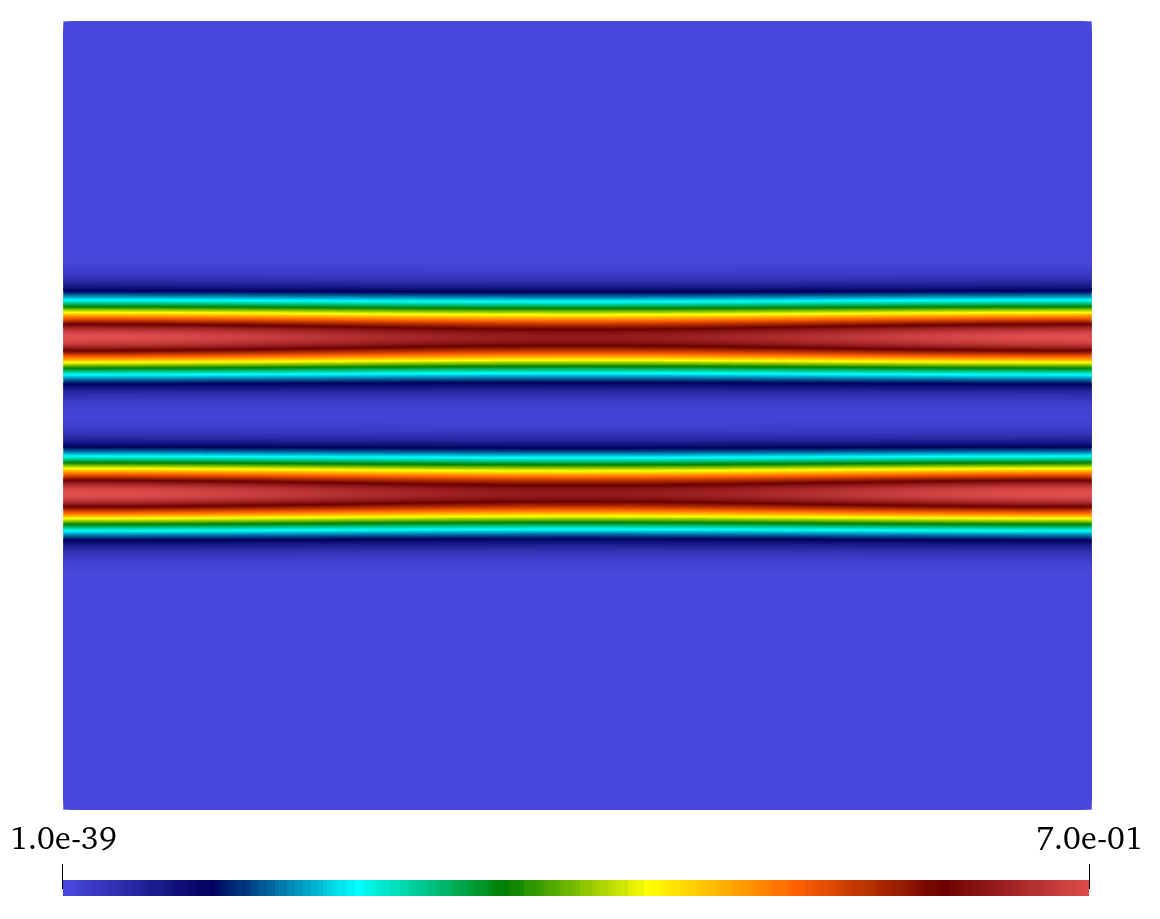}
    \caption{$t=0$}
  \end{subfigure}\hfill
  \begin{subfigure}[b]{0.5\textwidth}
    \includegraphics[width=\linewidth]{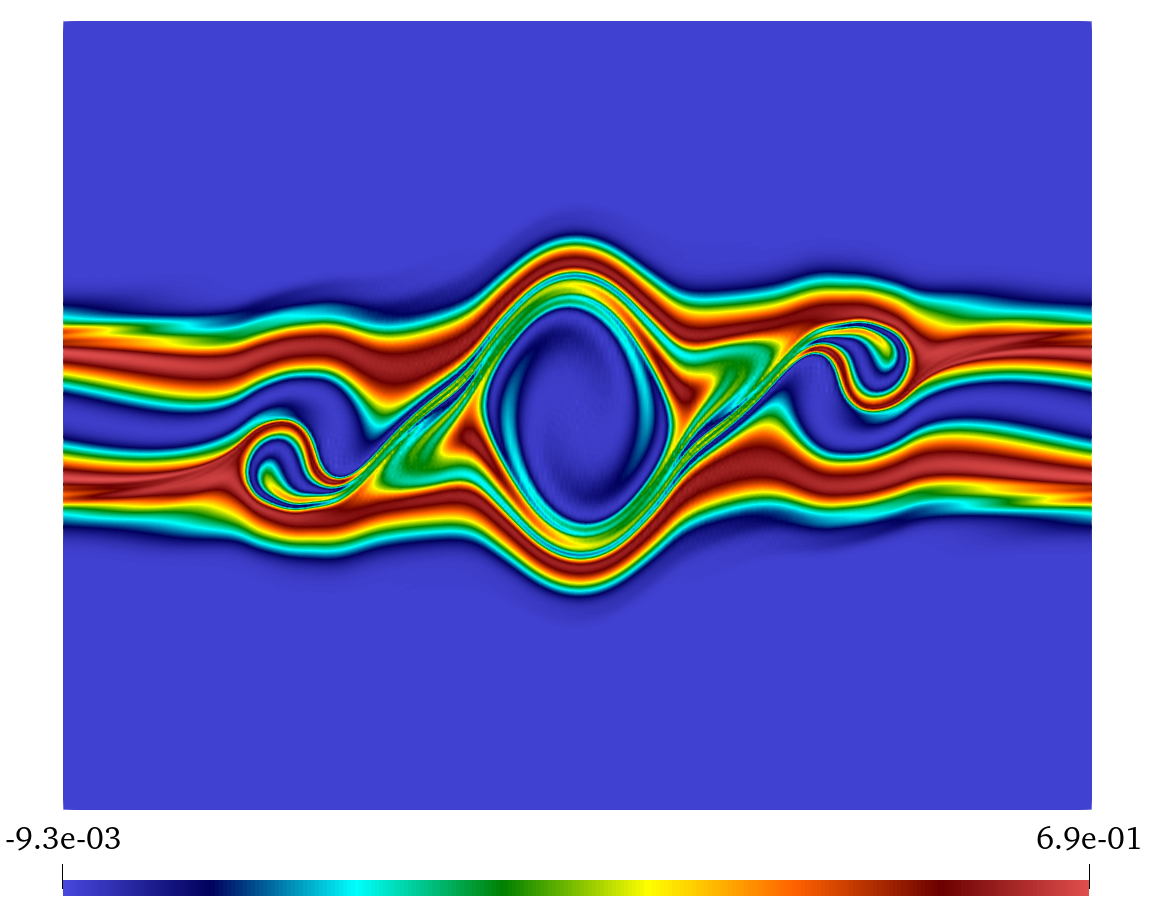}
    \caption{$t=40$}
  \end{subfigure}\hfill
  \centering
  \begin{subfigure}[b]{0.5\textwidth}
    \includegraphics[width=\linewidth]{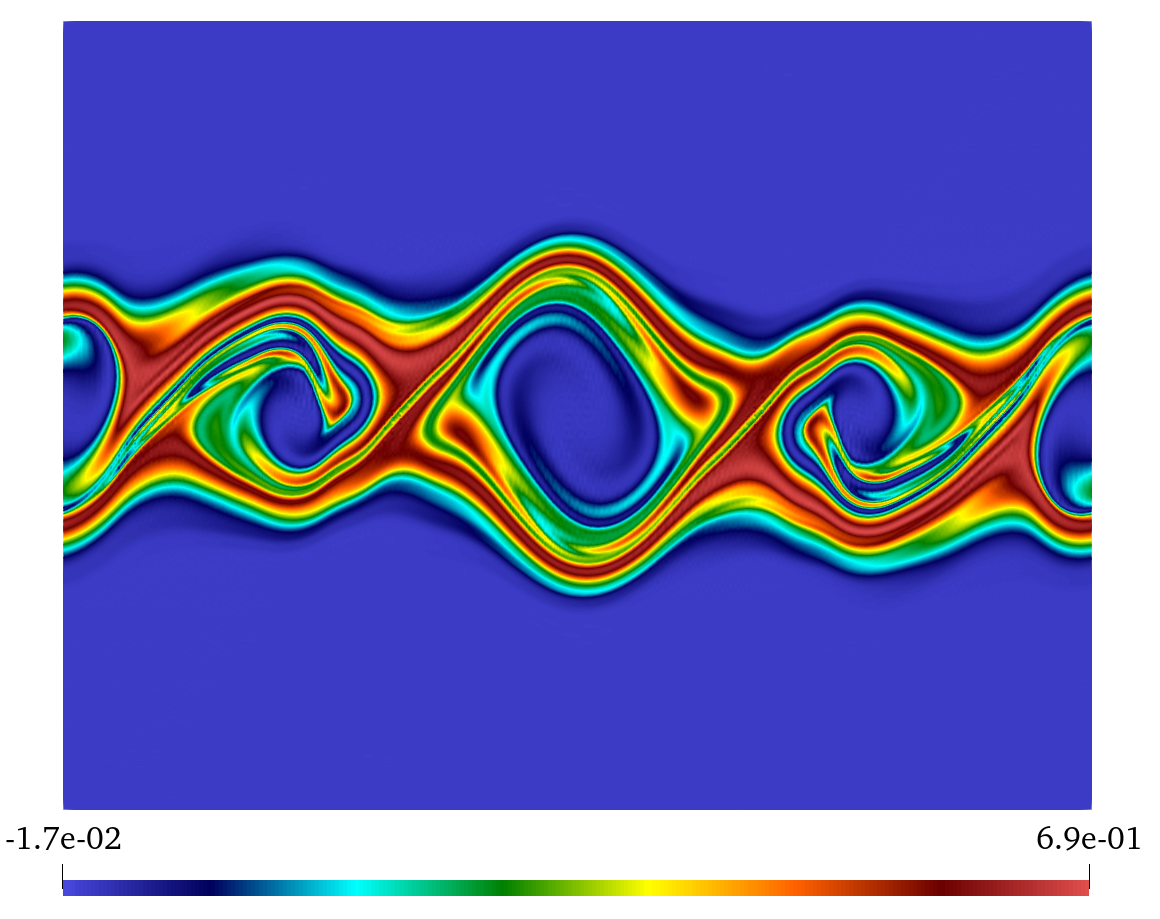}
    \caption{$t=50$}
  \end{subfigure}\hfill
  \begin{subfigure}[b]{0.5\textwidth}
    \includegraphics[width=\linewidth]{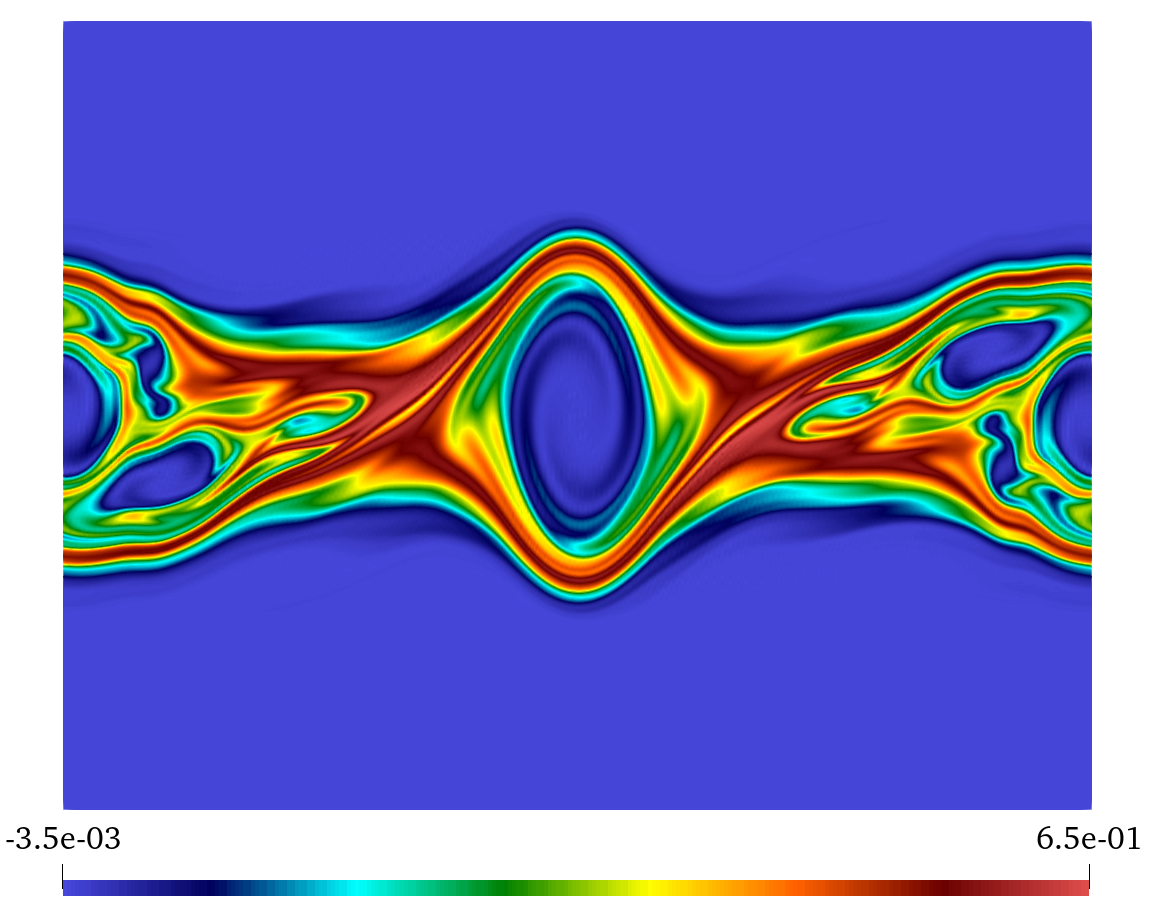}
    \caption{$t=70$}
  \end{subfigure}\hfill
  \centering
  \begin{subfigure}[b]{0.5\textwidth}
    \includegraphics[width=\linewidth]{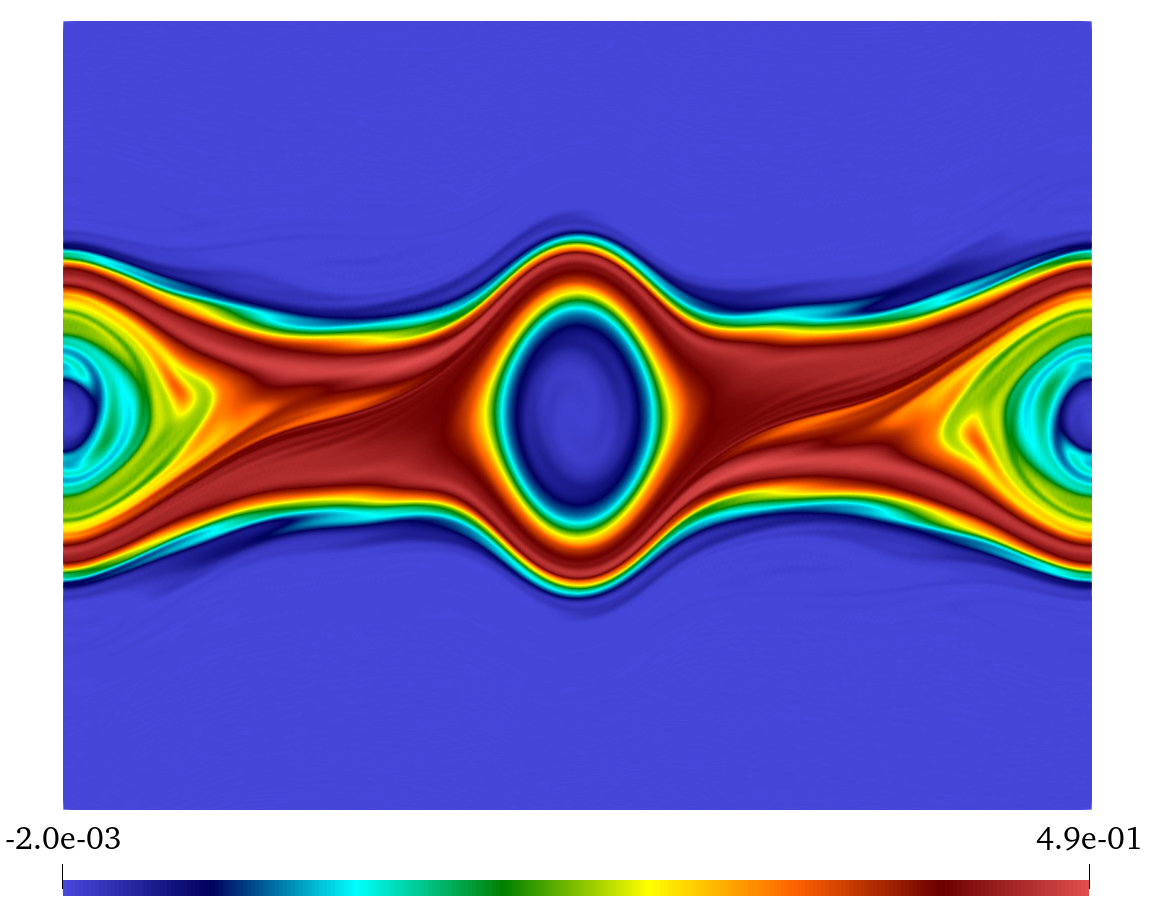}
    \caption{$t=200$}
  \end{subfigure}\hfill
  \begin{subfigure}[b]{0.5\textwidth}
    \includegraphics[width=\linewidth]{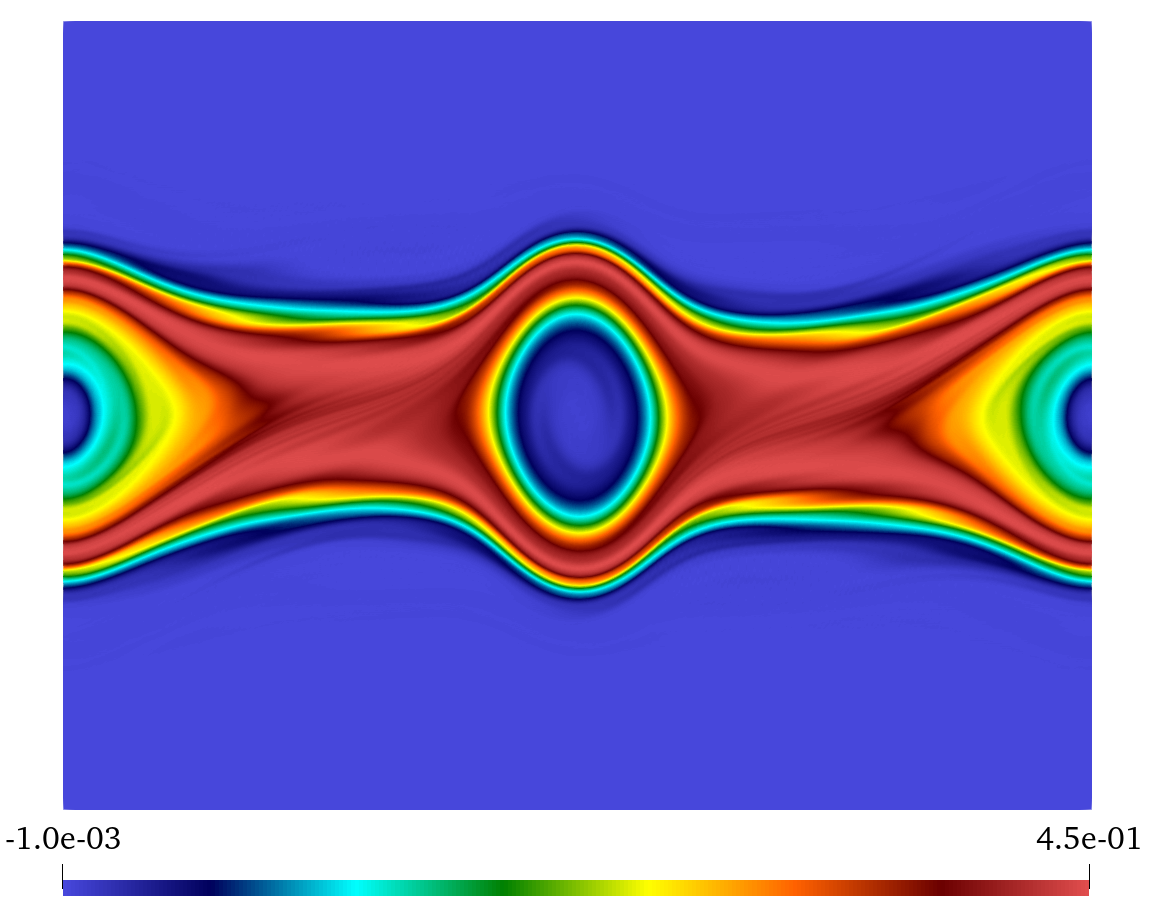}
    \caption{$t=400$}
  \end{subfigure}\hfill
  \caption{Multi-vortex two-stream instability: $\polQ_3$ solutions at $t=0, 40, 50, 70, 200,$ and $400$. The functions are plotted at the nodal points, $N_x\times N_v = 385\times769$.}
  \label{fg:B4_fh}
\end{figure}

\subsection{Bump-on-tail instability} 
Next, we investigate the bump-on-tail instability of Vlasov-Poisson equations. Consider the following initial data
\begin{equation}
  f(x,v,0)=d_0(v)(1+\alpha{\rm cos}(\theta x)),\notag
\end{equation}
in the phase space $\Omega := [0,20\pi]\times[-8,8]$, where
\begin{equation}
    d_0(v)=\frac{1}{\sqrt{2\pi}}\left(0.9{\rm exp}\left(-\frac{v^2}{2}\right)+0.2{\rm exp}(-2(v-4.5)^2)\right),\notag
\end{equation}
and $\alpha=0.04$, $\theta=0.3$. 

The $\polQ_3$ solutions with the mesh consisting of $128 \times 256$ elements are depicted in Figure \ref{fg:B3_fh}. There are two components with different thicknesses in the phase space. It is also observed that three swirls are expanding and embracing each other, and the evolution is periodic along the horizontal
direction.
\begin{figure}[htbp]
    \centering
    \begin{subfigure}[b]{0.5\textwidth}
      \includegraphics[width=\linewidth]{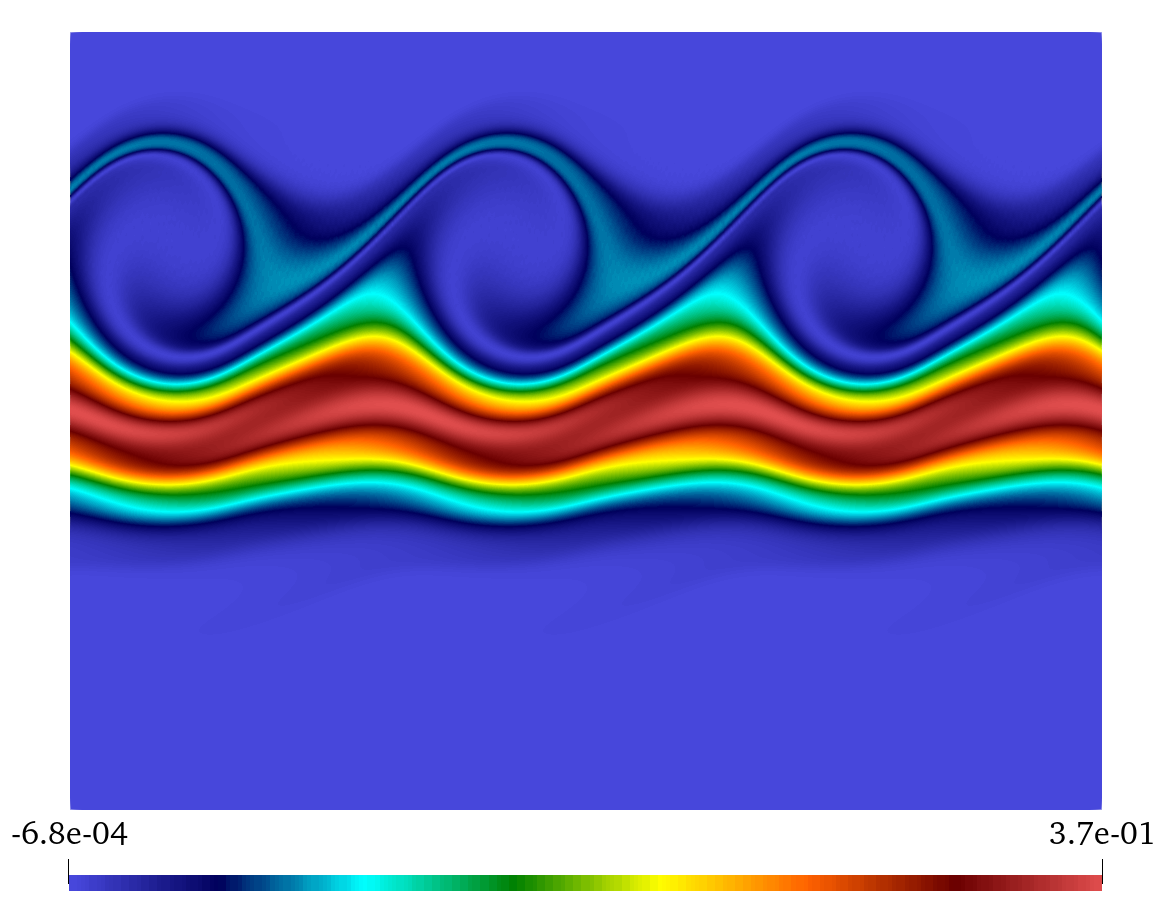}
      \caption{$t=18$}
    \end{subfigure}\hfill
    \begin{subfigure}[b]{0.5\textwidth}
      \includegraphics[width=\linewidth]{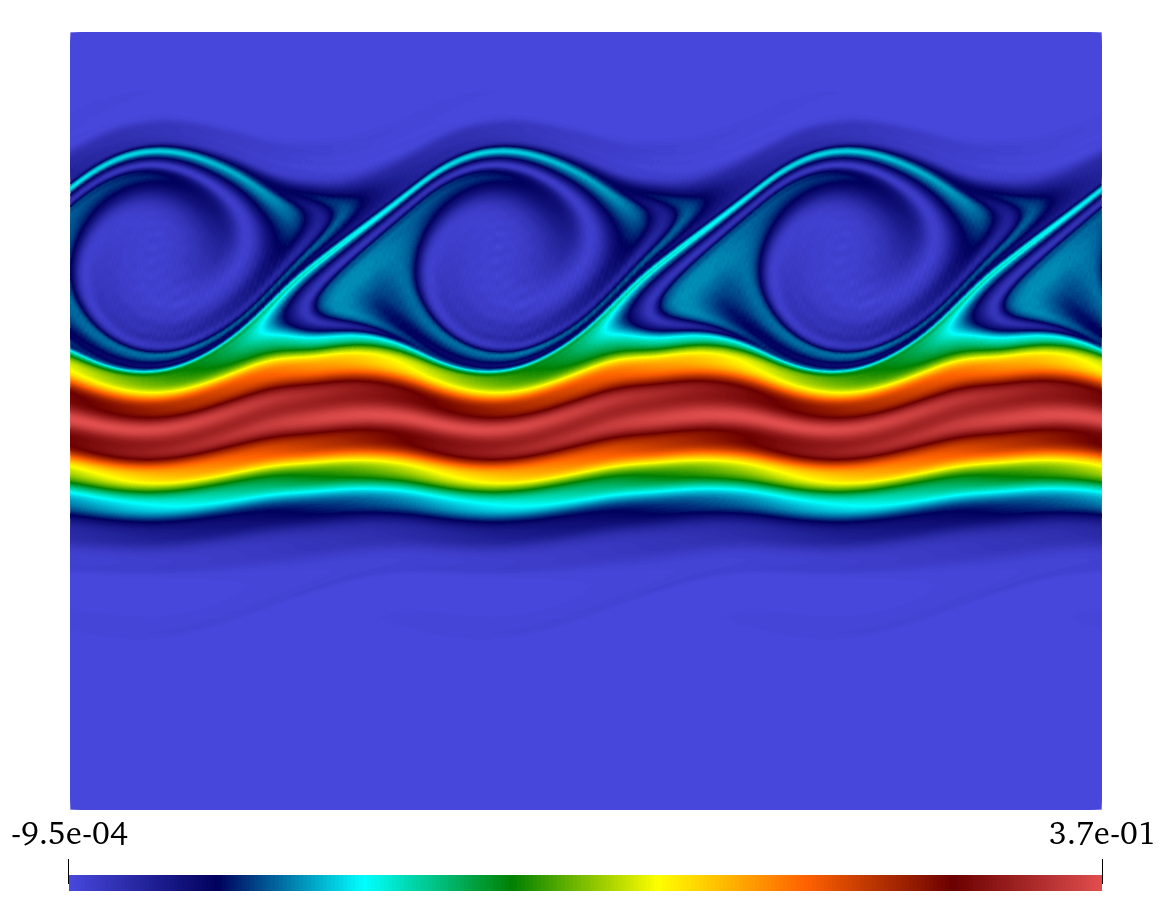}
      \caption{$t=30$}
    \end{subfigure}\hfill
    \centering
    \begin{subfigure}[b]{0.5\textwidth}
      \includegraphics[width=\linewidth]{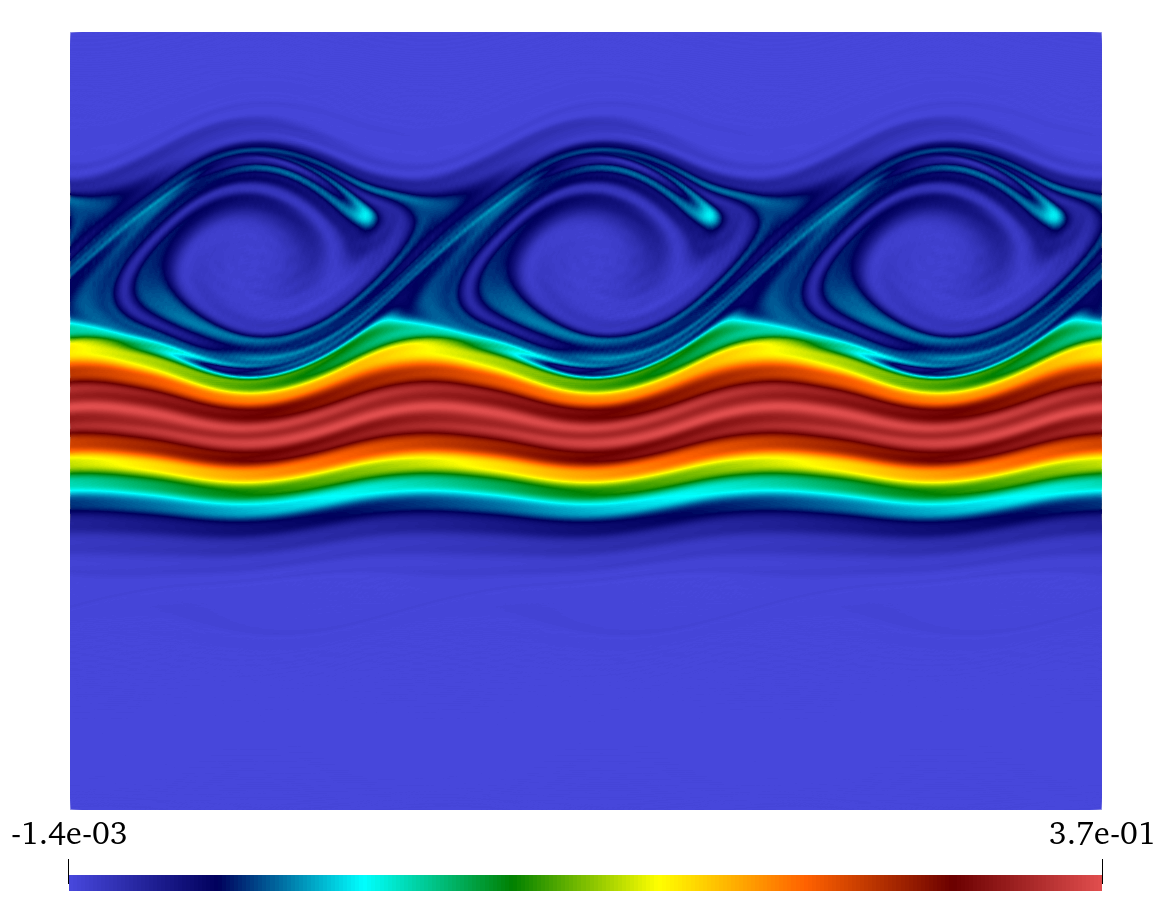}
      \caption{$t=50$}
    \end{subfigure}\hfill
    \begin{subfigure}[b]{0.5\textwidth}
      \includegraphics[width=\linewidth]{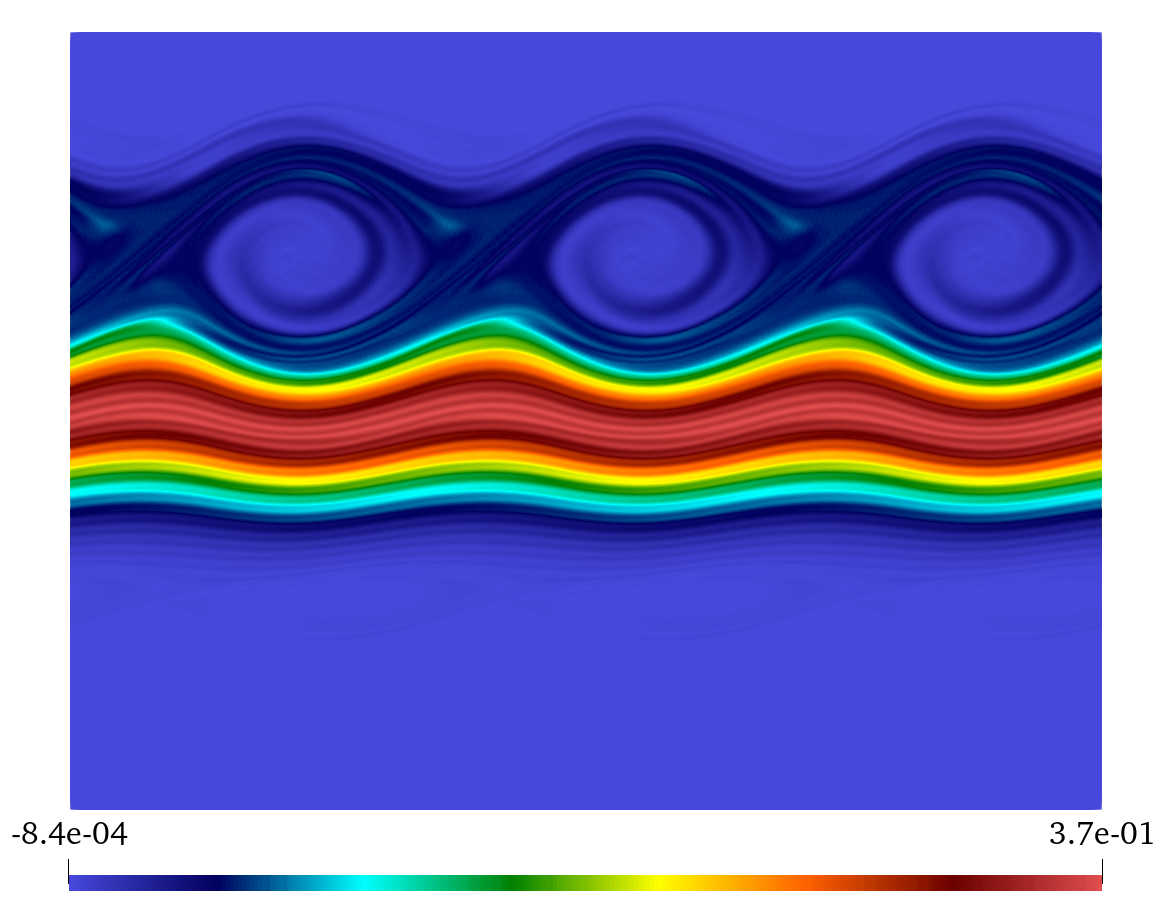}
      \caption{$t=100$}
    \end{subfigure}\hfill
    \centering
    \begin{subfigure}[b]{0.5\textwidth}
      \includegraphics[width=\linewidth]{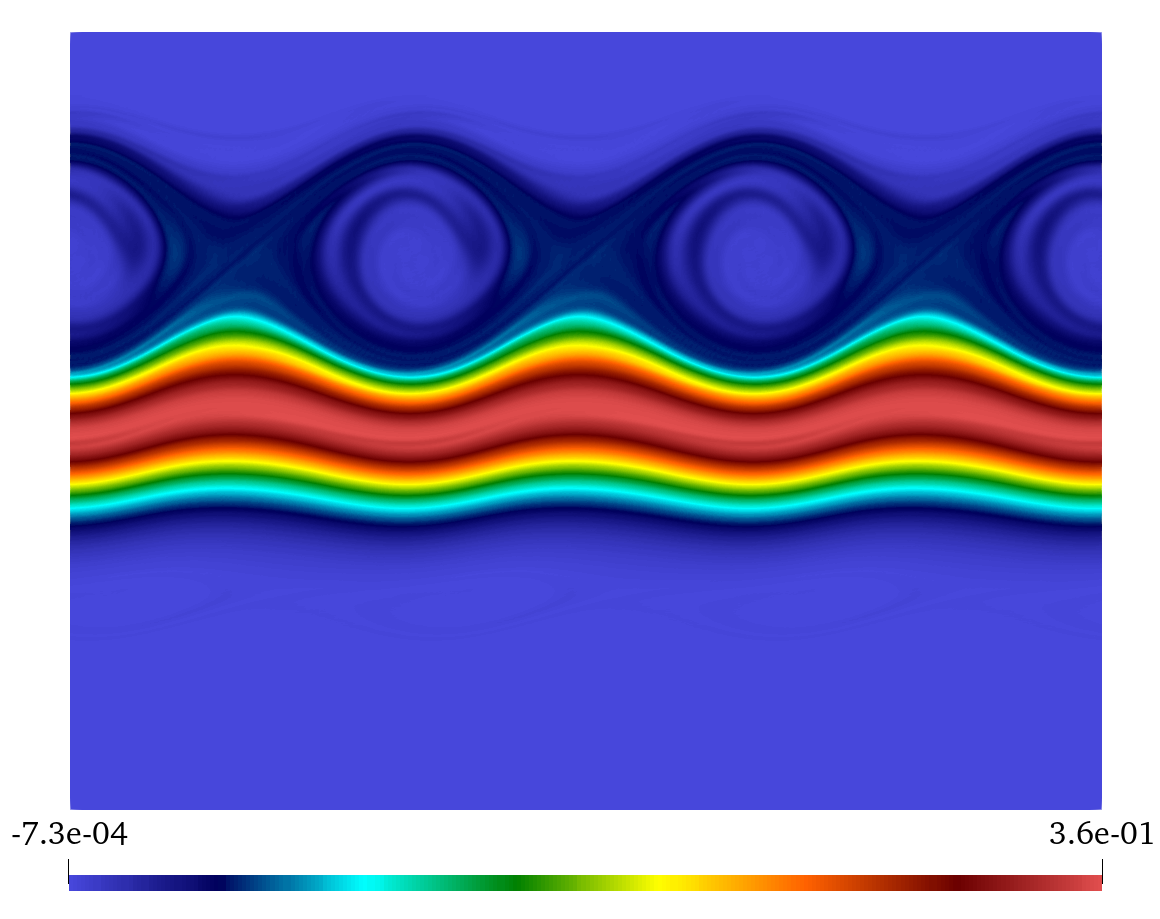}
      \caption{$t=200$}
    \end{subfigure}\hfill
    \begin{subfigure}[b]{0.5\textwidth}
      \includegraphics[width=\linewidth]{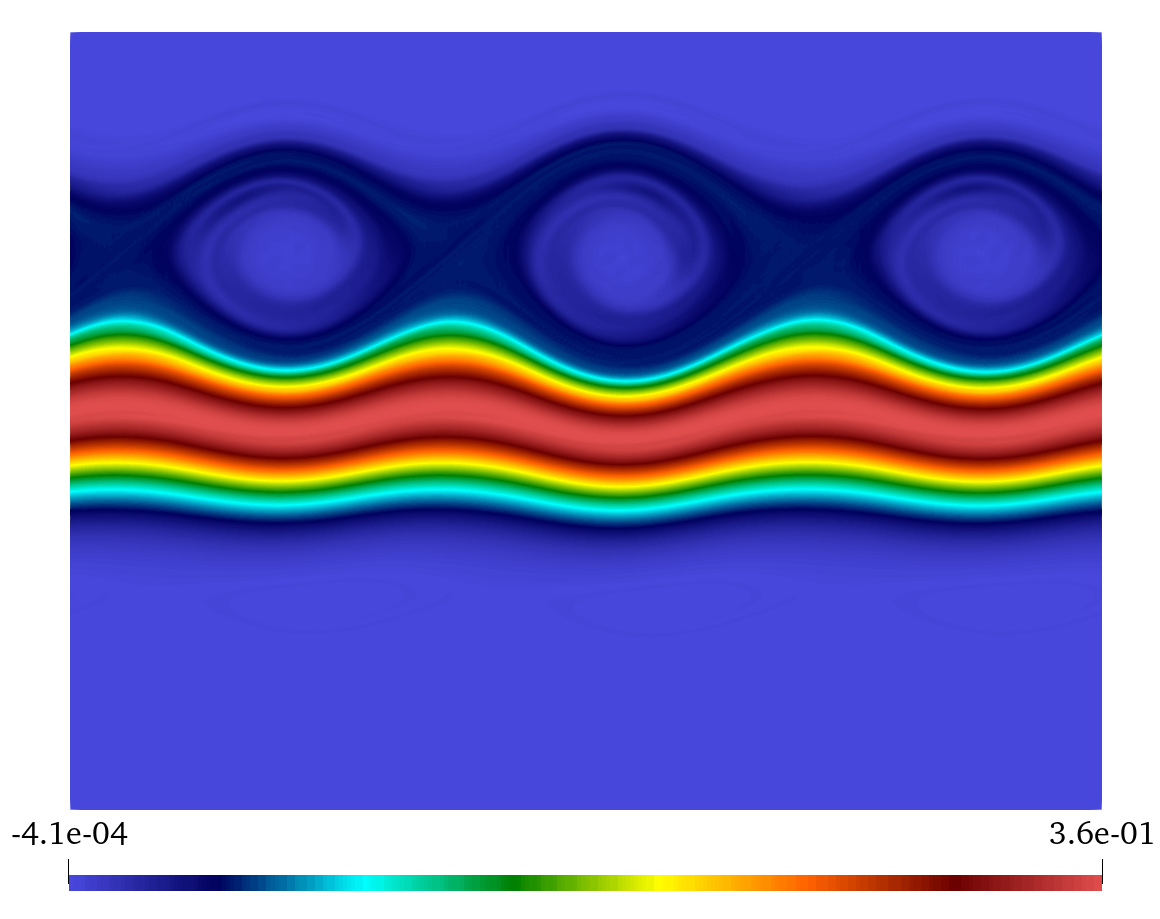}
      \caption{$t=400$}
    \end{subfigure}\hfill
    \caption{Bump-on-tail instability: $\polQ_3$ solutions at $t=18, 30, 50, 100, 200,$ and $400$. The functions are plotted at the nodal points, $N_x\times N_v = 385\times769$.}
    \label{fg:B3_fh}
\end{figure}

In addition to observing the evolution of the distribution function, it is crucial to verify that our approximations maintain the conservative properties of the system, as outlined in Proposition~\ref{prop:cons}. In Proposition~\ref{prop:conservation} we showed that mass conservation holds true for forward Euler scheme. We now verify that the total mass is conservative; furthermore, for the other properties, we expect that the deviation can be reduced through some approaches. For the $\polQ_3$ solutions, we record the values of the total mass, momentum, energy, and $L^2$-norm of $f_h$ in 
$\Omega$. For each quantity, we define its deviation as the difference from the initial value divided by the initial value. The total mass is conserved regarding some round off errors; the magnitudes of the mass deviations are extremely small, as shown in Table \ref{tab:mass}. Plot the time evolution of the deviations of the other properties in Figure \ref{fg:conserv}. the deviations of the momentum, energy, and the $L^2$-norm in our results are comparable in magnitude to those in \cite{MR2843721}, and we can minimize these deviations through mesh refinement.
\begin{figure}[htbp]
  \centering
  \begin{subfigure}[b]{0.5\textwidth}
    \includegraphics[width=\linewidth]{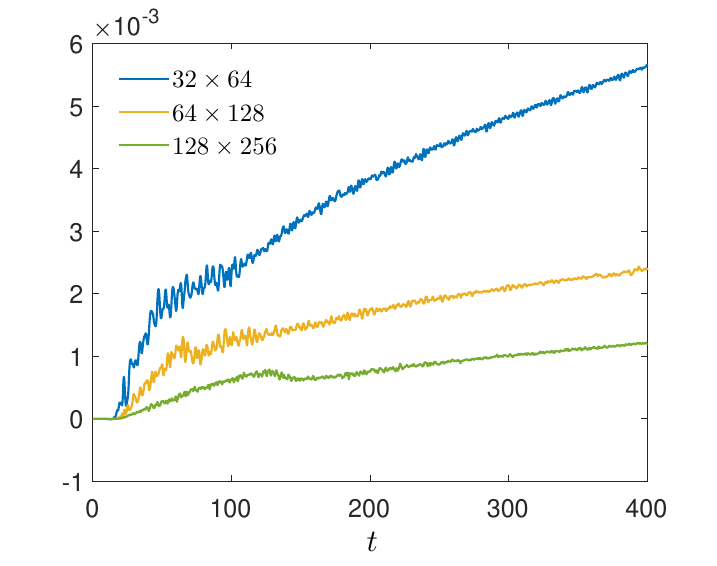}
    \caption{momentum}
  \end{subfigure}\hfill
  \begin{subfigure}[b]{0.5\textwidth}
    \includegraphics[width=\linewidth]{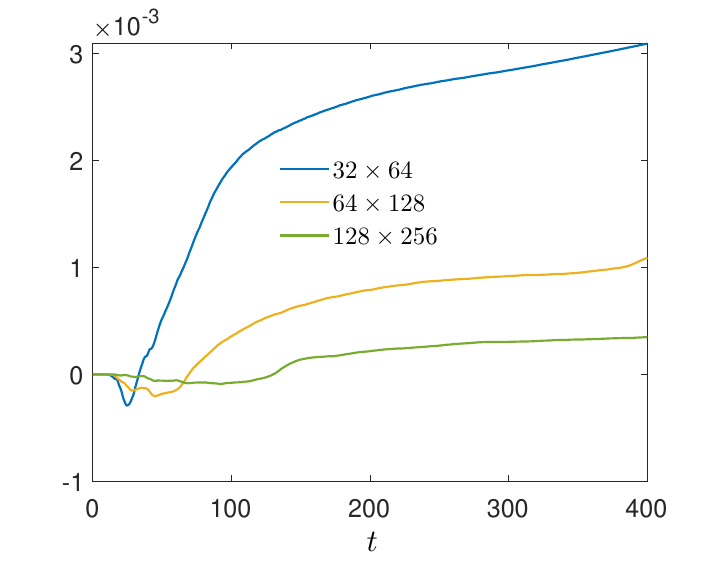} 
    \caption{energy}
  \end{subfigure}\hfill
  \begin{subfigure}[b]{0.5\textwidth}
    \includegraphics[width=\linewidth]{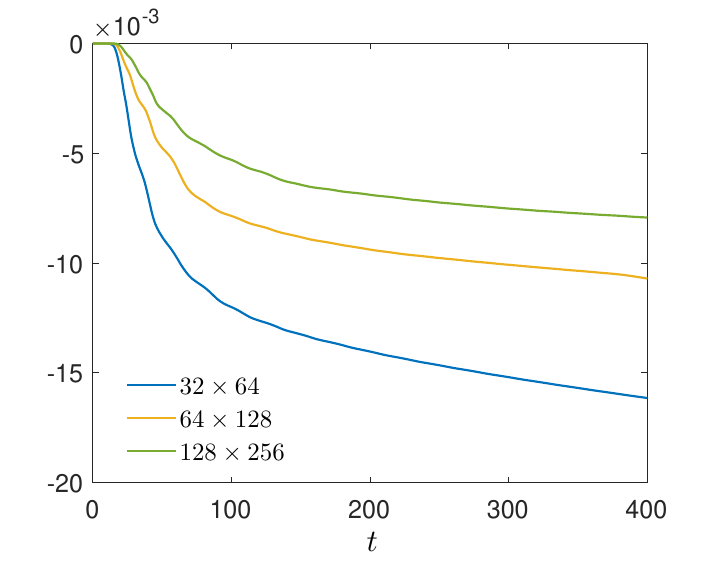} 
    \caption{$\Vert f_h\Vert_{L^2(\Omega)}$}
  \end{subfigure}\hfill
  \caption{Bump-on-tail instability: time evolution of the deviations when different DOFs are used. The element is $\polQ_3$ and we choose three grids of increasing density.}
  \label{fg:conserv}
\end{figure}

Finally, we solve this problem using two different viscosity coefficients, as discussed in Remark~\ref{remark:isotropic}. A mesh of $50 \times 300$ elements is employed, using the $\polQ_3$ polynomial space. The results at the final time are shown in Figure~\ref{fg:B3_min_fh}. It is evident that when the anisotropy ratio in the mesh is too large, the isotropic viscosity fails to provide sufficient damping for the spurious oscillations that develop in the solution. This clearly demonstrates the advantages of the anisotropic viscosity method proposed in this paper.
 
\begin{figure}[htbp]
  \centering
  \begin{subfigure}[b]{0.4\textwidth}
    \includegraphics[width=\linewidth]{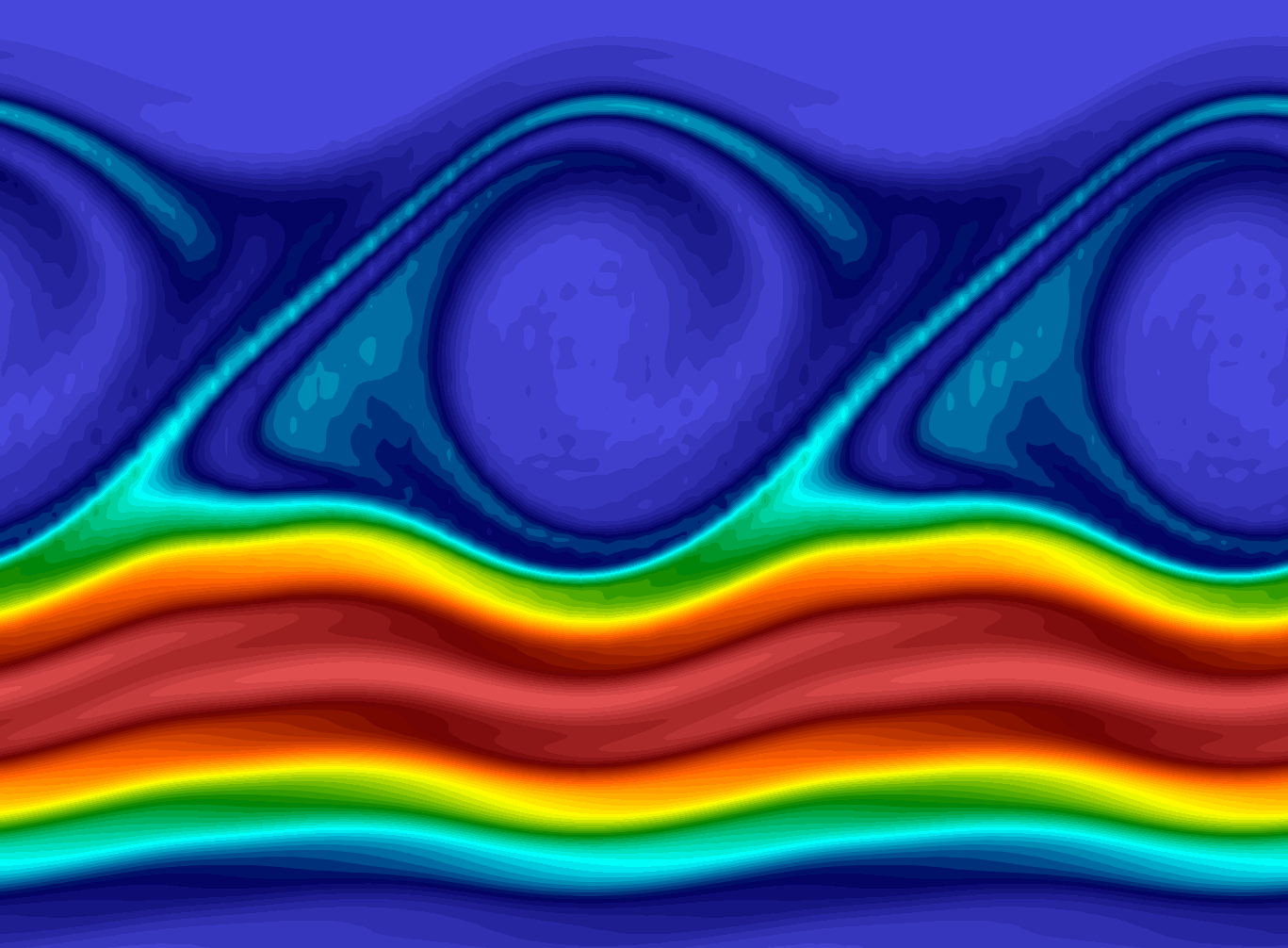}
    \caption{Anisotropic viscosity}
  \end{subfigure}
  \hspace{0.2in}  
  \begin{subfigure}[b]{0.4\textwidth}
    \includegraphics[width=\linewidth]{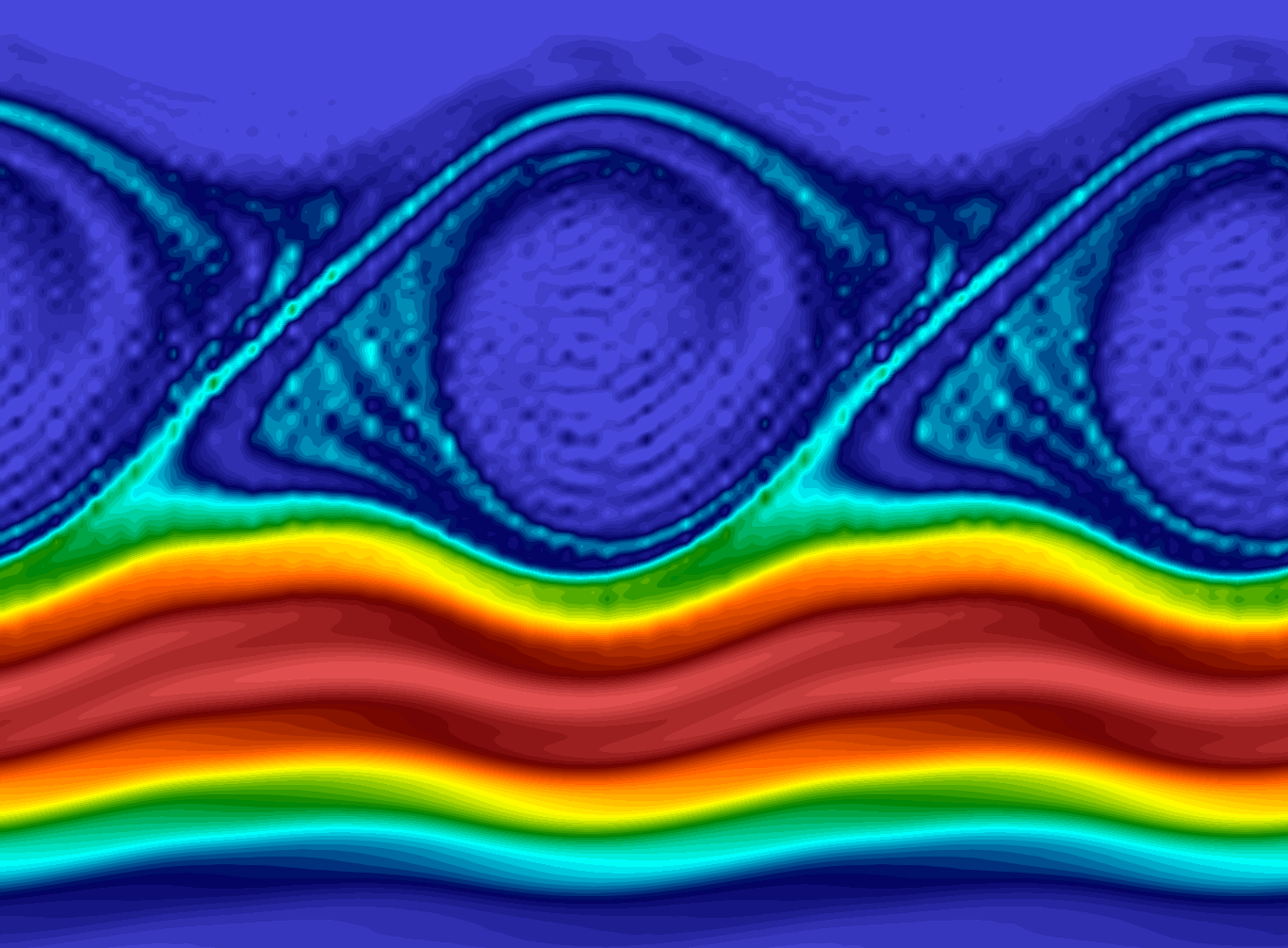}
    \caption{Isotropic viscosity}
  \end{subfigure}
  \caption{Bump-on-tail instability: $\polQ_3$ solutions at $t=30$ are depicted using two artificial viscosity coefficients, the left panel is anisotropic, and the right panel is isotropic, $N_x\times N_v = 151\times901$.}
  \label{fg:B3_min_fh}
\end{figure}

\begin{table}[htbp]
    \caption{Bump-on-tail instability: maximum deviations of total mass until $t=400$. The element is $\polQ_3$ and we choose three grids of increasing density.} \label{tab:mass}
  \begin{center}
    \begin{tabular}{c|c} \hline
     $\#{\rm elements}$ & deviation  \\ \hline
     $32\times64$ & 4.44E-15 \\
     $64\times128$ & 1.15E-14\\
     $128\times256$ &2.91E-14\\ \hline
    \end{tabular}
  \end{center}
\end{table}

\section{Discussion}\label{sec:dis}
In this article, we present a framework for finite element methods to solve the Vlasov–Poisson equations. The discretization process consists of the following steps. First, we obtain a semi-discretization of the Vlasov equation using the continuous finite element method, which achieves high convergence rates with high-order polynomial basis functions and preserves the total mass of the Vlasov-Poisson system. To handle numerical oscillations, we introduce an anisotropic residual-based artificial viscosity scheme that remains mass-conservative. Second, the system is fully discretized using a fourth-order explicit strong-stability-preserving Runge-Kutta scheme, with the electric field updated at each Runge-Kutta stage, and the Poisson equation solved using continuous finite element methods.

The convergence study shows that the finite element method achieves fourth-order accuracy for smooth problems when using third-order Lagrange polynomials as the basis functions. The residual viscosity (RV) method approximates the Vlasov-Poisson equation without introducing numerical oscillations. Additionally, the residual-based artificial viscosity technique preserves the high accuracy of the standard finite element method. The methods are validated using classic benchmark problems of the Vlasov-Poisson equations in two dimensions. For Landau damping, the numerical solutions produced by our methods accurately predict the damping rate, and the artificial viscosity effectively handles the strong shocks in the solutions. Furthermore, the methods successfully simulate the plasma distribution in the two-stream instability and bump-on-tail instability phenomena. We also confirm that our method strictly conserves mass. However, other properties, such as energy, are not conserved by our methods, although the deviations in these quantities decrease as the mesh is refined.

There are several promising avenues for future research to expand on this work. First, the implementation of our methods in higher-dimensional settings requires further investigation. The main challenges involve storage and computation, as the data and computational cost grow exponentially in higher dimensions. A possible approach is to combine our methods with matrix-free algorithms, such as those introduced in \cite{MR4573597}, to reduce storage requirements and accelerate computations. Second, our methods do not preserve positivity, which is an important property for plasma distributions. Finally, the methods introduced in this article could be extended to the Vlasov-Maxwell equations, which are a more complex variant of the Vlasov-Poisson system. This model couples the Vlasov equation with the Maxwell equations, introducing additional complexity. One potential approach is to combine the continuous finite element function space from our methods with a structure-preserving space for the Maxwell equations.

\section*{CRediT authorship contribution statement}
\pmb{Junjie Wen}: Writing-original draft, Writing–review $\&$ editing, Visualization, Validation, Software, Methodology, Formal
analysis, Investigation. \pmb{Murtazo Nazarov}: Conceptualization, Formal analysis, Funding acquisition, Investigation, Methodology, Software, Supervision,  Writing–review $\&$ editing. 

\section*{Declaration of competing interest}
The authors declare the following financial interests/personal relationships which may be considered as potential competing
interests: Murtazo Nazarov reports financial support was provided by Swedish Research Council (VR). If there are other authors, they
declare that they have no known competing financial interests or personal relationships that could have appeared to influence the
work reported in this paper.

\section*{Acknowledgements}
The first and second authors are financially supported by the Swedish Research Council (VR) under grant numbers 2021-04620 and 2021-05095, which is gratefully acknowledged. Open access funding is provided by Uppsala University.

\appendix
\section{Guiding-center model}\label{Ap:GCM}
The guiding-center model is a variant of Vlasov-Poisson equations. This model describes the density evolution of a highly magnetized plasma in the transverse plane of a
tokamak. In this system, the charge density is governed by the following advection equation
  \begin{equation}
    \partial_t\rho(\pmb{x},t)+\pmb{E}^{\perp}(\pmb{x},t)\cdot\nabla_{\pmb{x}}\rho(\pmb{x},t)= 0,\notag
  \end{equation}
where the advection field is $\pmb{E}^{\perp} = (E_2(x_1,x_2),-E_1(x_1,x_2))^\mathsf{T}$, when we consider the system in two-dimensional setting. As in the Vlasov-Poisson equation, the electric field is determined by the charge density via the Poisson equation:
\begin{equation}
  -\nabla^2_{\pmb{x}}\Phi=\rho,\qquad\pmb{E} = -\nabla_{\pmb{x}}\Phi.\notag
\end{equation}

This system is similar to the Vlasov-Poisson equation, but unlike Vlasov-Poisson, all components of the advection field depend on all spatial coordinates of the phase space. 
Hence, the splitting technique commonly used for Vlasov-Poisson equations is difficult to apply in this case, as introduced in \cite{MR4844786, MR2586230, MR1672731}. Our methods do not rely on any splitting process, making this an suitable test case for their applicability.
We test our methods use the following initial data
\begin{equation}
  \rho(x_1,x_2,0)= \left\{
    \begin{aligned}
      &(1+\alpha\cos(\ell\theta))\exp(-4(r-6.5)^2),\quad&&{\rm if}\ r^-\leq r\leq r^+, \\
      &0, &&{\rm otherwise,}
    \end{aligned}
  \right.\notag
\end{equation}
where $\alpha=0.1$, $\ell = 6$, $\theta = {\rm atan}2(x_2,x_1)$, $r^+=8$, $r^-=5$, and $r = \sqrt{x_1^2+x_2^2}$. The domain is $\Omega_{\pmb{x}}:=[-15,15]\times[-15,15]$, and we impose periodic boundary conditions on $\rho$ and $\Phi$.
The $\polQ_3$ solutions in the mesh consisting of $128\times 128$ elements are plotted in Figure \ref{fig:GCM}. We observe the presence of vortices that gradually merge, similar to the findings reported in \cite{MR4844786, MR3517454}.

\begin{figure}[htbp]
  \centering
  \begin{subfigure}[b]{0.33\textwidth}
    \includegraphics[width=\linewidth]{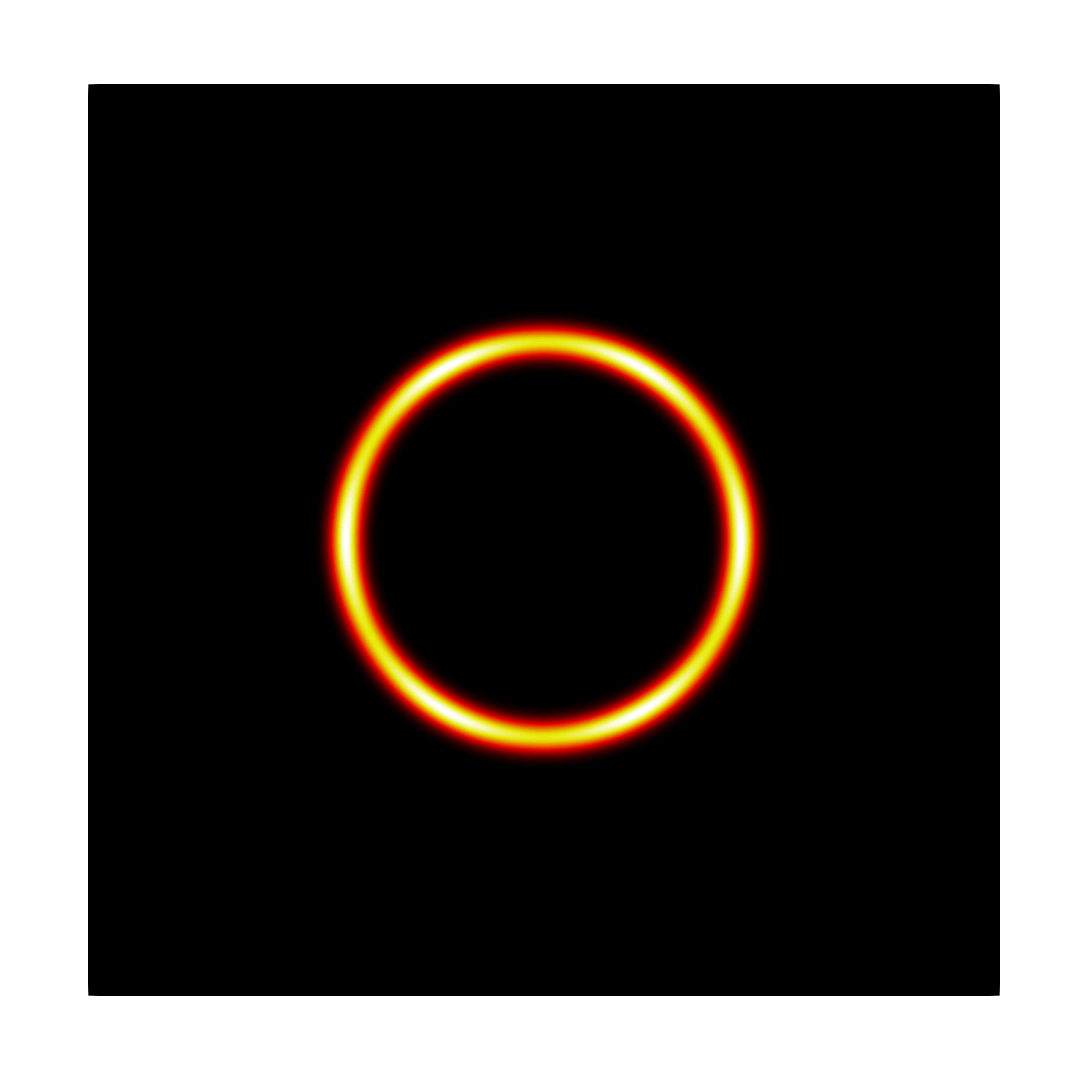}
    \caption{$t=0$}
  \end{subfigure}\hfill
  \begin{subfigure}[b]{0.33\textwidth}
    \includegraphics[width=\linewidth]{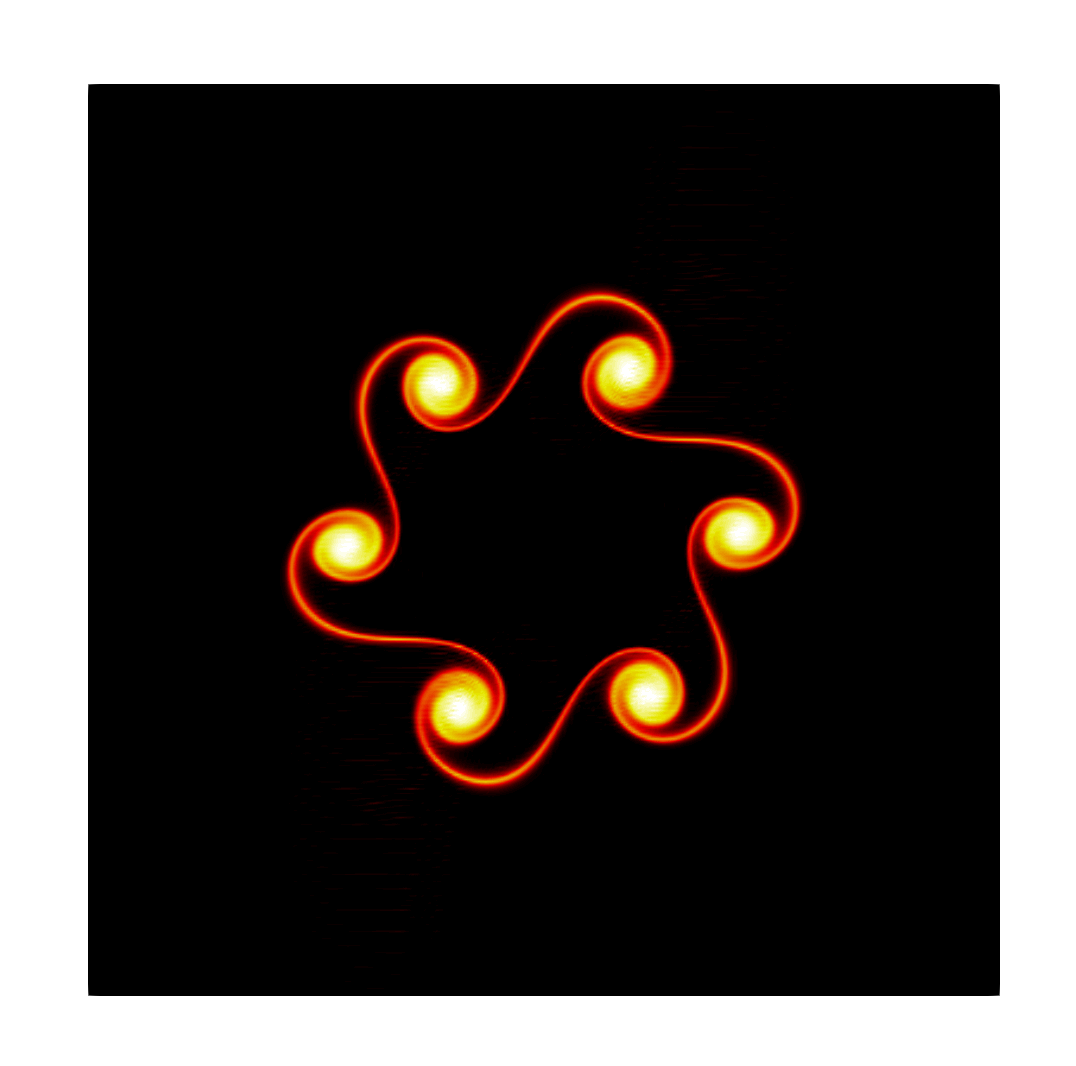}
    \caption{$t=25$}
  \end{subfigure}\hfill
  \begin{subfigure}[b]{0.33\textwidth}
    \includegraphics[width=\linewidth]{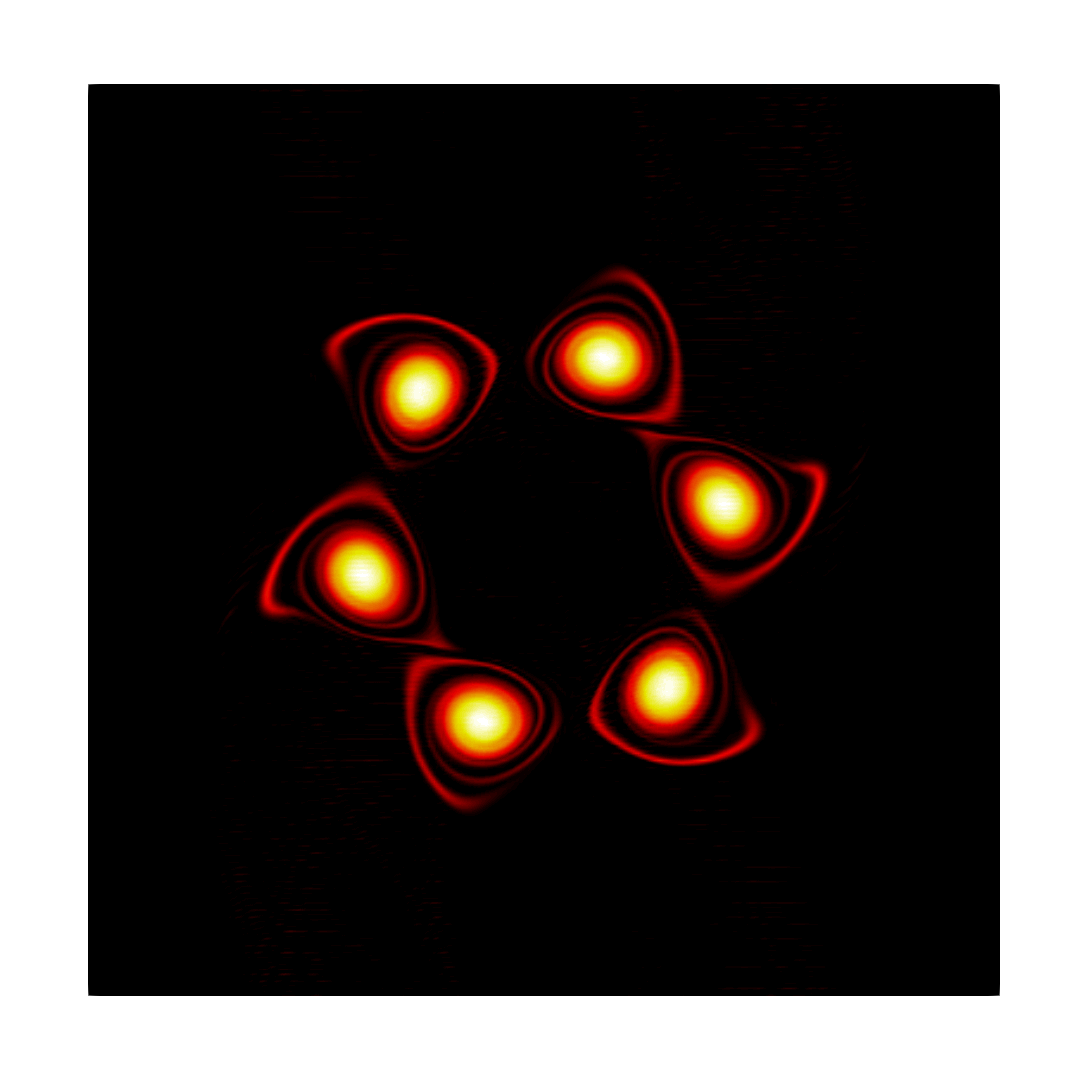}
    \caption{$t=50$}
  \end{subfigure}\hfill
  \centering
  \begin{subfigure}[b]{0.33\textwidth}
    \includegraphics[width=\linewidth]{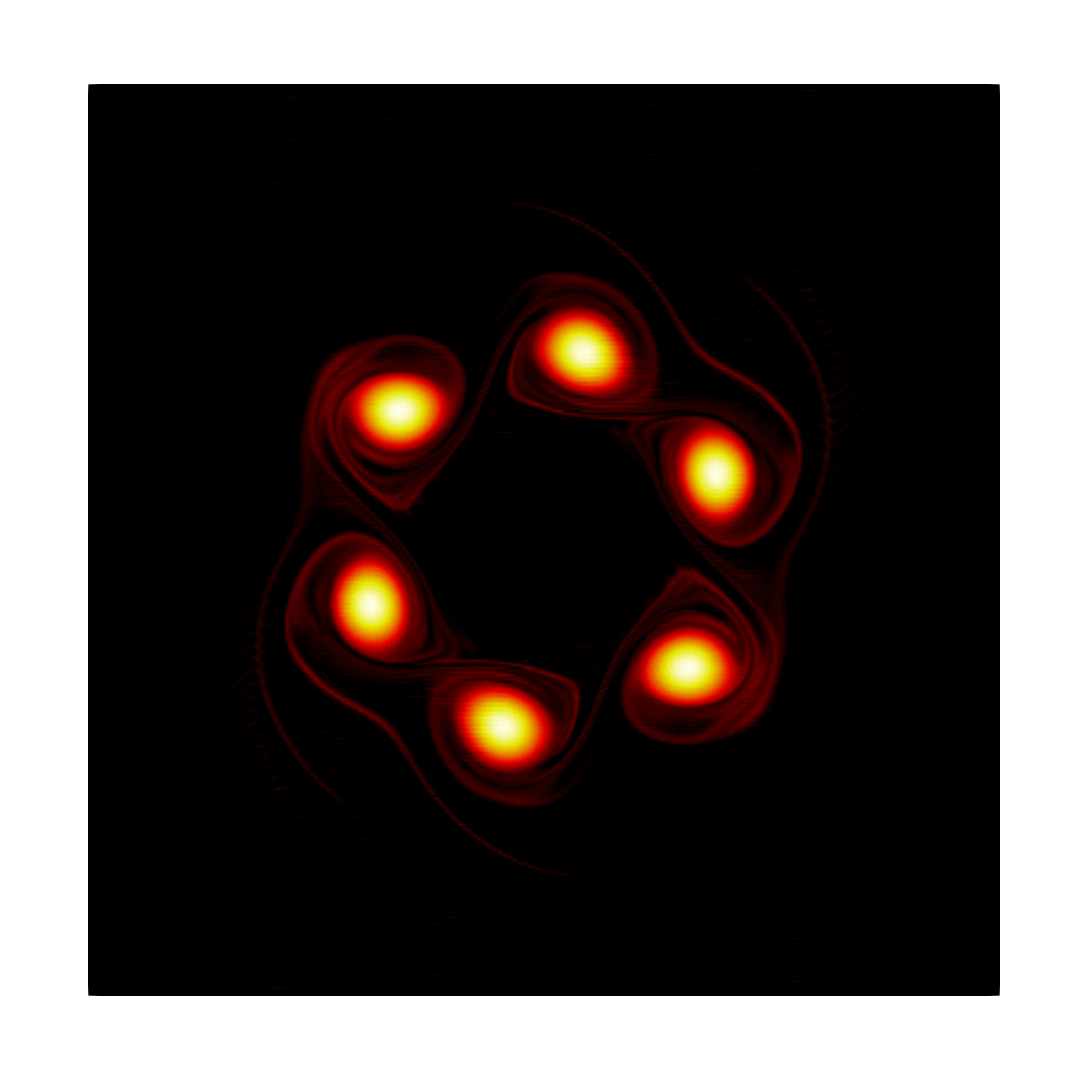}
    \caption{$t=100$}
  \end{subfigure}\hfill
  \begin{subfigure}[b]{0.33\textwidth}
    \includegraphics[width=\linewidth]{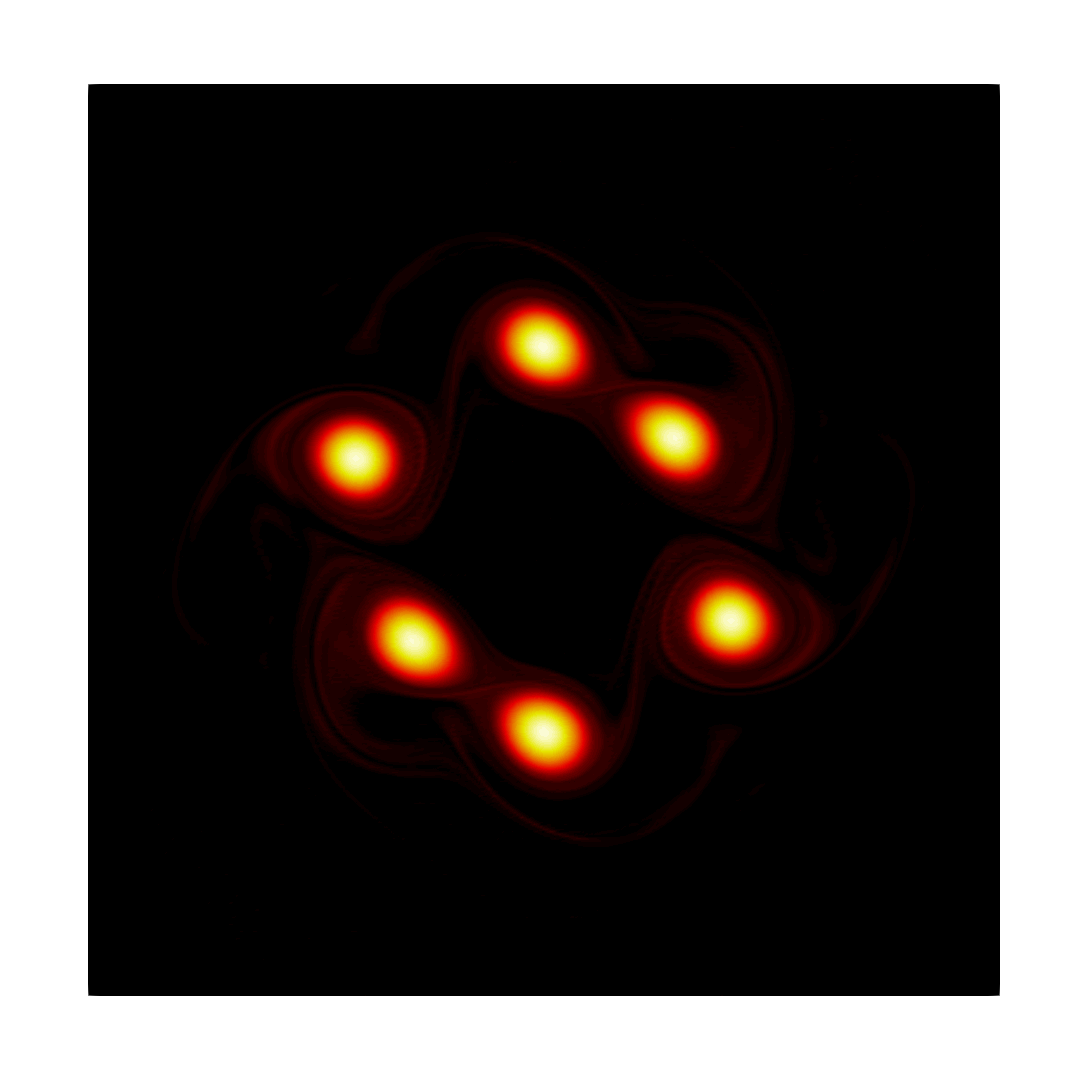}
    \caption{$t=200$}
  \end{subfigure}\hfill
  \begin{subfigure}[b]{0.33\textwidth}
    \includegraphics[width=\linewidth]{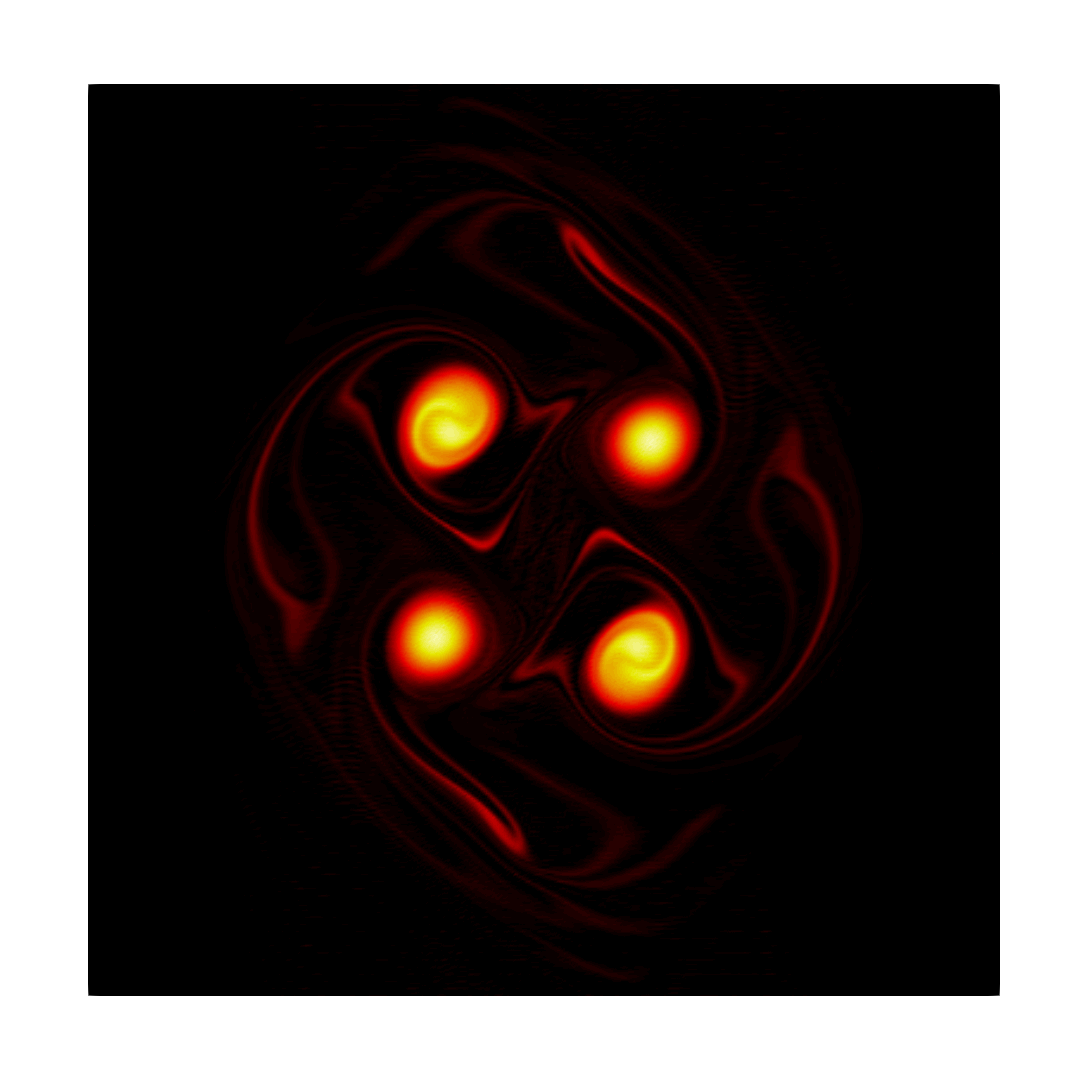}
    \caption{$t=500$}
  \end{subfigure}\hfill
  \caption{Guiding-center model: $\polQ_3$ solutions at $t=0, 25, 50, 100, 200,$ and $500$. The functions are plotted at the nodal points, $N_{x_1}\times N_{x_2} = 385\times385$.}
  \label{fig:GCM}
\end{figure}

\bibliographystyle{bib}
\bibliography{ref}

\end{document}